\definecolor{applegreen}{rgb}{0.55, 0.71, 0.0}
\newcommand{\bfi}{\boldsymbol{i}}
\newcommand{\bfj}{\boldsymbol{j}}
\newcommand{\bfJ}{\boldsymbol{J}}
\numberwithin{equation}{section}
\newtheorem{theo}{Theorem}[section]
\newtheorem{cor}[theo]{Corollary}
\newtheorem{lemma}[theo]{Lemma}
\newtheorem{example}[theo]{Example}
\newtheorem{assum}[theo]{Assumption}
\newtheorem{remark}[theo]{Remark}
\newenvironment{proof}[1][Proof]{\textbf{#1.} }{\ \rule{0.5em}{0.5em}}
\newcommand{\var}{{\rm Var} \mspace{1mu}}
\begin{document}

\begin{frontmatter}
\title{Importance sampling for maxima on trees
%\thanksref{T1}
}
\runtitle{Importance sampling for maxima on trees}
%\thankstext{T1}{Footnote to the title with the ``thankstext'' command.}

\begin{aug}
\author{\fnms{Bojan} \snm{Basrak}\thanksref{t1}\ead[label=e1]{bbasrak@math.hr}},
\author{\fnms{Michael} \snm{Conroy}\thanksref{t2}
\ead[label=e2]{mconroy@live.unc.edu}},
\author{\fnms{Mariana} \snm{Olvera-Cravioto}\thanksref{t2,t3}\ead[label=e3]{molvera@email.unc.edu}} \\
\and
\author{\fnms{Zbigniew} \snm{Palmowski}\thanksref{t4}
\ead[label=e4]{zbigniew.palmowski@pwr.edu.pl}
}

%\thankstext{t1}{First}
\thankstext{t1}{This work is in part financed by Croatian Science Foundation and the Swiss National Science Foundation -- grant CSRP 018-01-180549.}
\thankstext{t3}{This work is partially supported by NSF Grant No. NSF CMMI-1537638.}
\thankstext{t4}{This work is partially supported by Polish National Science Centre Grant No. 2018/29/B/ST1/00756, 2019-2022.}
\runauthor{B. Basrak et al.}

\affiliation{University of Zagreb\thanksmark{t1}, University of North Carolina\thanksmark{t3} and Wroc\l aw University of Science and Technology\thanksmark{t4}}

\address{
\thanksmark{t1}Department of Mathematics, Faculty of Science\\
University of Zagreb\\
\printead{e1}
}

\address{
\thanksmark{t2}Department of Statistics and Operations Research\\
University of North Carolina at Chapel Hill\\
\printead{e2}\\
\printead*{e3}
}

\address{\thanksmark{t4}Faculty of Pure and Applied Mathematics, Hugo Steinhaus Center\\
Wroc\l aw University of Science and Technology\\
\printead{e4}
}
\end{aug}

\begin{abstract}
We consider
the distributional fixed-point equation:
\[R \stackrel{\mathcal{D}}{=} Q \vee \left( \bigvee_{i=1}^N C_i R_i \right),
\]
where the $\{R_i\}$ are i.i.d.~copies of $R$, independent of the vector $(Q, N, \{C_i\})$, where $N \in \mathbb{N}$, $Q, \{C_i\} \geq 0$ and $P(Q > 0) > 0$.
By setting $W = \log R$, $X_i = \log C_i$, $Y = \log Q$ it is equivalent to the high-order Lindley equation
\[W \stackrel{\mathcal{D}}{=} \max\left\{ Y, \, \max_{1 \leq i \leq N} (X_i + W_i) \right\}.\]
It is known that under Kesten assumptions, \[P(W > t) \sim H e^{-\alpha t}, \qquad t \to \infty,\]
where $\alpha>0$ solves the Cram\'er-Lundberg equation $E \left[  \sum_{j=1}^N C_i ^\alpha \right] = E\left[ \sum_{i=1}^N e^{\alpha X_i} \right] = 1$.
The main goal of this paper is to provide an explicit representation for $P(W > t)$, which can be directly connected to the underlying weighted branching process where $W$ is constructed and that can be used to construct unbiased and strongly efficient estimators for all $t$. Furthermore, we show how this new representation can be directly analyzed using Alsmeyer's Markov renewal theorem, yielding an alternative representation for the constant $H$. We provide numerical examples illustrating the use of this new algorithm.
\end{abstract}

\begin{keyword}[class=MSC]
\kwd[Primary ]{05C80}
\kwd{60J80}
\kwd[; secondary ]{68P20} \kwd{41A60}
\kwd{60B10} \kwd{60K35}
\end{keyword}

\begin{keyword}
\kwd{high-order Lindley equation}
\kwd{branching random walk}
\kwd{importance sampling simulation}
\kwd{weighted branching processes}
\kwd{distributional fixed-point equations}
\kwd{change of measure}
\kwd{power laws}
\end{keyword}

\end{frontmatter}

\section{Introduction}

The distributional fixed-point equation:
\begin{equation} \label{eq:HighOrderLindley}
W \stackrel{\mathcal{D}}{=} \max\left\{ Y, \, \max_{1 \leq i \leq N} (X_i + W_i) \right\},
\end{equation}
where the $\{W_i\}$ are i.i.d.~copies of $W$, independent of the vector $(Y, N, \{X_i\})$, with $N \in \mathbb{N}$, is known in the literature as the high-order Lindley equation \cite{Biggins_98,Jel_Olv_15,Kar_Kel_Suh_94, Olv_Rui_19}. The special case of $N \equiv 1$ and $Y \equiv 0$, known as the Lindley equation,
\begin{equation} \label{eq:Lindley}
W \stackrel{\mathcal{D}}{=} \max\left\{ 0, X + W \right\},
\end{equation}
 is perhaps one of the best studied recursions in applied probability, since it describes the stationary distribution of the waiting time in a single-server queue fed by a renewal process and having i.i.d.~service times; see  Asmussen~\cite{Asm2003} and Cohen~\cite{cohen-SSQ} for a comprehensive overview. If we replace the zero in \eqref{eq:Lindley} with a random $Y$ we obtain a recursion satisfied by the all-time supremum of a ``perturbed" random walk, where the $Y$ denotes the perturbation.
 This type of distributional recursion was analyzed, for example, in \cite{Araman, Grey, Iksanov}.
 The branching form \eqref{eq:HighOrderLindley} appears in the study of queueing networks with synchronization requirements \cite{Kar_Kel_Suh_94,Olv_Rui_19} and in the analysis of the maximum displacement of a branching random walk \cite{Biggins_98}.

Although Lindley's equation has a unique solution whenever $E[X] < 0$, there is no uniqueness in the branching case, as shown in \cite{Kar_Kel_Suh_94}. As the work in \cite{Biggins_98} shows, the solutions to \eqref{eq:HighOrderLindley} can be constructed using one special solution, known as the endogenous solution \cite{Aldo_Band_05}. The endogenous solution can be explicitly constructed on a structure known as a weighted branching process \cite{Biggins_77,Rosler_93, Jel_Olv_12a}, and other solutions can be obtained by adding different ``terminal'' values to the leaves of a finite tree (see \cite{Biggins_98} and Section~\ref{S.Changeofmeasure} for more details). From an applications point of view (e.g., the models in \cite{Kar_Kel_Suh_94, Olv_Rui_19}), it is usually the special endogenous solution that is of interest.
We recall that if $W = \log R$, $X_i = \log C_i$, $Y = \log Q$, equation \eqref{eq:HighOrderLindley} is related with the random extremal equation
\begin{equation}\label{multeq}
R \stackrel{\mathcal{D}}{=} Q \vee \left( \bigvee_{i=1}^N C_i R_i \right),
\end{equation}
where the $\{R_i\}$ are i.i.d.~copies of $R$, independent of the vector $(Q, N, \{C_i\})$, where $N \in \mathbb{N}\cup \{\infty\}$, $Q, \{C_i\} \geq 0$ and $P(Q > 0) > 0$; throughout the paper we use the notation $x \vee y = \max\{x, y\}$. These types of distributional equations and their simulation have received considerable attention in the recent literature, although most of it has centered around the affine version of the equation studied here; see
\cite{Chen_Lit_Olv_17, Collamore1, Collamore2, Enriquez, Goldie_91,  Guiv, Jel_Olv_12a, Jel_Olv_12b, Jel_Olv_15}.
We refer to the overview on this topic given in \cite{Darek, Iksanov}.

Given both the theoretical and practical importance of the special endogenous solution to \eqref{eq:HighOrderLindley}, the focus of this paper is the study of its asymptotic tail behavior, i.e., $P(W > t)$ for large $t$. The study of this tail distribution in the case of the single-server queue is part of the classical queueing theory literature (see, e.g., Chapter X in \cite{Asm2003}), and it includes both the case when $X$ in \eqref{eq:Lindley} has finite exponential moments and when it is heavy-tailed. Of particular interest to our present work is the Cram\'er-Lundberg asymptotic (see Theorem~5.2 in Chapter XIII of \cite{Asm2003}), which states that
\begin{equation}\label{eq:CramLundAsymptotic}
P(W > t) \sim K e^{-\alpha t}, \qquad t \to \infty,
\end{equation}
where $\alpha > 0$ is the solution to $E[e^{\alpha X} ] = 1$ and $K = \widetilde{E}[ e^{-\alpha B(\infty)}]$ is a constant that can be computed in terms of the overshoot $B(x)$ of level $x$ of the underlying random walk $S_n = X_1 + \dots + X_n$ under a change of measure inducing the probability $\widetilde{P}$.

The corresponding exponential decay of the endogenous solution to \eqref{eq:HighOrderLindley} has been established in \cite{Jel_Olv_15} using implicit renewal theory \cite{Goldie_91,Jel_Olv_12a,Jel_Olv_12b}. Specifically, Theorem~3.4 in \cite{Jel_Olv_15} states that, provided there exists $\alpha > 0$ such that
\[E\left[ \sum_{i=1}^N e^{\alpha X_i} \right] = 1\quad \text{and}\quad 0 < E\left[ \sum_{i=1}^N e^{\alpha X_i} X_i \right] < \infty,\] then
$$P(W > t) \sim H e^{-\alpha t}, \qquad t \to \infty.$$
However, the constant $H$ provided by the theorem is implicitly defined in terms of the $\{W_i\}_{i \geq 1}$ themselves, making its interpretation even less obvious than in the non-branching case. Hence, the main goal of this paper is to provide an alternative representation for $P(W > t)$ yielding: 1) an unbiased and easy to simulate algorithm for $P(W > t)$ for all values of $t$, and 2) an alternative expression for $H$ that better reflects the behavior of the underlying weighted branching random walk leading to the event $\{W > t\}$.   The main tool enabling our first goal is a new interpretation of the measure $ E\left[ \sum_{i=1}^N 1(\log C_i \in dx)\right]$ appearing in the renewal theoretic approaches for establishing the existence of $H$ \cite{Darek, Iksanov_04, Liu_98, Liu_00, Biggins_98, Jel_Olv_12a, Jel_Olv_12b} in terms of a distinguished path, to which we will subsequently apply a change of measure. The second goal, that of obtaining an alternative representation for $H$, is attained by applying the Markov Renewal Theorem from \cite{Alsmeyer_94} to our newly derived representation. The new proposed simulation algorithm yields an unbiased and strongly efficient estimator for the probability $P(W > t)$, much in the spirit of the importance sampling approach provided by Siegmund's algorithm (see Section 2a, Chapter VI in \cite{asmussen2007stochastic}) for the Lindley equation \eqref{eq:Lindley}.
Importance samplers  were also constructed in \cite{Jose1, Collamore1} for the tail distribution of the solution of the affine equation ($N \equiv 1$), in \cite{Siegmund} in the context of sequential analysis, in \cite{GlynnIglehart} for
Markov chains and semi-Markov processes, in \cite{Collamore3} for Markov-modulated walks.
For general review on rare-event simulation we refer the reader to
\cite{Jose2, Bucklew}.

The change of measure we propose is of independent interest, since it differs from the typical one encountered in the weighted branching processes literature.  It is constructed along a random path $\{ {\bfJ}_r \}_{r \geq 0}$ of the underlying weighted branching process, which we refer to as the {\em spine}, and changes its drift while leaving all other paths unchanged.
What is even more interesting is that the likelihood martingale $L_n=\prod_{r=0}^{n-1} D_{{\bfJ}_r}$ used in our approach is  constructed as a product of certain random variables $D_{\bfi}$ along the spine, and it is substantially different from the seminal Biggins-Kyprianou martingale $W_n(\alpha)$ (see e.g. \cite{{Biggins_98}, Kyprianou_00}),
which is constructed along the `width' of the  tree; see Section~\ref{S.HighOrderLindley} for details. Finally, our new change of measure approach also provides important insights into the exponential asymptotics described by the implicit renewal theorem \cite{Goldie_91, Jel_Olv_15}.

The remainder of the paper is organized as follows. Section~\ref{S.WBP} gives a description of a general weighted branching process and Section~\ref{S.HighOrderLindley} explains how the special endogenous solution to \eqref{eq:HighOrderLindley} is constructed. Our main theorem is given in Section~\ref{S.MainResult}, and the importance sampling algorithm is discussed in Section~\ref{S.Simulation}.

\section{The weighted branching process} \label{S.WBP}

We adopt the notation from \cite{Jel_Olv_12b} to define a marked Galton-Watson process. To this end, let $\mathbb{N}_{+} = \{1, 2, 3, \dots\}$ be the set of positive integers and let $U = \bigcup_{k=0}^\infty (\mathbb{N}_+)^k$ be the set of all finite sequences ${\bfi} = (i_1, i_2, \dots, i_n)$, where by convention $\mathbb{N}_+^0 = \{ \emptyset\}$ contains the null sequence $\emptyset$. To ease the exposition, for a sequence ${\bfi} = (i_1, i_2, \dots, i_k) \in U$ we write ${\bfi}|n = (i_1, i_2, \dots, i_n)$, provided $k \geq n$, and  ${\bfi}|0 = \emptyset$ to denote the index truncation at level $n$, $n \geq 0$. Also, for ${\bfi} \in A_1$ we simply use the notation ${\bfi} = i_1$, skipping the parenthesis. Similarly, for ${\bfi} = (i_1, \dots, i_n)$ we will use $({\bfi}, j) = (i_1,\dots, i_n, j)$ to denote the index concatenation operation, and if ${\bfi} = \emptyset$, then write $({\bfi}, j) = j$. Let $|{\bfi}|$ be the length of index ${\bfi}$, i.e., $|{\bfi}| = k$ if ${\bfi} = (i_1, \dots, i_k) \in \mathbb{N}_+^k$. We order $U$ according to a length-lexicographic order $\prec$: ${\bfi} \prec {\bfj}$ if either $|{\bfi}| < |{\bfj}|$, or $|{\bfi}| = |{\bfj}|$ and $i_r = j_r$ for $r = 1, \dots, t-1$, and $i_t < j_t$ for some $t \leq |{\bfi}|$.

To iteratively construct the weighted branching tree $\mathcal{T}$, let $\left\{\bm{\psi}_{\bfi} \right\}_{{\bfi} \in U}$ denote a sequence of i.i.d. random elements in
$\mathbb{N} \times \mathbb{R}^\infty $, where
$\bm{\psi}_{\bfi}= (N_{\bfi}, Q_{\bfi}, C_{({\bfi},1)}, C_{({\bfi}, 2)}, \dots)$.
For simplicity we denote
 $\bm{\psi}= (N,Q, C_{1}, C_{2}, \dots)=\bm{\psi}_\emptyset$ to represent a generic element of the sequence $\{\bm{\psi}_{\bfi}\}$. The random integers $\{ N_{\bfi} \}_{{\bfi} \in U}$ herein define the structure of the tree as follows. Let $A_0 = \left\{ \emptyset \right\}$,
\begin{align}
A_1 &= \{ i \in \mathbb{N}: 1 \leq i \leq N_\emptyset \}, \quad \text{and} \notag \\
A_n &= \{ ({\bfi}, i_n) \in U:  {\bfi} \in A_{n-1}, 1 \leq i_n \leq N_{\bfi} \}, \quad n \geq 2, \label{eq:AnDef}
\end{align}
be the set of individuals in the $n$th generation. Thus to each node ${\bfi}$ in the tree
different from the root we assign the weight $C_{\bfi}$, and a cumulative weight $\Pi_{\bfi}$  computed according to
$$\Pi_{i_1} = C_{i_1}, \qquad \Pi_{(i_1, \dots, i_n)} = C_{(i_1, \dots, i_n)} \Pi_{(i_1, \dots, i_{n-1})}, \qquad n \geq 2, $$
where $\Pi = \Pi_\emptyset \equiv 1$ is the cumulative weight of the root node.
See Figure \ref{F.Tree}.

%\begin{center}
\begin{figure}[t]
\centering
\begin{picture}(330,110)(10,0)
\put(0,0){\includegraphics[scale = 0.75, bb = 30 560 510 695, clip]{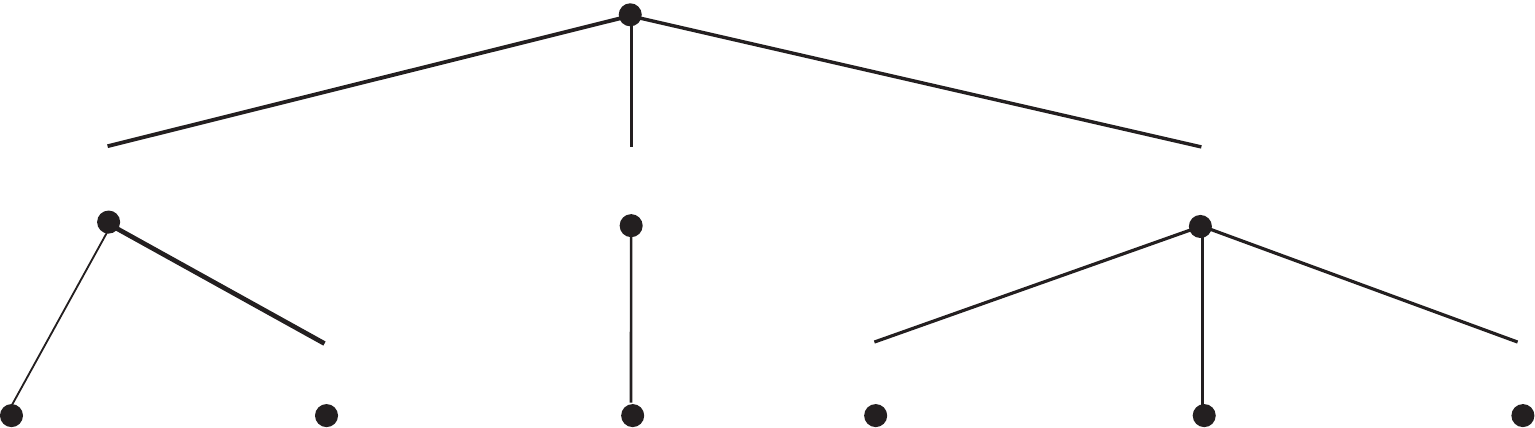}}
\put(27,8){\includegraphics[scale = 0.75]{TreeW}}
\put(150,105){\footnotesize $\Pi = 1$}
\put(30,59){\footnotesize $\Pi_{1} = C_1$}
\put(145,59){\footnotesize $\Pi_{2} = C_2$}
\put(270,59){\footnotesize $\Pi_{3} = C_3$}
\put(0,0){\footnotesize $\Pi_{(1,1)} = C_{(1,1)} C_1$}
\put(62,17){\footnotesize $\Pi_{(1,2)} = C_{(1,2)} C_1$}
\put(129,0){\footnotesize $\Pi_{(2,1)} = C_{(2,1)} C_2$}
\put(182,17){\footnotesize $\Pi_{(3,1)} = C_{(3,1)} C_3$}
\put(252,0){\footnotesize $\Pi_{(3,2)} = C_{(3,2)} C_3$}
\put(318,17){\footnotesize $\Pi_{(3,3)} = C_{(3,3)} C_3$}
\end{picture}
\caption{Weighted branching tree}\label{F.Tree}
\end{figure}
%\end{center}

\subsection{The high-order Lindley equation} \label{S.HighOrderLindley}

Consider now the distributional fixed-point equation:
\begin{equation} \label{eq:Maximum}
R \stackrel{\mathcal{D}}{=} Q \vee \left( \bigvee_{i=1}^N C_i R_i \right) ,
\end{equation}
where the $\{R_i\}$ are i.i.d.~copies of $R$, independent of the vector $(Q, N, \{C_i\})$, where $N \in \mathbb{N}$, $Q, \{C_i\} \geq 0$ and $P(Q > 0) > 0$. Recall that by setting $W = \log R$, $X_i = \log C_i$, $Y = \log Q$ we obtain the high-order Lindley equation
$$W \stackrel{\mathcal{D}}{=} \max\left\{ Y, \, \max_{1 \leq i \leq N} (X_i + W_i) \right\}.$$

The random variable
\begin{equation}\label{eq:Endogenous}
R := \bigvee_{{\bfi} \in \mathcal{T}} \Pi_{\bfi} Q_{\bfi}
\end{equation}
is known as the special endogenous solution to \eqref{eq:Maximum}. As mentioned earlier, the high-order Lindley equation has in general multiple solutions, but before we discuss those it is convenient to focus first on the so-called {\em regular} case, which corresponds to the existence of a unique $\alpha > 0$ satisfying $E\left[ \sum_{j=1}^N C_j^\alpha \right] = 1$ and $E\left[ \sum_{j=1}^N C_j^\alpha \log C_j \right] \in (0, \infty)$ (see \cite{Biggins_98}). As the work in \cite{Biggins_98} shows, other solutions to \eqref{eq:Maximum} can be constructed by using ``terminal" values.  More precisely, consider the finite tree $\mathcal{T}^{(n)} = \{ {\bfi} \in \mathcal{T}: |{\bfi}| \leq n\}$, and construct the random variable
$$R_n(B) = \left( \bigvee_{{\bfi} \in \mathcal{T}^{(n-1)}} \Pi_{\bfi} Q_{\bfi} \right) \vee \left( \bigvee_{{\bfi} \in A_n} \Pi_{\bfi} B_{\bfi} \right),$$
where the $\{ B_{\bfi} \}$ are i.i.d.~nonnegative random variables, independent of all other branching vectors in $\mathcal{T}^{(n-1)}$. Then, provided
$$\lim_{x \to \infty} x^\alpha P(B > x) = \gamma \geq 0,$$
the random variable $R(B) = \lim_{n \to \infty} R_n(B)$ is a solution to \eqref{eq:Maximum} (see Theorem~1(ii) in \cite{Biggins_98}). Note that the special endogenous solution $R$ given by \eqref{eq:Endogenous} corresponds to taking the terminal values $\{B_{\bfi}\}$ identically equal to zero, and is known to be the minimal solution in the usual stochastic order sense (see Proposition~5 in \cite{Biggins_98}). Moreover, by Theorem~1(i) in \cite{Biggins_98}, $R$ is finite a.s.~whenever
$$\sup_{x \geq 1} x^\alpha (\log x)^{1+\epsilon} P(Q > x) < \infty$$
for some $\epsilon > 0$.

Besides observing that in applications \cite{Kar_Kel_Suh_94, Olv_Rui_19} it is usually the special endogenous solution that is of interest, it is worth mentioning that it plays an important role in characterizing all the solutions defined through $R(B)$, whose distributions are given by
\begin{equation} \label{eq:GeneralSolution}
P( R(B) \leq x) = E\left[ 1(R \leq x) \exp(-\gamma W(\alpha) x^{-\alpha}) \right],
\end{equation}
where $W(\alpha)$ is the a.s.~limit of the martingale $W_n(\alpha) := \sum_{{\bfi} \in A_n} \Pi_{\bfi}^\alpha$ (see Theorem~1(ii) in \cite{Biggins_98}). The martingale $\{ W_n(\theta): n \geq 1\}$ defined via $W_n(\theta) := \rho_\theta^{-n}  \sum_{{\bfi} \in A_n} \Pi_{\bfi}^\theta$, where $\rho_\theta := E\left[ \sum_{j=1}^N C_j^\theta \right]$, is known as the Biggins-Kyprianou martingale \cite{Biggins_77, Kyprianou_00}, and it plays an important role in much of the weighted branching processes literature.
Moreover, under additional technical conditions, all the solutions to \eqref{eq:Maximum} can be characterized through \eqref{eq:GeneralSolution} (see Theorem~1(iii) in \cite{Biggins_98}).

\begin{example} \upshape
To illustrate the multiplicity of solutions to \eqref{eq:Maximum}, consider the case when $N \equiv 2$, $C_i \equiv \frac{1}{2}$ for $i=1,2$ and $Q \equiv \frac{1}{2}$, whose endogenous solution is given by $R= \bigvee_{{\bfi} \in \mathcal{T}} \Pi_{\bfi} Q_{\bfi} = \frac{1}{2}$. Now note that if $R' = (T \vee 1)/2$ where $T$ has a Frechet distribution with shape/scale parameters $(1,s)$, i.e., $P(T \leq x) = e^{-s/x}$ 
for $x > 0$, then $R'$ is a (non-endogenous) solution since
$$Q  \vee \bigvee_{i=1}^N C_i  R_i' = \frac{1}{2} \vee \bigvee_{i=1}^2 \frac{1}{2} \cdot \frac{(T_i \vee 1)}{2}  = \frac{1}{2} \max\left\{ 1, \frac{T_1 \vee T_2}{2} \right\} \stackrel{\mathcal{D}}{=} \frac{1}{2} ( 1 \vee T) = R'.$$
Furthermore, by setting $B = T/2$ we can identify $R'$ with
$$R(B) = \lim_{n \to \infty} R_n(B) = \lim_{n \to \infty} \frac{1}{2} \vee \left( \bigvee_{{\bfi} \in A_n} \frac{B_{\bfi}}{2^n} \right) =  \lim_{n \to \infty} \frac{1}{2} \max \left\{ 1 , \bigvee_{{\bfi} \in A_n} \frac{T_{\bfi}}{2^n}  \right\} \stackrel{\mathcal{D}}{=} \frac{1}{2} (1 \vee T) = R'.$$
\end{example}

Our analysis of $R$ will rely on a set of assumptions satisfied by the generic branching vector $\bm{\psi} = (N, Q, C_1, C_2, \dots)$. 

\begin{assum} \label{Ass1}
$(N, Q, C_1, C_2, \ldots)$ is nonnegative a.s. with $N \in \mathbb{N}_+ \cup \{\infty\}$, and $P(Q > 0) > 0$.
Furthermore, for some $\alpha>0$,
\begin{enumerate}[(a)]
\item $E \left[  \sum_{i=1}^N C_i ^\alpha \right] = 1$ and $E\left[ \sum_{i=1}^N C_i^\alpha \log C_i \right] \in (0, \infty)$,

\item $E\left[ \sum_{i=1}^N C_i^\beta \right] < 1$ for some $0 < \beta < \alpha$ and $E[Q^\alpha] < \infty$,

\item $P\left( \sum_{i=1}^N C_i^\alpha = 0 \right) = 0$,

\item The probability measure $\eta(dx) = E\left[ \sum_{i=1}^N C_i^\alpha 1(\log C_i \in dx) \right]$ is non-arithmetic,

\item $E\left[\left(\sum_{i=1}^N C_i^\alpha\right) \log^+\left(Q^\alpha \vee \sum_{i=1}^N C_i^\alpha \right)\right] < \infty$.

\end{enumerate}
\end{assum}

Since the approach followed here is different from the one used in the implicit renewal theorem found in \cite{Jel_Olv_15}, our assumptions for establishing the representation of the constant in Theorem~\ref{T.Main} are slightly different. In particular, conditions (c) and (e) are new. Condition (c) will be needed to ensure that our change of measure is well-defined, and condition (e) will guarantee that the positive part of the perturbed branching random walk  has finite mean under said change of measure. On the other hand, the implicit renewal theorem requires the following assumption, which we use only for the positivity of the constant in Theorem~\ref{T.Main}. 

\begin{assum}\label{Ass2}

$E\left[ \left( \sum_{i=1}^N C_i \right)^\alpha \right] < \infty$ if $\alpha > 1$ and $E\left[ \left( \sum_{i=1}^N C_i^{\alpha/(1+\epsilon)} \right)^{1+\epsilon} \right] < \infty$ for some $0 < \epsilon < 1$ if $0 < \alpha \leq 1$.

\end{assum}

Observe that apart from these assumptions, the dependence structure in the vector $\bm{\psi}$ is  completely arbitrary.

\subsection{Change of measure along a path}\label{S.Changeofmeasure}

Although the weighted branching process is more naturally defined in terms of the weights $\Pi_{\bfi}$ along the branches of $\mathcal{T}$, it will be more convenient from this point onwards to focus on the additive version of \eqref{eq:Maximum}. Note that for any path ${\bfi}$ originating at the root of $\mathcal{T}$, we can define a random walk by setting $S_{\bfi} := \log \Pi_{\bfi}$. Moreover, by letting $Y_{\bfi} = \log Q_{\bfi}$, we obtain that
\begin{equation}\label{eq:LogEndogenous}
W := \log R = \bigvee_{{\bfi} \in \mathcal{T}} (S_{\bfi} + Y_{\bfi})
\end{equation}
represents the maximum of a perturbed branching random walk.

Since our goal is to analyze the tail distribution $P(W > t)$ (equivalently, of $P(R > t)$) for all values of $t$, the key idea of our analysis is to apply a change of measure to the perturbed branching random walk under which the event $\{ W > t \}$ for large $t$ is no longer rare. This is exactly the usual approach for studying the maximum of the standard random walk under Cram\'er conditions (i.e., the existence of $\alpha > 0$ such that $E[e^{\alpha X}] = 1$ and $0 < E[X e^{\alpha X}] < \infty$). However, in the branching case the change of measure is not as straightforward as in the non-branching case, where we use the exponential martingale to define it (see Chapter X in \cite{Asm2003}).

Note that under the condition  $E\left[ \sum_{i=1}^N C_i^\beta \right] < 1$ for some $\beta > 0$, the paths in the tree $\mathcal{T}$ have negative drift\footnote{Since Jensen's inequality gives $E\left[ \max_{1 \leq i \leq N} X_i \right] = \beta^{-1} E\left[ \log \left( \bigvee_{i=1}^N C_i^\beta \right) \right] \leq \beta^{-1} \log E\left[ \sum_{i=1}^N C_i^\beta \right] < 0$.}. The change of measure we seek is obtained by making the drift of one path positive.
Starting at the root, we will pick this {\em chosen path} by selecting one of its offspring at random, with a probability proportional to its weight raised to the $\alpha$ power. This procedure will allow us to define a suitable mean one nonnegative martingale to induce a change of measure on the entire tree. As we will show later, the change in the drift will not affect any subtrees whose roots are not part of the chosen path, allowing us to isolate the (small) set of paths responsible for the rare event $\{W > t\}$.

More precisely, let ${\bfJ}_0 = \emptyset$ denote the root of $\mathcal{T}$. We now recursively define the random indices
along the chosen path, $\{ {\bfJ}_k: k \geq 1 \}$, as follows:
$$P\left(  {\bfJ}_k = ({\bfJ}_{k-1}, i) \left| \bm{\psi}_{{\bfJ}_{k-1}} \right. \right) = \frac{C_{({\bfJ}_{k-1}, i)}^\alpha}{D_{{\bfJ}_{k-1}}}, \qquad 1 \leq i \leq N_{{\bfJ}_{k-1}}, \quad k \geq 1,$$
where $D_{\bfi} = \sum_{r=1}^{N_{\bfi}} C_{({\bfi},r)}^\alpha$ for any ${\bfi} \in U$, with generic copy $D$. 
 From now on, we will refer to this chosen path along with its offspring and sibling nodes as the {\em spine}. Note that the sequence of indexes $\{ {\bfJ}_k: k \geq 0 \}$ identifies all the nodes in the spine, with node ${\bfJ}_k$ denoting the one in the $k$th generation of $\mathcal{T}$.

We now use the spine to define a mean one nonnegative martingale for our change of measure. To this end, define
$$
 L_0 = 1, \qquad
 \qquad L_k =
\prod_{r=0}^{k-1} D_{{\bfJ}_r} \,,\qquad k \geq 1 \,,
$$
and note that if we let $\mathcal{F}_k = \sigma( \bm{\psi}_{\bfi}: {\bfi} \in A_s, \, s < k)$ and $\mathcal{G}_k = \sigma( \mathcal{F}_k \cup \sigma({\bfJ}_s: s \leq k) )$ for $k \geq 1$ and $\mathcal{F}_0 = \mathcal{G}_0 = \sigma(\varnothing)$, then
\begin{align*}
E\left[ \left. L_k \right| \mathcal{G}_{k-1} \right] &= L_{k-1} E\left[ \left.  D_{{\bfJ}_{k-1}} \right| \mathcal{G}_{k-1} \right]  
 = L_{k-1}\,.
\end{align*}
Therefore $\{L_k : k \ge 0 \}$ is a nonnegative martingale with mean one, measurable with respect to the filtration $\{ \mathcal{G}_k: k \geq 0\}$. It is worth observing that $\{ L_k: k \geq 0\}$ is different from the Biggins-Kyprianou martingale $\{W_n(\alpha): n \geq 1\}$ (see Section~\ref{S.HighOrderLindley}).
Setting
\begin{equation}\label{eq:Ptilde}
\widetilde P(A) = E[ 1(A) L_{k} ]\,, \qquad \text{ for } A \in \mathcal{G}_k \quad \text{and all } k \geq 0\,,
\end{equation}
we obtain a new probability measure on $\mathcal{G} =\sigma\left(\bigcup_{k\geq 1} \mathcal{G}_k \right)$.
Note in particular that $\bm{\psi}=\bm{\psi}_\emptyset$ satisfies
\[
 \widetilde{P}(\bm{\psi} \in B ) = E  \left[ 1(\bm{\psi} \in B ) L_1  \right]
\]
for Borel sets $B$. It is also important to note that the filtration $\mathcal{G}_k$ is larger than the natural filtration of the martingale $\{L_k: k \geq 0\}$, in particular, it includes the values of the perturbations $\{Q_{\bfi}\}$. Therefore, in order to preserve the absolute continuity of $P$ with respect to $\widetilde P$ 
 for all generic branching vectors, we must ensure that the support of $(N, Q, C_1, C_2, \dots)$ does not change, which precludes the possibility of having $P(L_1 = 0) > 0$. The corresponding condition is given by Assumption~\ref{Ass1}(c).

\begin{remark} \upshape 
It is worth mentioning that both  Goldie's implicit renewal theorem \cite{Goldie_91} and the implicit renewal theorem on trees \cite{Jel_Olv_12a, Jel_Olv_15} allow $P(L_1 = 0) > 0$, which is precluded by Assumption~\ref{Ass1}.  Our current setting is less general because it clearly identifies the most likely path to the rare event $\{ W > t \}$ in cases where it is solely determined by the behavior of the spine. However, the implicit renewal theorems cover cases where the most likely path to the rare event is somewhat different than the one we will describe, which translates into the same exponential decay but with a different constant.
\end{remark}

As mentioned earlier, the change of measure defined above only affects the drift of the random walk and the perturbation along the spine.  Moreover, it preserves the branching property, i.e., the independence between the vectors $\{ \bm{\psi}_{\bfi}: {\bfi} \in \mathcal{T}\}$. The following result formalizes this statement; its proof is given in Section~\ref{S.Proofs}. Throughout the paper we use the convention $\sum_{i=a}^b x_i \equiv 0$ whenever $a > b$.

\begin{lemma}\label{L.Newmeasure}
Suppose Assumption~\ref{Ass1}(a) holds. For any measurable set $B \in \mathbb{N} \times \mathbb{R}^\infty$,
and any ${\bfi} \in A_k$,
\begin{align*}
\widetilde{P}( {\bfJ}_k = {\bfi}) &= \prod_{r=1}^k E[C_{i_r}^\alpha 1(N \geq i_r)],  \\
\widetilde{P}( \bm{\psi}_{\bfi} \in B | {\bfi} \neq {\bfJ}_k) &= P\left( (N, Q, C_1, C_2, \dots) \in B \right) , \\
\widetilde{P}( \bm{\psi}_{\bfi}  \in B | {\bfi} = {\bfJ}_k) &=  E\left[ 1\left( (N, Q, C_1, C_2, \dots) \in B \right) \sum_{j=1}^N C_j^\alpha \right].
\end{align*}
Moreover, under $\widetilde{P}$, the vectors $\{ \bm{\psi}_{\bfi}: {\bfi} \in A_k \}$ are conditionally independent given $\mathcal{G}_{k-1}$ for any $k \geq 1$. 
\end{lemma}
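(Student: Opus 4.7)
The central device is a telescoping identity obtained by conditioning on the full collection $\{\bm{\psi}_{\bfj}: {\bfj} \in U\}$ of branching vectors. Iterating the recursive selection rule that defines the spine gives
\[
P({\bfJ}_k = {\bfi} \mid \{\bm{\psi}_{\bfj}\}_{{\bfj}\in U}) = \prod_{r=1}^k \frac{C_{{\bfi}|r}^\alpha \, 1(i_r \leq N_{{\bfi}|r-1})}{D_{{\bfi}|r-1}},
\]
while on $\{{\bfJ}_k = {\bfi}\}$ the nested structure of the spine forces $L_k = \prod_{r=0}^{k-1} D_{{\bfi}|r}$. The $D_{{\bfi}|r-1}$ in the two products cancel exactly and leave the clean identity
\[
E\!\left[1({\bfJ}_k = {\bfi})\, L_k \,\big|\, \{\bm{\psi}_{\bfj}\}_{{\bfj}\in U}\right] = \prod_{r=1}^k C_{{\bfi}|r}^\alpha \, 1(i_r \leq N_{{\bfi}|r-1}),
\]
where Assumption~\ref{Ass1}(c) guarantees the denominators are a.s.\ positive. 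This identity drives all four assertions.

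Taking the unconditional expectation of the display and using that each factor depends only on $\bm{\psi}_{{\bfi}|r-1}$, while the $\bm{\psi}_{{\bfi}|0},\dots,\bm{\psi}_{{\bfi}|k-1}$ are independent with the common generic law under $P$, the expectation factors into $\prod_{r=1}^k E[C_{i_r}^\alpha 1(N \geq i_r)]$, giving the first identity.

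Because $\bm{\psi}_{\bfi}$ for ${\bfi}\in A_k$ is $\mathcal{F}_{k+1}$-measurable, for the remaining claims one switches to $L_{k+1} = L_k\, D_{{\bfJ}_k}$ in \eqref{eq:Ptilde}. For the third identity, on $\{{\bfJ}_k = {\bfi}\}$ the extra factor $D_{{\bfJ}_k} = D_{\bfi}$ is a function of $\bm{\psi}_{\bfi}$ only, which is independent of the ancestor vectors. Inserting $1(\bm{\psi}_{\bfi}\in B)\, D_{\bfi}$ into the telescoped identity factors the expectation as $E\!\left[1(\bm{\psi}\in B)\sum_{j=1}^N C_j^\alpha\right]\cdot \widetilde{P}({\bfJ}_k = {\bfi})$, and dividing out delivers the size-biased law. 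For the second identity, decompose $\{{\bfi}\neq {\bfJ}_k\}= \bigsqcup_{{\bfj}\in A_k\setminus\{{\bfi}\}}\{{\bfJ}_k = {\bfj}\}$; since $|{\bfi}|=|{\bfj}|=k$, the index ${\bfi}$ lies entirely off the path to ${\bfj}$ and off ${\bfj}$ itself, so $\bm{\psi}_{\bfi}$ is independent of $1({\bfJ}_k={\bfj})\, L_{k+1}$. Invoking $E[D]=1$ from Assumption~\ref{Ass1}(a), the telescoping computation applied at level $k+1$ gives $E[1({\bfJ}_k = {\bfj})\, L_{k+1}] = \widetilde{P}({\bfJ}_k = {\bfj})$, so summing over ${\bfj}$ yields $\widetilde{P}(\bm{\psi}_{\bfi}\in B,\, {\bfi}\neq {\bfJ}_k) = P(\bm{\psi}\in B)\, \widetilde{P}({\bfi}\neq {\bfJ}_k)$. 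The conditional independence assertion follows by the same scheme applied to a product $\prod_l 1(\bm{\psi}_{{\bfi}_l}\in B_l)$ over distinct ${\bfi}_l \in A_k$: at most one ${\bfi}_l$ can coincide with ${\bfJ}_k$ (contributing the size-biased factor through $D_{{\bfJ}_k}$), while the others depend on mutually independent vectors, so the joint expectation factors.

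The main obstacle is the bookkeeping: tracking which version of the martingale and which filtration applies, given that $\bm{\psi}_{\bfi}$ for ${\bfi}\in A_k$ lives in $\mathcal{F}_{k+1}$ rather than $\mathcal{F}_k$, and disentangling the interaction between the random spine selection and the i.i.d.\ structure of $\{\bm{\psi}_{\bfj}\}$. Once the telescoped cancellation is isolated, the independence arguments for each of the four conclusions reduce to a short computation.
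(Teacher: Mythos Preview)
Your proof is correct and rests on the same mechanism as the paper's: the $D_{{\bfi}|r-1}$ in the spine-selection probabilities cancel exactly against the factors in $L_k$, leaving a product $\prod_{r=1}^k C_{{\bfi}|r}^\alpha 1(N_{{\bfi}|r-1}\geq i_r)$ whose expectation factors by the i.i.d.\ structure of the branching vectors.

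The organizational difference is minor but worth noting. The paper conditions iteratively on the filtration $\mathcal{G}_k,\mathcal{G}_{k-1},\ldots$, peeling off one generation at a time, and along the way obtains the joint formula
\[
\widetilde{P}\!\left(\bm{\psi}_{{\bfi}|r}\in B_r,\,r=0,\dots,k;\ {\bfJ}_k={\bfi}\right)
= E\!\left[1(\bm{\psi}\in B_k)D\right]\prod_{r=1}^k E\!\left[1(\bm{\psi}\in B_{r-1},\,N\ge i_r)C_{i_r}^\alpha\right],
\]
from which all three identities are read off by specializing the $B_r$'s. You instead condition once on the entire family $\{\bm{\psi}_{\bfj}\}_{{\bfj}\in U}$, obtain the cancellation in one shot, and then handle each identity separately. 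Your route is slightly more direct for this lemma and makes the role of the i.i.d.\ structure very transparent; the paper's route stays within the martingale/filtration framework used elsewhere and yields the full joint law along the spine as a byproduct. For the off-spine identity you decompose over $\{{\bfJ}_k={\bfj}\}$ with ${\bfj}\neq{\bfi}$, whereas the paper conditions on $\mathcal{G}_k$ and separates $\bm{\psi}_{\bfi}$ from $D_{{\bfJ}_k}$ directly; both arguments are equally short. Your invocation of Assumption~\ref{Ass1}(c) for the positivity of the denominators is appropriate, even though the lemma as stated only cites (a): the spine construction itself already presupposes (c).
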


Recall that by taking the logarithm of the weights we can define a perturbed random walk along every path ${ \bf i} \in \mathcal{T}$. The one along the spine will be special, since it is the one being affected by the change of measure, and will be the only one guaranteed to eventually exceed any level $t$. To make this precise, let us define $X_{\bfi} = \log C_{\bfi}$ and note that for any ${\bfi} \in A_k$,
$$S_{\bfi} = \log \Pi_{\bfi} = X_{{\bfi}|1}  + \dots + X_{{\bfi}|k-1} + X_{\bfi},$$
where the $\{ X_{{\bfi}|r} \}_{1 \leq r \leq k}$ are independent of each other, although not necessarily identically distributed. To identify the spine we use the notation
$\hat X_k = X_{{\bfJ}_k} = \log C_{{\bfJ}_k}$, identify the random walk along the chosen path by
\begin{equation}\label{e:Vk}
V_k = \hat X_1 + \dots + \hat X_k, \qquad V_0 = 0,
\end{equation}
and use
$$\xi_k = Y_{{\bfJ}_k} = \log Q_{{\bfJ}_k}, \qquad \xi_0 = Y_\emptyset = \log Q_\emptyset,$$
for its perturbation. The following result establishes that $\{ V_k: k \geq 0\}$ defines a random walk with i.i.d.~increments and positive drift.

\begin{lemma}\label{L.DistOnJ}
Suppose Assumption~\ref{Ass1}(a) holds. For all $k\geq 1$ and $x_1, \ldots x_k, y  \in \mathbb{R}\cup\{\infty\}$, we have
$$\widetilde P\left( \hat X_1 \leq x_1, \dots, \hat X_k \leq x_k, \, \xi_k \leq y  \right) = E\left[ 1(Q \leq e^y) \sum_{i=1}^N C_i^\alpha \right] \prod_{r=1}^k G(x_r) ,$$
where
$$G(x) = \sum_{i =1}^\infty E\left[ 1(C_i \leq e^x, \, N \geq i) C_i^\alpha \right] = E\left[ \sum_{i=1}^N 1(\log C_i \leq x) C_i^\alpha \right].$$
In particular, the $\{ \hat X_i: i \geq 1\}$ are i.i.d.~with common distribution $G$ under $\widetilde{P}$, $\widetilde{E}\left[ |\hat  X_1| \right] < \infty$, and have mean
 $$\mu := \widetilde{E} \left[ \hat X_1\right] = E\left[ \sum_{i=1}^N C_i^\alpha \log C_i  \right]  \in (0, \infty).$$
\end{lemma}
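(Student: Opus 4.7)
The plan is to compute the joint probability directly from the definition of $\widetilde P$ using the martingale $L_{k+1}$, and decompose by the possible values of the spine index ${\bfJ}_k$. Since $\hat X_r$ depends on $\bm{\psi}_{{\bfJ}_{r-1}}$ and $\xi_k$ depends on $\bm{\psi}_{{\bfJ}_k}$, the event $A = \{\hat X_1 \le x_1, \ldots, \hat X_k \le x_k, \xi_k \le y\}$ is $\mathcal{G}_{k+1}$-measurable, so $\widetilde P(A) = E[1(A) L_{k+1}]$. I would then split this according to the value of ${\bfJ}_k = \bfi = (i_1,\ldots,i_k) \in \mathbb{N}_+^k$. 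Conditional on $\mathcal{F}_{k+1}$, the spine probability $P({\bfJ}_k = \bfi \mid \mathcal{F}_{k+1})$ equals $\prod_{r=1}^k \bigl(C_{(i_1,\ldots,i_r)}^\alpha / D_{(i_1,\ldots,i_{r-1})}\bigr) 1(i_r \le N_{(i_1,\ldots,i_{r-1})})$, while on $\{{\bfJ}_k = \bfi\}$ we have $L_{k+1} = \prod_{r=0}^k D_{(i_1,\ldots,i_r)}$. Multiplying the two, the $D$ factors telescope and only $D_{(i_1,\ldots,i_k)} \prod_{r=1}^k C_{(i_1,\ldots,i_r)}^\alpha$ survives. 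This is the key algebraic step.

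The next step exploits independence of the $\bm{\psi}_{\bfj}$ across nodes $\bfj \in U$. The remaining integrand is a product in which each factor depends on exactly one of the $k+1$ distinct vectors $\bm{\psi}_{(i_1,\ldots,i_r)}$, $r = 0, 1, \ldots, k$, along the chosen ancestral line: for $r = 1, \ldots, k$ the factor $C_{(i_1,\ldots,i_r)}^\alpha 1(C_{(i_1,\ldots,i_r)} \le e^{x_r}, i_r \le N_{(i_1,\ldots,i_{r-1})})$ depends only on $\bm{\psi}_{(i_1,\ldots,i_{r-1})}$, while the terminal factor $1(Q_{(i_1,\ldots,i_k)} \le e^y) D_{(i_1,\ldots,i_k)}$ depends only on $\bm{\psi}_{(i_1,\ldots,i_k)}$. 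Hence the expectation factorizes as
\[\prod_{r=1}^k E\bigl[C_{i_r}^\alpha 1(C_{i_r} \le e^{x_r},\, N \ge i_r)\bigr] \cdot E\Bigl[1(Q \le e^y)\sum_{j=1}^N C_j^\alpha\Bigr],\]
each expectation taken with respect to the generic branching vector $\bm{\psi}$. Summing over $\bfi \in \mathbb{N}_+^k$ decouples into independent sums over each $i_r$, each of which equals $G(x_r)$ by its definition, yielding exactly the claimed joint distribution.

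The remaining conclusions follow immediately. Taking $y = +\infty$ and using $E\bigl[\sum_j C_j^\alpha\bigr]=1$ from Assumption~\ref{Ass1}(a) gives $\widetilde P(\hat X_1 \le x_1,\ldots,\hat X_k \le x_k) = \prod_{r=1}^k G(x_r)$, so $\{\hat X_i\}_{i\ge 1}$ are i.i.d.\ with common distribution $G$ under $\widetilde P$. A direct integration against $dG$ gives $\widetilde E[\hat X_1] = \int x\,dG(x) = E\bigl[\sum_{i=1}^N C_i^\alpha \log C_i\bigr] = \mu \in (0,\infty)$, with integrability of $\hat X_1$ under $\widetilde P$ already implicit in the condition $E[\sum_i C_i^\alpha \log C_i] \in (0,\infty)$ of Assumption~\ref{Ass1}(a), which requires both the positive and the negative part of the integrand to be integrable. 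The only mildly awkward aspect of the argument is bookkeeping---tracking which $\bm{\psi}$-vector along the spine contributes to each factor of the integrand---but once the telescoping identity between $L_{k+1}$ and the spine's conditional law is written out, the rest of the proof is an exact computation rather than any estimate.
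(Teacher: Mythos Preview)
Your proposal is correct and follows essentially the same route as the paper: decompose over the spine index ${\bfJ}_k = {\bfi}$, use the telescoping between $L_{k+1}$ and $P({\bfJ}_k = {\bfi}\mid\mathcal{F}_{k+1})$ to reduce to a product of factors each depending on a single $\bm{\psi}$-vector, then factorize by independence and sum over $i_1,\ldots,i_k$. The only notable difference is in the integrability argument: you appeal to the convention that $E\bigl[\sum_i C_i^\alpha\log C_i\bigr]\in(0,\infty)$ already forces both parts to be finite, whereas the paper bounds $\widetilde E[\hat X_1^-]$ explicitly via the inequality $C_i^\alpha\log^- C_i\le\bigl(\sup_{0\le x\le1}x^{\alpha-\beta}|\log x|\bigr)C_i^\beta$ and Assumption~\ref{Ass1}(b), which is a slightly more self-contained verification.
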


We now explain how to compute the probability $P(W > t)$ using the change of measure described above. We start by defining the hitting time of level $t$ for the perturbed branching random walk defined by $\{ (S_{\bfi}, Y_{\bfi}): {\bfi} \in \mathcal{T}\}$, which we denote $\gamma(t) = \inf\{ {\bfi}  \in \mathcal{T} : S_{\bfi} + Y_{\bfi}  > t\}$, where the infimum is taken according to the $\prec$ order defined in Section~\ref{S.WBP}. We use $\nu(t) = |\gamma(t)|$ to denote the generation in the weighted branching process where the perturbed random walk along a path exceeds level $t$.  Next, define the hitting time of level $t$ along the spine, $\tau(t) = \inf\{ k \geq 0: V_k + \xi_k > t\}$.

Note that $\nu(t)+1$ and $\tau(t)+1$ are stopping times for the weighted branching process with respect to the filtration $\{ \mathcal{G}_k: k \geq 0\}$, and since it is possible for a path different from the spine to hit level $t$ before the spine does, then
$$\nu(t) \leq \tau(t),$$
with equality possible, e.g. if  ${\bfJ}_{\tau(t)} = \gamma(t)$. Moreover, since  $W = \bigvee_{{\bfi} \in \mathcal{T}} (S_{\bfi} + Y_{\bfi})$, it follows that
$$P(W > t) = P(\nu(t) < \infty).$$

The next step is to apply the change of measure and derive an alternative representation for $P(\nu(t) < \infty)$. To this end, observe that on the set
$\{\gamma(t) = {\bfi}\}$, we have $N_{{\bfi} |r-1} \geq i_r$ for all $ r=1,\ldots , k$ and that $\{\gamma(t) = {\bfi},\, {\bfJ}_{k} = {\bfi} \} =
\{\tau(t) = k, \, {\bfJ}_{\tau(t)} = \gamma(t) = {\bfi}\}$.
Also note that for
${\bfi} \in A_k$,
$$
 P\left( \left. {\bfJ}_{k} = {\bfi} \right| \mathcal{F}_k \right) = \frac{ \prod_{r=1}^k  C^\alpha_{{\bfi} |r }  1(N_{{\bfi} |r-1} \geq i_r) }{ \prod_{r=0}^{k-1} D_{{\bfi}|r}}\,.
$$
Therefore, since $P\left( \left. {\bfJ}_{k} = {\bfi} \right| \mathcal{F}_k \right)  = P\left( \left. {\bfJ}_{k} = {\bfi} \right| \mathcal{F}_{k+1} \right)$, and since $\prod_{r=0}^{k-1} D_{{\bfi}|r} > 0$ a.s. for all ${\bfi} \in A_k$ and all $k$, we have
\begin{align} \nonumber
e^{\alpha t} P(W > t) &= E\left[ \sum_{k=0}^\infty  \sum_{{\bfi} \in A_k} e^{\alpha t} 1(\gamma(t) = {\bfi}) \right] \\\nonumber
&= \sum_{k=0}^\infty E\left[ \sum_{{\bfi} \in A_k} 1(\gamma(t) = {\bfi}) e^{\alpha t} \frac{\Pi_{\bfi}^\alpha}{\Pi_{\bfi}^\alpha} \cdot
\frac{ \prod_{r=0}^{k-1} D_{{\bfi}|r} }{ \prod_{r=0}^{k-1} D_{{\bfi}|r}} \right] \\\nonumber
&= \sum_{k=0}^\infty E\left[ \sum_{{\bfi} \in A_k} 1(\gamma(t) = {\bfi}) e^{-\alpha (S_{\bfi} - t) }   \cdot \prod_{r=0}^{k-1} D_{{\bfi}|r} \cdot  P\left( \left. {\bfJ}_{k} = {\bfi}  \right|  \mathcal{F}_{k+1} \right)  \right] \\\nonumber 
&= \sum_{k=0}^\infty E\left[ E\left[ \left. \sum_{{\bfi} \in A_k} 1(\gamma(t) = {\bfi})  e^{-\alpha (S_{\bfi} - t) }   L_k  1( {\bfJ}_{k} = {\bfi})   \right|  \mathcal{F}_{k+1} \right]  \right] \\
&= \sum_{k=0}^\infty E\left[  1(\gamma(t) = {\bfJ}_{\tau(t)}, \, \tau(t) = k )e^{-\alpha (V_{\tau(t)} - t) }   L_k       \right] .\label{eq:Eoverk}
\end{align}
Now note that although $\tau(t)$ and $|\gamma(t)|$ are not stopping times with respect to $\{\mathcal{G}_k: k \geq 0\}$, $\tau(t) + 1$ and $|\gamma(t)| + 1$ are. Hence, multiplying and dividing by $D_{{\bfJ}_{k}}$ we obtain
\begin{align*}
e^{\alpha t} P(W > t) &= \sum_{k=0}^\infty E\left[  1(\gamma(t) = {\bfJ}_{\tau(t)}, \, \tau(t)+1 = k+1) e^{-\alpha (V_{\tau(t)} - t) } D_{{\bfJ}_{k}}^{-1}   L_{k+1}       \right] \\
&=  \widetilde{E}\left[  1(\gamma(t) = {\bfJ}_{\tau(t)} , \tau(t) < \infty) e^{-\alpha (V_{\tau(t)} - t) } D_{{\bfJ}_{\tau(t)}}^{-1}         \right].
\end{align*}

We will show that since $\{V_k + \xi_k: k \geq 0\}$ is a perturbed random walk with positive drift under $\widetilde{P}$, then, under Assumption~\ref{Ass1}(a)-(c),  $\widetilde P(\tau(t) < \infty) = 1$ for all $t$ (see Lemma~\ref{L.nuFinite}). Hence, we obtain
\begin{equation}  \label{eq:ISEstimator}
	e^{\alpha t}P(W > t) = \widetilde E \left[ 1({\bfJ}_{\tau(t)} = \gamma(t) ) e^{-\alpha (V_{\tau(t)} - t) }  D_{{\bfJ}_{\tau(t)}}^{-1} \right]\,, 
\end{equation}
where the expectation on the right-hand side no longer vanishes as $t \to \infty$. Note that the right-hand-side of \eqref{eq:ISEstimator} is an explicit function of the first $\tau(t)$ generations of a weighted branching process with a distinguished spine, which can be directly estimated using standard Monte Carlo methods, as discussed in Section~\ref{S.Simulation}.

\begin{remark} \label{R.MainRemarks} \upshape 
\begin{enumerate}[(a)]
\item Note that if $Q$ is independent of $(N, C_1, C_2, \dots)$, then we can use the filtration $\mathcal{F}_0' = \sigma(Q_\emptyset)$, $\mathcal{F}_k' = \sigma \left( \psi_{\bfi}: {\bfi} \in A_s, s < k; Q_{\bfj}: {\bfj} \in A_k \right)$ and its corresponding $\mathcal{G}_0' = \mathcal{F}_0'$,  $\mathcal{G}_k' = \sigma\left( \mathcal{F}_k' \cup \sigma({\bfJ}_s: s \leq k) \right)$, with respect to which both $\tau(t)$ and $\nu(t)$ are stopping times, and obtain the simpler expression

$$e^{\alpha t} P(W > t) = \widetilde{E}\left[ 1({\bfJ}_{\tau(t)} = \gamma(t)) e^{-\alpha(V_{\tau(t)} - t)} \right].$$

\item In the non-branching case $(N \equiv 1)$, equation \eqref{eq:ISEstimator} reduces to
\begin{equation} \label{eq:NonBranching}
P(W > t) = \widetilde{E} \left[ e^{-\alpha V_{\tau(t)+1} }  \right],
\end{equation}
which we point out is different from equation (3.4) in \cite{Araman}, since their expression has $V_{\tau(t)}$ instead of $V_{\tau(t)+1}$. As we explained earlier, $\tau(t)$ is not a stopping time with respect to the natural filtration $\mathcal{H}_n = \sigma( \hat X_i: 1 \leq i \leq n)$ of the martingale $L_n$, so the change of measure argument in \cite{Araman} needs to be modified (see Theorem~3.2 in Chapter XIII of \cite{Asm2003}). Once we consider the augmented filtration $\mathcal{F}_k$ (which is equal to $\mathcal{G}_k$ in the non-branching case) and apply the change of measure up to the stopping time $\tau(t)+1$, we obtain the expression given by \eqref{eq:NonBranching}.  

\item The case where the $Q$ is bounded is also special in the sense of the theory needed for its analysis. In particular, the exponential asymptotics of $P(W > t)$ can be easily obtained without using the augmented filtration nor any implicit renewal theory. To illustrate this we include in Section~\ref{S.Proofs} (see Theorem~\ref{T.BoundedQNonBranching}) a very short proof of Theorem~1 in \cite{Araman}, for the non-branching case. Since the focus of the current paper is to obtain a more explicit representation for the constant $H$ obtained through the implicit renewal theorem on trees (Theorem~3.4 in \cite{Jel_Olv_15}), we do not pursue the bounded $Q$ case separately in the branching setting. 

\item Moreover in the case of a.s. bounded $Q$, say $P(Q \leq q) = 1$, we can obtain a {\em Cram\'er-Lundberg} type of inequality for $P(W > t)$ by defining $\gamma^*(t) = \inf\{ {\bfi} \in \mathcal{T}: S_{\bfi} > t\}$, $\nu^*(t) = |\gamma^*(t)|$, and $\tau^*(t) = \inf\{ n \geq 1: V_n > t\}$, and noting that both $\nu^*(t)$ and $\tau^*(t)$ are stopping times with respect to the filtration $\mathcal{H}_n= \sigma( \{(N_{\bf i}, C_{({\bf i}, 1)}, C_{({\bf i}, 2)}, \dots): {\bf i} \in A_s, \, s < k\}, \{{\bf J}_s: s \leq k\})$.
The same change of measure arguments used above yield for any $t > c := \log q$:
\begin{align*}
P(W > t) &= P( \nu^*(t-c) \leq \nu(t) < \infty) \\
&\leq P(\nu^*(t-c) < \infty) \\
&= \widetilde{E}\left[ 1({\bfJ}_{\tau^*(t-c)} = \gamma^*(t-c)) e^{-\alpha V_{\tau^*(t-c)}} \right] \\
&\leq \widetilde{E}\left[ e^{-\alpha (V_{\tau^*(t-c)} - t + c)} \right] e^{-\alpha(t-c)} \\
&\leq q^\alpha e^{-\alpha t}.
\end{align*}
This inequality for all $t \geq c$ holds under Assumption~\ref{Ass1}(a), and cannot be obtained using only the implicit renewal theorem for trees in \cite{Jel_Olv_15}.

\end{enumerate}
\end{remark}

\subsection{The Markov renewal theorem}

As pointed out, the new expression provided by \eqref{eq:ISEstimator} can easily be estimated via simulation. however, it can also be directly analyzed to obtain an alternative representation for the constant $H$ in $P(W > t) \sim H e^{-\alpha t}$, $t \to \infty$. The idea behind this analysis is the use of renewal theory on the expectation
$$\widetilde E \left[ 1({\bfJ}_{\tau(t)} = \gamma(t) ) e^{-\alpha (V_{\tau(t)} - t) }  D_{{\bfJ}_{\tau(t)}}^{-1}  \right].$$ Note that although the exponential term inside the expectation depends only on the random walk $\{ V_k: k \geq 0\}$ and its hitting time of level $t$, the event $\{ {\bfJ}_{\tau(t)} = \gamma(t)\}$ depends on the history of the tree  $\mathcal{T}$ up to generation $\nu(t)$. Hence, any renewal argument would need to include the latter, which complicates matters since its exponential growth (whenever $E[N] > 1$) implies it does not naturally renew at any point. However, intuitively, only the paths that branch out from the spine close to the time when the spine is likely to reach level $t$ are likely to reach level $t$ at all. This means that it should suffice to focus only on these paths, say a subtree of height $m$ rooted at the spine that moves along the random walk $\{ V_k:  k \geq 0\}$; see Figure~\ref{F.Spine}. Since the sequence of such height-$m$ subtrees forms a Harris chain, the key to our main theorem is the use of the Markov renewal theorem in \cite{Alsmeyer_94}.

\begin{figure}[t]
\centering
\begin{subfigure}[t]{0.45\textwidth}
	\centering
	\begin{picture}(250,180)(0,0)
		\put(0,0){\includegraphics[scale = 0.5, bb = 10 385 410 785, clip]{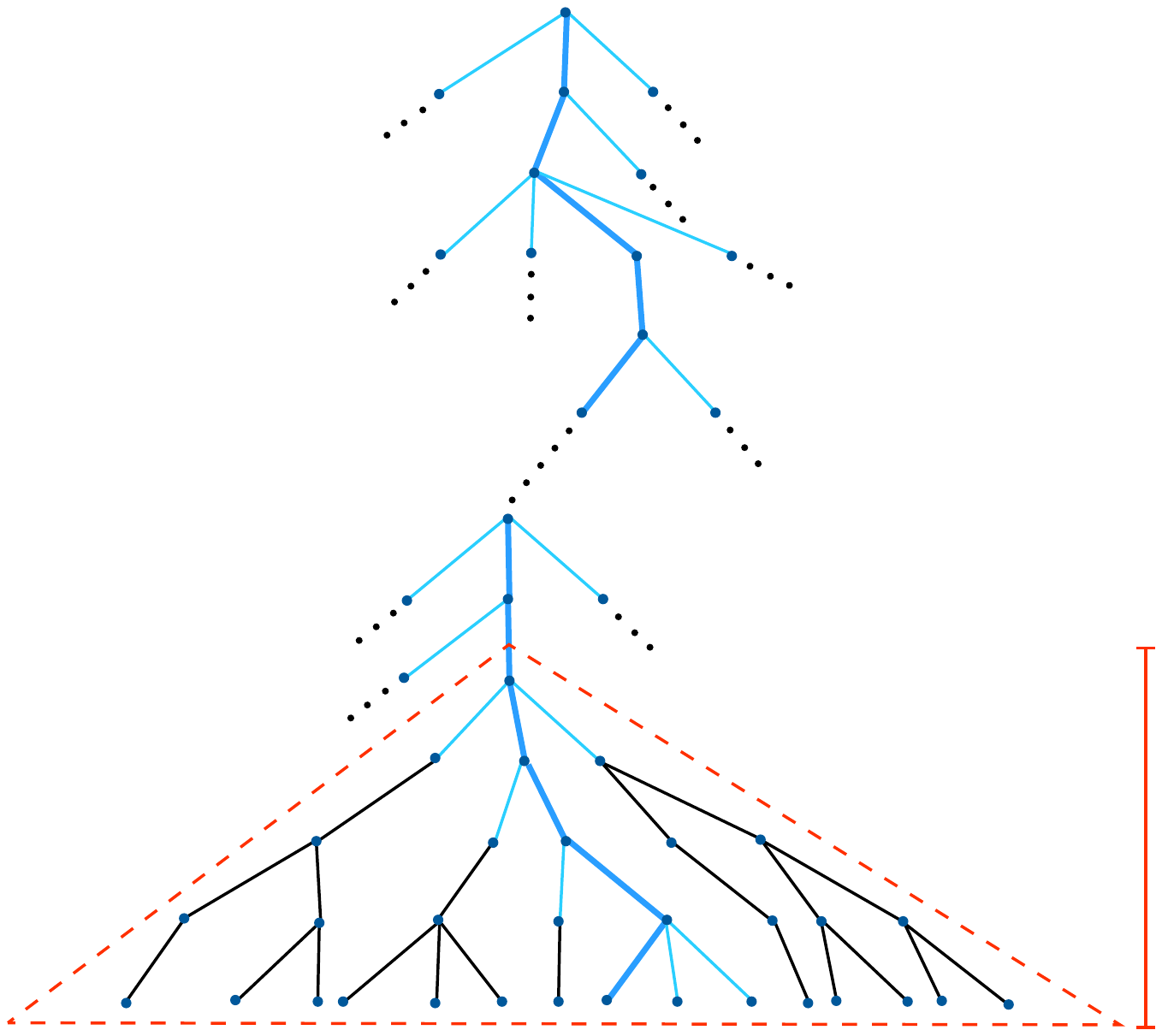}}  % 115-115 = 0
		%\put(0,0){\includegraphics[scale = 0.5, clip, trim=0.5cm 13.5cm 6cm 2cm]{Spine}}  % 115-115 = 0
		\put(102,177){\footnotesize $\emptyset$} % 217-115 = 102
		\put(205,60){\rotatebox{-90}{\footnotesize $m$ generations}} % 320 - 115 = 205
	\end{picture}
	\caption{The spine of $\mathcal{T}$.} \label{F.Spine}
\end{subfigure}
\hspace{10mm}
\begin{subfigure}[t]{0.45\textwidth}
	\centering
	\begin{picture}(250,180)(0,0)
		\put(0,0){\includegraphics[scale = 0.5, bb = 10 385 410 785, clip]{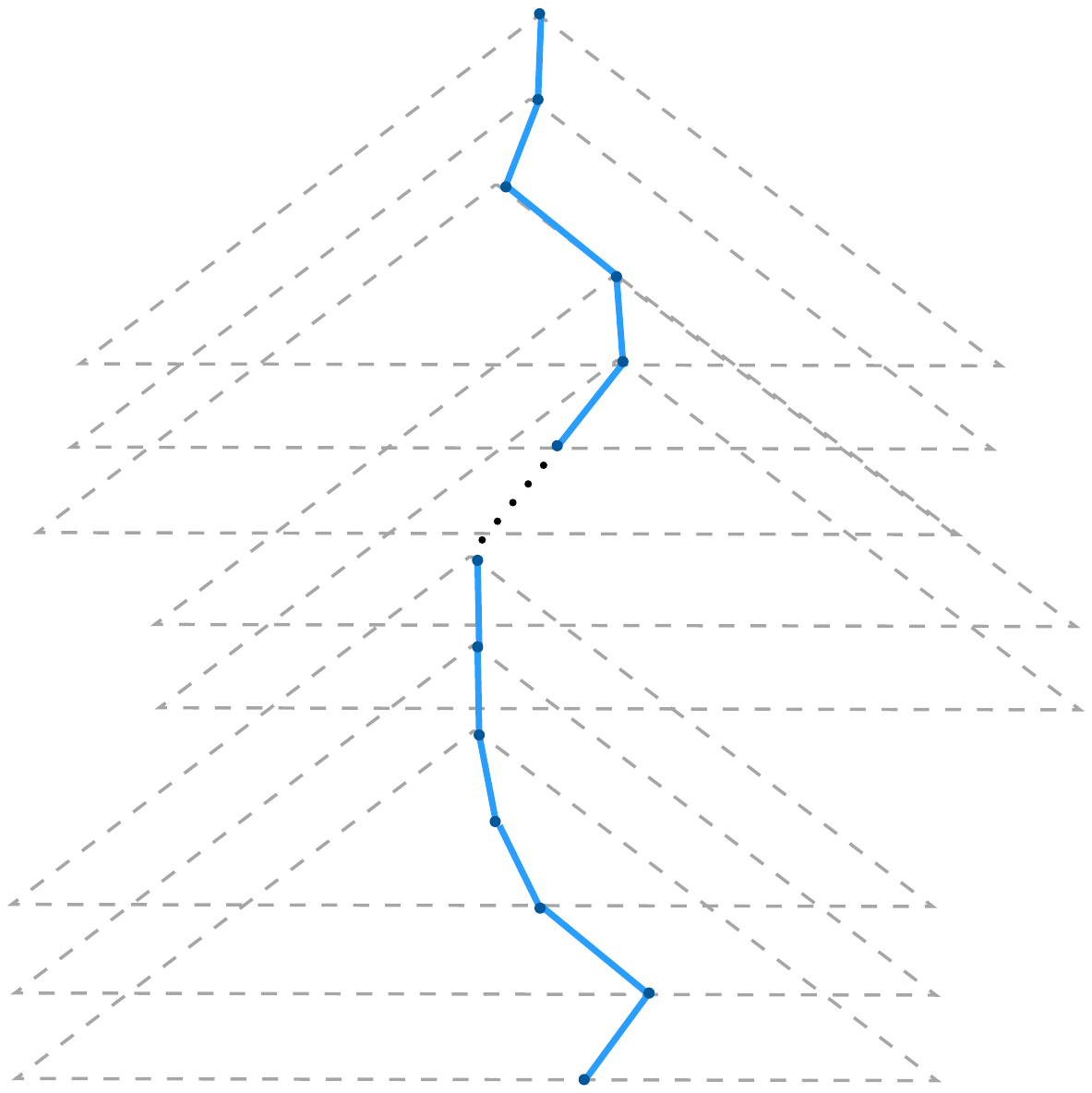}}
		\put(102,177){\footnotesize $\emptyset$}
	\end{picture}
	\caption{The subtrees of height $m$ rooted in the spine, $\mathcal{T}^{(m)}_k$, $k \geq 0$.} \label{F.SpineHarrisChain}
\end{subfigure}
\caption{The spine of $\mathcal{T}$ and the Markov chain consisting of subtrees.}
\end{figure}

To formalize this idea, we define the subtrees of height $m$ rooted at node ${\bfJ}_k$ (the $k$th node along the spine) according to:
\begin{equation} \label{eq:T_nDef}
\mathcal{T}^{(m)}_k = \bigcup_{n=0}^{m-1} A_{n,{\bfJ}_k}, \qquad k \geq 0,
\end{equation}
where $A_{n,{\bfi}} = \{ ({\bfi}, {\bfj}) \in \mathcal{T}: |{\bfj}| = n\}$ is the $n$th generation of the subtree rooted at node ${\bfi}$ (See Figure~\ref{F.SpineHarrisChain}). Focusing on these subtrees rooted at the spine allows us to analyze the expectation $\widetilde E \left[ 1({\bfJ}_{\tau(t)} = \gamma(t) ) e^{-\alpha (V_{\tau(t)} - t) } D_{{\bfJ}_{\tau(t)}}^{-1}  \right]$ using the Markov renewal theorem  in \cite{Alsmeyer_94}.
 Note that even in the non-branching case $(N \equiv 1)$, the perturbations that the $Q$'s represent make it difficult to identify clear regeneration epochs for the process $\{V_k+\xi_k: k \geq 0\}$, which is a problem that is solved by looking not only at the current value of $V_n + \xi_n$, but also at its $m$-step history. We give more details on this idea and the intuition behind it in Section~\ref{S.Proofs}.

\section{Main result}\label{S.MainResult}

We are now ready to present our main theoretical result. Recall $\mu = \widetilde{E}[\hat X_1] = E\left[ \sum_{i=1}^N C_i^\alpha \log C_i \right]$.

\begin{theo}\label{T.Main}
If $(N, Q, C_1, C_2, \ldots)$ satisfies Assumption \ref{Ass1} for some $\alpha > 0$ and $W = \log R$, where $R$ is the endogenous solution to \eqref{eq:Maximum} given by \eqref{eq:Endogenous}, then
\[
	P(W > t) \sim H e^{-\alpha t} \quad \mbox{as} \quad t \to \infty,
\]
where
\[
	H = \lim_{m\to\infty} \frac{\widetilde E \left[ \left( e^{\alpha  \xi_{m}} - e^{\alpha \left( \bigvee_{{\bfi} \prec {\bfJ}_{m}} (S_{\bfi} + Y_{\bfi}) - V_m \right)} \right)^+ D_{{\bfJ}_m}^{-1}  \right] }{\alpha \mu}.
\]
If furthermore Assumption \ref{Ass2} holds, then $H > 0$. 
\end{theo}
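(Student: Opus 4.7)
My plan is to turn the importance-sampling identity~\eqref{eq:ISEstimator} into a Markov-renewal sum and apply Alsmeyer's Markov renewal theorem~\cite{Alsmeyer_94}. Setting
\[
M_k := \bigvee_{\bfi \prec \bfJ_k} (S_{\bfi} + Y_{\bfi}) - V_k,
\]
the event $\{\tau(t) = k,\, \bfJ_{\tau(t)} = \gamma(t)\}$ coincides with $\{M_k < t - V_k < \xi_k\}$, so decomposing \eqref{eq:ISEstimator} over $k$ yields
\[
e^{\alpha t} P(W > t) = \sum_{k=0}^\infty \widetilde E\!\left[ e^{\alpha (t-V_k)} \,1(M_k < t - V_k < \xi_k)\, D_{\bfJ_k}^{-1} \right].
\]
By Lemma~\ref{L.DistOnJ}, $\{V_k\}$ is a random walk with positive drift $\mu$ and non-arithmetic increments (Assumption~\ref{Ass1}(d)) under $\widetilde P$, which is the correct setting once a suitable Markov chain of driving states is identified.

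For each $m \geq 1$ I would introduce a truncated maximum $M_k^{(m)}$ by restricting the join to those $\bfi \prec \bfJ_k$ that descend from $\bfJ_{(k-m)\vee 0}$. Lemma~\ref{L.Newmeasure} shows that under $\widetilde P$ the spine vectors $\{\bm{\psi}_{\bfJ_r}\}$ are i.i.d.~with biased law while the off-spine vectors are independent copies of $\bm{\psi}$ under $P$; hence the process $\{Z_k^{(m)}\}$ encoding the height-$m$ subtree rooted at $\bfJ_{(k-m)\vee 0}$ together with the shifted random-walk positions is a Markov chain that is stationary from $k = m$ onwards, with invariant law equal to the distribution at time $m$. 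Crucially, at $k = m$ the truncation is vacuous, so $M_m^{(m)} = M_m$. Applying the Markov renewal theorem to $\phi^{(m)}(z,u) = e^{\alpha u}\,1(M^{(m)}(z) < u < \xi(z))\, D(z)^{-1}$ -- direct Riemann integrability follows from $\widetilde E[e^{\alpha \xi_m} D_{\bfJ_m}^{-1}] = E[Q^\alpha] < \infty$ via Assumption~\ref{Ass1}(b) and Lemma~\ref{L.Newmeasure} -- yields
\[
\lim_{t \to \infty} \sum_k \widetilde E\!\left[ e^{\alpha (t-V_k)}\, 1(M_k^{(m)} < t - V_k < \xi_k)\, D_{\bfJ_k}^{-1} \right] = \frac{1}{\alpha\mu}\, \widetilde E\!\left[ (e^{\alpha \xi_m} - e^{\alpha M_m})^+ D_{\bfJ_m}^{-1} \right] =: H_m.
\]
Since $M_k^{(m)} \leq M_k$, this immediately gives $\limsup_{t \to \infty} e^{\alpha t} P(W > t) \leq H_m$ for every $m$.

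The main obstacle is the matching lower bound: I need to show that the error
\[
\sum_k \widetilde E\!\left[ e^{\alpha(t-V_k)}\, 1(M_k^{(m)} < t - V_k \leq M_k,\, t - V_k < \xi_k)\, D_{\bfJ_k}^{-1}\right]
\]
-- the contribution of off-spine paths branching off the spine more than $m$ generations in the past -- is $o(1)$ as $m \to \infty$ uniformly in $t$. I would control it by a union bound over the branch-off generation, exploiting that under $\widetilde P$ these distant off-spine subtrees are distributed as under $P$ with strictly negative logarithmic drift (Assumption~\ref{Ass1}(b)) and that the $\alpha$-weighted moment of Assumption~\ref{Ass1}(e) forces geometric decay in the branch-off depth. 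This would yield $\liminf_t e^{\alpha t} P(W > t) \geq H_m - \epsilon_m$ with $\epsilon_m \to 0$, hence $P(W > t) \sim H e^{-\alpha t}$ with $H = \lim_m H_m$. Finally, for strict positivity under Assumption~\ref{Ass2}, I would invoke Theorem~3.4 of~\cite{Jel_Olv_15}, which under the additional moment hypothesis establishes $P(W > t) \sim H' e^{-\alpha t}$ with $H' > 0$; uniqueness of the asymptotic constant then forces $H = H' > 0$.
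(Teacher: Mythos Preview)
Your proposal is correct and takes essentially the same route as the paper: decompose \eqref{eq:ISEstimator} over $k$, introduce the height-$m$ subtree Markov chain along the spine, apply Alsmeyer's Markov renewal theorem to obtain the upper bound $H_m$, control the truncation error for a matching lower bound using Assumption~\ref{Ass1}(e), and invoke \cite{Jel_Olv_15} for positivity. The only place where the paper is more explicit than your sketch is the lower-bound error, which it packages not via a direct geometric-decay union bound but as a renewal sum $B\sum_k \widetilde E[h_m(t-V_k)]$ for an explicit d.R.i.\ function $h_m(x)=\widetilde E[1(Z_0>x)e^{-\alpha(V_{m+1}-x)^+}]$ (Assumption~\ref{Ass1}(e) gives $\widetilde E[Z_0^+]<\infty$ and hence integrability), whose renewal limit tends to $0$ as $m\to\infty$ because $V_{m+1}\to\infty$ $\widetilde P$-a.s.
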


\begin{remark}\label{R.SameConstant} \upshape 
It it interesting to compare the expression for $H$ in the theorem with its counterpart obtained through the use of the implicit renewal theorem on trees (Theorem~3.4 in \cite{Jel_Olv_15}), which written in terms of $W$ under our current assumptions\footnote{Theorem~3.4 in \cite{Jel_Olv_15} allows $Q$, and therefore $R$, to take negative values.} becomes
\begin{equation}\label{eq:EquivH}
	\frac{E\left[ e^{\alpha Y} \vee \bigvee_{i=1}^N e^{\alpha (X_i + W_i)} - \sum_{i=1}^N e^{\alpha (X_i + W_i)} \right]}{\alpha \mu},
\end{equation}
where the $\{W_i\}$ are i.i.d.~copies of $W$ independent of the vector $(N, Y, X_1, X_2, \dots)$. As we can see, the two representations are significantly different, despite the fact that they are necessarily equal to each other. However, the representation given by Theorem~\ref{T.Main} applies only to our setting where the rare event is determined by the spine, which under $\widetilde{P}$ behaves very differently than all other paths in the tree, while the constant obtained through the implicit renewal theorem on trees also works for the case where $P(L_1 = 0) > 0$.
\end{remark}

\section{An importance sampling algorithm} \label{S.Simulation}

The same exponential change of measure used to establish \eqref{eq:CramLundAsymptotic} in the non-branching case is well-known to provide an unbiased and strongly efficient estimator for the rare event probability $P(W>t)$ when $t$ is large. Throughout this section we assume that $N < \infty$ a.s.

To relate our estimator to the one used in the non-branching case, suppose first that the goal is  to estimate the tail distribution of the all-time maximum of the random walk $S_n = X_1 + \dots + X_n$ when $E[X_1] < 0$.  For large values of $t$, estimating $P(W  > t) = P(\sup_{n} S_n > t)$ using the naive estimator $1(W > t)$ would require prohibitively large sample sizes, since its relative error grows unboundedly, i.e.,
\[
	\frac{\var(1(W>t))}{P(W>t)^2} = \frac{P(W>t)P(W\le t)}{P(W>t)^2}
	%= \frac{P(W\le t)}{P(W>t)}
	\to \infty \quad\mbox{as}\quad t \to\infty,
\]
However, whenever there exists $\alpha > 0$ such that $E[e^{\alpha X_1}] = 1$ and $E[X_1 e^{\alpha X_1} ] \in (0, \infty)$, Siegmund's algorithm \cite{Siegmund} takes advantage of the representation
$$P(W > t) = \widetilde{E}\left[ e^{-\alpha S_{\tau(t)}} 1(\tau(t) < \infty)\right],$$
where the expectation is computed under the change of measure $\widetilde{P}(A) =E\left[ 1(A) e^{\alpha S_n}\right]$ for any set $A$ measurable with respect to $\sigma(X_1, \dots, X_n)$. Since under $\widetilde{P}$ the random walk has positive drift, $\widetilde{P}(\tau(t) < \infty) = 1$ and we obtain the estimator:
\[
	Z(t) = e^{-\alpha S_{\tau(t)}}.
\]
This estimator is known to be strongly efficient, in the sense that it has bounded relative error, i.e,
\[
	\limsup_{t \to \infty} \frac{\widetilde{\var}(Z(t))}{P(W>t)^2}  < \infty,
\]
where $\widetilde{\var}(\cdot)$ denotes the variance under $\widetilde{P}$.  Furthermore, since it can be shown that $t/\tau(t) \to \mu = E\left[X_1 e^{\alpha X_1} \right]$ $\widetilde{P}$-a.s., then computing $Z(t)$ requires that we simulate around $t/\mu$ steps of the random walk.
 For further details we refer to Chapter VI in \cite{asmussen2007stochastic}.

Our proposed simulation approach for the branching case follows the same ideas described above. However, the issues we encounter while using a naive Monte Carlo approach are considerably worse, since simulating $k$ generations of a tree requires, in general, an exponential in $k$ number of random variables. 
Observe that in similar situations, the population dynamics algorithm \cite{Aldo_Band_05, Mezard_Montanari_2009, Olv_19} has been used to construct
 dependent samples which still yield strongly consistent estimators. However,
our problem here is that  we are interested in estimating the probability $P(W > t)$ for both moderate and large values of $t$, and in the latter case the size of such samples would again have to be prohibitively large in order to obtain enough observations larger than $t$.

Alternatively, we could try to estimate the expectation in the asymptotic expression
$$P(W > t) \sim \frac{E\left[ e^{\alpha Y} \vee \bigvee_{i=1}^N e^{\alpha(X_i + W_i)} - \sum_{i=1}^N e^{\alpha (X_i + W_i)} \right]}{\alpha \mu} \cdot e^{-\alpha t}, \qquad t \to \infty,$$
provided by Theorem~3.4 in \cite{Jel_Olv_15}, since the population dynamics algorithm could be used to efficiently and accurately estimate it. However, we would still have a bias due to the limit in $t$ that cannot be explicitly computed, despite the availability of convergence rates in the implicit renewal theorem \cite{Jel_Olv_13}.  Instead, our proposed estimator follows the idea behind Siegmund's algorithm and is based on the representation
\[
	P(W > t) = \widetilde{E}\left[ 1({\bfJ}_{\tau(t)} = \gamma(t)) e^{-\alpha V_{\tau(t)}} D_{{\bfJ}_{\tau(t)}}^{-1}  \right], 
\]
derived in Section~\ref{S.Changeofmeasure}.  Note that under Assumption~\ref{Ass1}(a)-(c) we have $\widetilde{P}(\nu(t) \leq \tau(t) < \infty) = 1$, which suggests the estimator
\begin{equation} \label{eq:OurEstimator}
Z(t) = 1({\bfJ}_{\tau(t)} = \gamma(t)) e^{-\alpha V_{\tau(t)}} D_{{\bfJ}_{\tau(t)}}^{-1}  ,
\end{equation}
where the underlying tree $\mathcal{T}$ is simulated under the measure $\widetilde{P}$ up to the stopping time $\tau(t)+1$.

\begin{remark}\label{R.ReduceToSiegmund} \upshape
By the discussion in Remark~\ref{R.MainRemarks}(a), when $Q$ is independent of $(N, C_1, C_2, \ldots)$, the estimator 
\begin{equation}\label{eq:IndepQEstimator}
	Z(t) =  1({\bfJ}_{\tau(t)} = \gamma(t)) e^{-\alpha V_{\tau(t)}}
\end{equation}
is also unbiased for $P(W > t)$. This is the prefered estimator in this case since the $D^{-1}_{{\bfJ}_{\tau(t)}}$ in \eqref{eq:OurEstimator} is an unnecessary, independent source of variability. 
In the case that both $N \equiv 1$ and $Q$ is independent of $C_1$, \eqref{eq:IndepQEstimator} reduces to the estimator in Siegmund's algorithm. 
\end{remark}

As with the non-branching case, we expect the spine to reach level $t$ in about $t/\mu$ steps, or equivalently, $t/\mu$ generations of $\mathcal{T}$. The precise result is stated below; note that its proof is not a straightforward consequence of the  strong law of large numbers due to the presence of the perturbations.

\begin{lemma}\label{L.Tovermu}
Under Assumption~\ref{Ass1}, $\tau(t) \to \infty$ $\widetilde P$-a.s. as $t\to\infty$. In particular, $\tau(t) \sim t/\mu$ as $t \to \infty$ $\widetilde{P}$-a.s.
\end{lemma}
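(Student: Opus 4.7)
The plan is to reduce the statement to a strong law of large numbers for the running maximum
$M(k) = \max_{0 \leq j \leq k}(V_j + \xi_j)$
of the perturbed walk along the spine, since $\tau(t) = \inf\{k \geq 0 : M(k) > t\}$. The first assertion, $\tau(t) \to \infty$ a.s., is then automatic: $M(K)$ is an a.s.\ finite (or $-\infty$) random variable for every fixed $K$, so for $t > M(K)(\omega)$ we have $\tau(t) > K$. The substantive claim is that $M(k)/k \to \mu$ under $\widetilde{P}$, from which $\tau(t) \sim t/\mu$ follows by inverting $M(\tau(t)-1) \leq t < M(\tau(t))$ together with $\tau(t) \to \infty$.

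Two ingredients are needed: a SLLN for $V_k$ and control of the perturbation $\xi_k$. The first is immediate from Lemma~\ref{L.DistOnJ}: $(\hat X_i)_{i \geq 1}$ is i.i.d.\ with finite positive mean $\mu$ under $\widetilde{P}$, so $V_k/k \to \mu$ a.s. For the second, I would first verify the moment estimate
\[
\widetilde{E}[\xi_1^+] \;=\; E\!\left[ (\log^+ Q)\, \sum_{i=1}^N C_i^\alpha \right]
\;\leq\; \frac{1}{\alpha}\, E\!\left[ \sum_{i=1}^N C_i^\alpha \log^+\!\Bigl( Q^\alpha \vee \sum_{i=1}^N C_i^\alpha \Bigr) \right] \;<\; \infty,
\]
where the identity uses the tilt of $\bm{\psi}_{\bfJ_1}$ from Lemma~\ref{L.Newmeasure} and the bound uses Assumption~\ref{Ass1}(e). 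Since the $\xi_k$ ($k \geq 1$) are identically distributed, a first Borel-Cantelli argument applied to the events $\{\xi_k^+ > \epsilon k\}$ for each rational $\epsilon > 0$ gives $\xi_k^+/k \to 0$ a.s., and hence $\max_{j \leq k}\xi_j^+/k \to 0$ a.s.

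The upper bound $\limsup_k M(k)/k \leq \mu$ then follows from $M(k) \leq \max_{j \leq k}V_j + \max_{j \leq k}\xi_j^+$ together with $\max_{j \leq k}V_j/k \to \mu$. For the lower bound, Assumption~\ref{Ass1}(c) together with $P(Q > 0) > 0$ forces $p := \widetilde{P}(\xi_1 \geq -c) > 0$ for some finite $c$, since $\widetilde{P}(Q_{\bfJ_1} \geq \delta) = E[1(Q \geq \delta) \sum_i C_i^\alpha] > 0$ for $\delta > 0$ small. Using that $\{\bm{\psi}_{\bfJ_k}\}_{k \geq 1}$ is i.i.d.\ under $\widetilde{P}$ (a consequence of Lemma~\ref{L.Newmeasure}), the renewal SLLN applied to the indices $T_n$ at which $\xi_{T_n} \geq -c$ gives $T_n/n \to 1/p$ and hence $T_{n+1}/T_n \to 1$. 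Then $M(T_n) \geq V_{T_n} - c$ combined with $V_{T_n}/T_n \to \mu$ yields $M(T_n)/T_n \to \mu$, and monotonicity of $M$ extends this to $\liminf_k M(k)/k \geq \mu$. The main subtlety is the absence of any lower-moment assumption on $\log Q$: since $\xi_k$ may equal $-\infty$, the lower bound cannot be obtained trajectorially from $V_k + \xi_k$, and one must instead work along the random subsequence $T_n$ on which the perturbation is bounded below.
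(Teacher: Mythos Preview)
Your proposal is correct, and the overall architecture matches the paper's: both reduce the claim to a strong law $M(k)/k \to \mu$ for the running maximum of the perturbed spine walk, and both obtain the upper bound from $V_k/k \to \mu$ together with $\xi_k^+/k \to 0$ via Borel--Cantelli and the moment bound $\widetilde E[\xi_1^+] < \infty$ from Assumption~\ref{Ass1}(e).

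The genuine difference is in the lower bound. The paper argues directly by Borel--Cantelli on the events
\[
\Bigl\{ \tfrac{1}{n}\max_{n/\log n \leq k < n}(\mu k + \xi_k)^+ < \mu - \epsilon \Bigr\},
\]
bounding their probabilities by a product $\widetilde P(2\xi_0 < -\epsilon n)^{(\epsilon/2)n-1}$ using independence of the $\xi_k$ and the fact that $\widetilde P(\xi_0 > -\infty) > 0$. You instead pass to the random subsequence $T_n$ of times at which $\xi_{T_n} \geq -c$, use the renewal SLLN (the inter-arrival times are i.i.d.\ geometric since $\{\bm{\psi}_{{\bfJ}_k}\}_{k \geq 1}$ is i.i.d.\ under $\widetilde P$) to get $T_{n+1}/T_n \to 1$, deduce $M(T_n)/T_n \to \mu$ from $M(T_n) \geq V_{T_n} - c$, and then sandwich by monotonicity of $M$. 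Your route is arguably cleaner and, amusingly, is exactly the device the paper itself employs in the proof of Lemma~\ref{L.nuFinite} to show $\tau(t) < \infty$; the paper simply did not reuse it here. The paper's block Borel--Cantelli argument, on the other hand, uses only $\widetilde P(\xi_0 > -\infty) > 0$ and independence, without needing the full renewal SLLN, so it is slightly more self-contained.
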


Just as the estimator in Siegmund's algorithm, our proposed estimator is strongly efficient, although under a strengthened moment condition due to the perturbations.

\begin{lemma}\label{L.BddRelError}
Suppose Assumptions~\ref{Ass1} and \ref{Ass2} hold, so that $H > 0$. If $E\left[Q^{2\alpha}D^{-1}\right] < \infty$, 
then $Z(t)$ as defined by \eqref{eq:OurEstimator} has bounded relative error. If $Q$ is independent of $(N, C_1, C_2, \ldots)$ and $E\left[Q^{2\alpha}\right] < \infty$, then $Z(t)$ in \eqref{eq:IndepQEstimator} has bounded relative error. 
\end{lemma}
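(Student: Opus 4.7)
The plan is to show $\widetilde{E}[Z(t)^2] = O(e^{-2\alpha t})$, which combined with $P(W > t) \sim H e^{-\alpha t}$ from Theorem~\ref{T.Main} and $H > 0$ under Assumption~\ref{Ass2} yields $\widetilde{E}[Z(t)^2]/P(W > t)^2 = O(1)$, the definition of bounded relative error. For a pathwise upper bound, note that on $\{\tau(t) < \infty\}$, which holds $\widetilde{P}$-a.s.~by Lemma~\ref{L.nuFinite}, the defining inequality $V_{\tau(t)} + \xi_{\tau(t)} > t$ gives $e^{-\alpha V_{\tau(t)}} \leq e^{-\alpha t}\,Q_{{\bfJ}_{\tau(t)}}^{\alpha}$, and hence
\[
Z(t)^2 \leq e^{-2\alpha t}\,Q_{{\bfJ}_{\tau(t)}}^{2\alpha}\,D_{{\bfJ}_{\tau(t)}}^{-2}.
\]
It thus suffices to show $\widetilde{E}[Q_{{\bfJ}_{\tau(t)}}^{2\alpha}\,D_{{\bfJ}_{\tau(t)}}^{-2}]$ is bounded uniformly in $t$.

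Next, I decompose over $\{\tau(t) = k\}$ and use the size-biased conditional distribution $\widetilde{P}(\bm{\psi}_{{\bfJ}_k} \in d\bm{\psi}\mid\mathcal{G}_k) = D(\bm{\psi})\,P(d\bm{\psi})$, a consequence of Lemma~\ref{L.Newmeasure} together with the factorization $L_{k+1} = L_k D_{{\bfJ}_k}$. Since $\{\tau(t) \geq k\}$ and $V_k$ are $\mathcal{G}_k$-measurable while $\xi_k = \log Q_{{\bfJ}_k}$ depends on $\bm{\psi}_{{\bfJ}_k}$,
\[
\widetilde{E}\!\left[1(\tau(t) = k)\,Q_{{\bfJ}_k}^{2\alpha}\,D_{{\bfJ}_k}^{-2}\right] = \widetilde{E}\!\left[1(\tau(t) \geq k)\,g(t - V_k)\right],
\]
with $g(r) := E[Q^{2\alpha} D^{-1}\,1(Q > e^r)]$, a non-increasing function with $g(-\infty) = E[Q^{2\alpha}D^{-1}] < \infty$ and $g(+\infty) = 0$. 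Summing over $k$,
\[
\widetilde{E}[Q_{{\bfJ}_{\tau(t)}}^{2\alpha}\,D_{{\bfJ}_{\tau(t)}}^{-2}] = \widetilde{E}\!\left[\sum_{k=0}^{\tau(t)} g(t - V_k)\right].
\]

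The hardest step is bounding this sum uniformly in $t$, and it is here that both the stopping-time cutoff and the positive drift of the spine are essential: dropping the cutoff makes the sum divergent since $g(r) \to E[Q^{2\alpha}D^{-1}] > 0$ as $r \to -\infty$. By Lemma~\ref{L.DistOnJ}, $\{V_k\}$ has i.i.d.~increments with mean $\mu > 0$ under $\widetilde{P}$, and by Lemma~\ref{L.Tovermu}, $\tau(t) \sim t/\mu$ $\widetilde{P}$-a.s. Interchanging sum and expectation via Fubini,
\[
\widetilde{E}\!\left[\sum_{k=0}^{\tau(t)} g(t - V_k)\right] = E\!\left[Q^{2\alpha} D^{-1}\,\widetilde{E}\!\left[\#\{0 \leq k \leq \tau(t): V_k > t - \log Q\}\,\big|\,Q\right]\right],
\]
and the inner counting expectation is to be controlled via a renewal-theoretic argument exploiting the bounded overshoot of $\{V_k\}$ at the first passage of its perturbed version over level $t$, yielding an envelope that is integrable against $Q^{2\alpha}D^{-1}$ and uniform in $t$. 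For the independent-$Q$ case of Remark~\ref{R.ReduceToSiegmund} with estimator~\eqref{eq:IndepQEstimator}, the same scheme applies: the size-biasing does not affect $Q$, the estimator has no $D^{-1}_{{\bfJ}_{\tau(t)}}$ factor, and only $E[Q^{2\alpha}] < \infty$ is required since the analogous function becomes $g(r) = E[Q^{2\alpha}\,1(Q > e^r)]$.
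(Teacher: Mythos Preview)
Your decomposition over $\{\tau(t)=k\}$ and the size-biased conditioning are correct, but the pathwise bound $e^{-2\alpha(V_{\tau(t)}-t)} \le Q_{{\bfJ}_{\tau(t)}}^{2\alpha}$ is where the argument goes wrong. By discarding the factor $e^{-2\alpha(V_{\tau(t)}-t)}$ you lose exactly the decay that makes the renewal argument close: your resulting $g(r)=E[Q^{2\alpha}D^{-1}1(Q>e^r)]$ does not vanish as $r\to -\infty$, so the indicator $1(\tau(t)\ge k)$ cannot be dropped and the two-sided renewal theorem is unavailable. You recognize this and propose to control the counting term $\widetilde{E}\bigl[\#\{k\le\tau(t):V_k>t-\log Q\}\bigr]$ via ``bounded overshoot of $\{V_k\}$ at the first passage of its perturbed version over $t$'', but that overshoot is \emph{not} bounded: since $\tau(t)$ is defined through $V_k+\xi_k$ and the perturbations $\xi_k$ are unbounded below (indeed $\widetilde P(\xi_0=-\infty)>0$ is allowed), the walk $V_k$ can sit far above level $t$ for many steps before $\tau(t)$ occurs. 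Making the counting bound rigorous would require genuine additional work and possibly extra moment hypotheses; as written the step is a gap.

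The paper avoids this entirely by \emph{not} taking your pathwise bound. It works directly with
\[
\widetilde E\!\left[e^{-2\alpha(V_{\tau(t)}-t)}D_{{\bfJ}_{\tau(t)}}^{-2}\right]
\;\le\;\sum_{n\ge 0}\widetilde E\bigl[v(t-V_n)\bigr],
\qquad v(x)=e^{2\alpha x}\,E\!\left[1(Y>x)\,D^{-1}\right],
\]
obtained by the same conditioning you used (this is Lemma~\ref{L.Overshoot} with $\alpha$ replaced by $2\alpha$). The retained factor $e^{2\alpha x}$ makes $v$ d.R.i.\ on $\mathbb{R}$ with $\int v = E[Q^{2\alpha}D^{-1}]/(2\alpha)<\infty$, so the stopping-time indicator can simply be dropped and the two-sided renewal theorem gives $\limsup_t \widetilde E[e^{-2\alpha(V_{\tau(t)}-t)}D_{{\bfJ}_{\tau(t)}}^{-2}]<\infty$ immediately. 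The fix to your argument is therefore to skip the bound $e^{-\alpha V_{\tau(t)}}\le e^{-\alpha t}Q_{{\bfJ}_{\tau(t)}}^{\alpha}$ altogether; everything else you wrote then collapses to the paper's short proof.
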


In Table~\ref{T.Algorithm} we present an algorithm for simulating one copy of $Z(t)$ for fixed $t > 0$. At the start, we assume we have computed the value of $\alpha$ such that $E\left[ \sum_{i=1}^N C_i^\alpha \right] = 1$ as well as the corresponding tilted distribution for the nodes along the spine under $\widetilde{P}$, and that we are capable of simulating $(N, Q, C_1, C_2, \dots)$ both under $P$ and under the tilted measure. 
To distinguish the two distributions, let $\bm{\tilde \psi} = (\tilde N, \tilde Q, \tilde C_1, \tilde C_2, \dots)$ denote a vector having the tilted distribution:
\[
	 \widetilde{P}( \bm{\tilde \psi} \in B) = \widetilde P\left(\bm{\psi}_{\bfi} \in B | {\bfi} = {\bfJ}_k \right) = E\left[ 1(\bm{\psi} \in B) \sum_{i=1}^N C_j^\alpha \right], \qquad B \subseteq \mathbb{N} \times \mathbb{R}^\infty,
\]
and let $\bm{\psi} = (N, Q, C_1, C_2, \dots)$ denote a vector having the original distribution under $P$; the simulation of the tree $\mathcal{T}$ will always be done under $\widetilde P$.

\begin{table}
\caption{Importance Sampling Algorithm}
\begin{center}
\begin{tabular}{ll} \hline
%\multicolumn{2}{l}{\bf Importance Sampling Algorithm} \\ \hline
1: & {\bf Input:} $t > 0$\\
2: & {\bf Output:} A single copy of $Z = 1({\bfJ}_{\tau(t)} = \gamma(t))e^{-\alpha V_{\tau(t)}}D^{-1}_{{\bfJ}_{\tau(t)}}$\\
3: & Generate  $\left( N, Q, C_{1}, \ldots, C_{N} \right) \stackrel{\mathcal{D}}{=} \bm{\tilde \psi}$ \\
4: & Choose $j \in \{1, \ldots, N \}$ w.p. $C_j^\alpha/\sum_{i=1}^{N} C_i^\alpha$ and set ${\bfJ}_1 \leftarrow j $ \\
5: & Set $Y = \log Q$ and $S_{j} \leftarrow \log C_{j}$ for $j = 1, \ldots, N$ \\
6: & Initialize $S_{\emptyset} \leftarrow 0$, $Y_\emptyset \leftarrow Y$,  ${\bfi} \leftarrow \emptyset$, ${\bfJ}_0 \leftarrow \emptyset$ \\
7: & {\bf while}
 $S_{\bfi} + Y_{\bfi} \leq t$ {\bf do} \\
8: & \hspace*{0.2in} Update ${\bfi} \leftarrow \min\{{\bfj} : {\bfi} \prec {\bfj}\}$ \\
9: & \hspace*{0.2in} {\bf if} ${\bfi} = {\bfJ}_{|{\bfi}|}$ {\bf then}\\
10: & \hspace*{0.4in} Generate $\left( N_{\bfi}, Q_{\bfi}, C_{({\bfi}, 1)}, \ldots, C_{({\bfi}, N_{\bfi})} \right) \stackrel{\mathcal{D}}{=} \bm{\tilde \psi}$ \\
11: & \hspace*{0.4in} Choose  $j \in \{1, \ldots, N_{\bfi}\}$ w.p.  $C_{({\bfi},j)}^\alpha / \sum_{i=1}^{N_{\bfi}} C_{({\bfi},i)}^\alpha$, and set ${\bfJ}_{|{\bfi}|+1} = ({\bfJ}_{|{\bfi}|}, j)$\\
12: & \hspace*{0.2in} {\bf else}\\
13: & \hspace*{0.4in} Generate $\left( N_{\bfi}, Q_{\bfi}, C_{({\bfi}, 1)}, \ldots, C_{({\bfi}, N_{\bfi})} \right) \stackrel{\mathcal{D}}{=} \bm{\psi}$\\
14: & \hspace*{0.2in} {\bf end if}\\
15: &  \hspace*{0.2in} Set $Y_{\bfi} \leftarrow \log Q_{\bfi}$ and $S_{({\bfi}, j)} \leftarrow S_{\bfi} + \log C_{({\bfi}, j)}$ for $j = 1, \ldots, N_{\bfi}$\\
16: & {\bf end while}\\
17: & {\bf if} ${\bfi} = {\bfJ}_{|{\bfi}|}$ {\bf then} \\
18: & \hspace*{0.2in} Set $Z \leftarrow e^{-\alpha S_{\bfi}} / \sum_{i=1}^{N_{\bfi}} C^\alpha_{({\bfi}, i)}$\\
19: & {\bf else} \\
20: & \hspace*{0.2in} Set $Z \leftarrow 0$\\
21: & {\bf end if} \\
22: & Output $Z$ \\ \hline
\end{tabular}
\end{center}
\label{T.Algorithm}
\end{table}

\subsection{Examples}

We now illustrate the use of our proposed simulation algorithm by providing some examples for which both the random vectors $\boldsymbol{\psi}$ and $\boldsymbol{\tilde \psi}$ can be easily simulated.  The particular form of the change of measure poses a simulation challenge since the tilt introduces dependence between $N$ and the $\{ C_i\}$ even if none exists under $P$.  We start with three generic approaches for simulating $\boldsymbol{\tilde \psi}$ and then provide more concrete examples.

\begin{example}[Acceptance-rejection for bounded $C$'s, part I] \label{Ex:BoundedCs} \upshape When the $C_i$ are a.s. bounded, an acceptance-rejection algorithm based on the original distribution of $\boldsymbol{\psi}$ under $P$ can be employed to generate a sample of $\boldsymbol{\tilde \psi}$. Suppose that $C_i \le b_i$ a.s.~for each $i$ and note that
\[
	\widetilde P(\tilde N = n) = E\left[ 1(N=n) \sum_{i=1}^N C_i^\alpha \right] = P(N=n) \sum_{i=1}^n E\left[\left. C_i^\alpha \right| N = n \right], 
\]
so that 
\begin{align*}
	&\widetilde P(\tilde Q \in dy,  \tilde N=n, \tilde C_1\in dx_1, \dots, \tilde C_n \in dx_n) \\
	&= P(N=n) E\left[ \left.1(Q\in dy, C_1\in dx_1, \dots, C_n \in dx_n) \sum_{i=1}^n C_i^\alpha \right| N=n \right] \\
	&= P(N=n)\left(\sum_{i=1}^n E[C_i^\alpha|N=n]\right) \cdot \frac{ E\left[ \left.1(Q\in dy, C_1\in dx_1, \dots, C_n \in dx_n) \sum_{i=1}^n C_i^\alpha \right| N=n \right] }{\sum_{i=1}^n E[C_i^\alpha|N=n]} \\
	&= \widetilde P( \tilde N = n)  \cdot \frac{ E\left[ \left.1(Q\in dy, C_1\in dx_1, \dots, C_n \in dx_n) \sum_{i=1}^n C_i^\alpha \right| N=n \right] }{\sum_{i=1}^n E[C_i^\alpha|N=n]}. 
\end{align*}
Thus, the conditional density of $(\tilde Q, \tilde C_1, \ldots, \tilde C_n)$ given $\tilde N=n$ can be dominated as follows\textcolor{applegreen}{:}
\begin{align*}
	 f_{\tilde Q, \tilde C_1,\ldots, \tilde C_n| \tilde N=n}(y, x_1, \ldots, x_n) &= \frac{\sum_{i=1}^n x_i^\alpha}{\sum_{i=1}^n E[C_i^\alpha|N=n]} f_{Q, C_1,\ldots,C_n|N=n}(y, x_1, \ldots, x_n) \\
	&\le \frac{\sum_{i=1}^n b_i^\alpha}{\sum_{i=1}^n E[C_i^\alpha|N=n]} f_{Q, C_1,\ldots,C_n|N=n}(y, x_1, \ldots, x_n),
\end{align*}
where $f_{Q, C_1,\ldots,C_n|N=n}$ denotes the conditional density of $(Q, C_1, \ldots, C_n)$ given $N=n$ with respect to $P$. Hence, after obtaining $\tilde N = n$ by simulation, an observation of $(\tilde Q, \tilde C_1, \ldots, \tilde C_n)$ can be obtained by using an acceptance-rejection procedure where we simulate $U$ from a $\mbox{Uniform}[0,1]$ distribution and $(Q, C_1, \dots, C_n)$ according to $f_{Q, C_1, \dots, C_n|N= n}$, independent of each other, and then set $(\tilde Q, \tilde C_1, \dots, \tilde C_n) = (Q, C_1, \dots, C_n)$ if
\[
	U \leq \frac{\sum_{i=1}^n C_i^\alpha}{\sum_{i=1}^n b_i^\alpha}.
\]
The acceptance probability given $\tilde N = n$ is $\left( \sum_{i=1}^n b_i^\alpha \right)^{-1}$.

\end{example}

\begin{example}[Acceptance-rejection for bounded $C$'s, part II]  \upshape Suppose that rather than having each of the $C_i$ be bounded individually, we have that $D = \sum_{i=1}^N C_i^\alpha \leq b$ a.s. Now let $Z\sim$ Pareto($a,1$) be independent of $(N, Q, C_1, \dots, C_N)$, and let $f_{Q, C_1, \dots, C_n|N = n}$ be the conditional density of $(Q, C_1, \dots, C_n)$ given $N = n$ with respect to $P$, as in Example~\ref{Ex:BoundedCs}. Now note that the conditional density of $(\tilde Q, \tilde C_1, \ldots, \tilde C_n)$ given $\tilde N=n$ satisfies
\begin{align*}
f_{\tilde Q, \tilde C_1,\ldots, \tilde C_n| \tilde N=n}(y, x_1, \ldots, x_n) &=  \frac{E\left[ \left. 1(Q\in dy, C_1 \in dx_1, \dots, C_n \in dx_n) D \right| N = n \right]}{E[D|N=n]} \\
&= \frac{E\left[ \left. 1(Q\in dy, C_1 \in dx_1, \dots, C_n \in dx_n) b^{-1} D \right| N = n \right]}{E\left[ \left.b^{-1}D\right| N=n \right] } \\
&= \frac{E\left[ \left. 1(Q\in dy, C_1 \in dx_1, \dots, C_n \in dx_n) 1(Z^a > b/D ) \right| N = n \right]}{P(Z^a > b/D | N=n) },
\end{align*}
where we have used the observation that $P(Z^a > b/D  | D) = D/b$ and $P(Z^a > b/D|N=n) = E[D/b|N=n]$. Therefore, after simulating $\tilde N = n$, we can obtain $(\tilde Q, \tilde C_1, \dots, \tilde C_n)$ by generating $Z\sim$ Pareto($a,1$) and $(Q, C_1, \dots, C_n)$ according to $f_{Q, C_1, \dots, C_n|N=n}$, independent of each other,  and then setting $(\tilde Q, \tilde C_1, \dots, \tilde C_n) = (Q, C_1, \dots, C_n)$ if $Z > (b/D)^{1/a}$. The acceptance probability given $\tilde N = n$ is $P(Z > (b/D)^{1/a}) = b^{-1}$.
\end{example}

\begin{example}\label{Ex:Mixture}[A mixture representation] \label{E.InversionTransf} \upshape  The change of measure induces a mixture density in the following way. 
If $\alpha > 0$ is such that $E\left[\sum_{i=1}^N C_i^\alpha \right] = 1$, then define the values $\{p_{i,n}, i \le n, n \in \mathbb{N} \}$ by
\[
	p_{i,n} = \frac{E[ C_i^\alpha |N=n]}{\sum_{j=1}^n E[ C_j^\alpha|N=n]} \in [0,1].
\]
Then,
\begin{align*}
	&\widetilde P( \tilde N=n, \tilde Q \in dy, \tilde C_1\in dx_1,\dots,  \tilde C_n \in dx_n) \\
	&= E\left[1(N=n, Q\in dy, C_1\in dx_1, \dots,  C_n \in dx_n)\sum_{i=1}^N C_i^\alpha\right] \\
	&= P(N=n) \sum_{i=1}^n E[1(C_i \in dx_i )C_i^\alpha| N=n]P(Q\in dy, C_j \in dx_j, j \ne i | C_i = x_i, N=n) \\
	&=: \widetilde P(\tilde N=n)\sum_{i=1}^n p_{i,n}  \tilde f_{i,n} (x_i) P(Q\in dy, C_j \in dx_j, j \ne i | C_i = x_i, N=n),
\end{align*}
where
\[
	\tilde f_{i,n}(x) = \frac{E[1(C_i \in dx) C_i^\alpha | N=n]}{E[ C_i^\alpha|N=n]} = \frac{x^\alpha f_{i,n}(x)}{E[C_i^\alpha | N = n]}
\]
is the tilted marginal density of $\tilde C_i$ conditional on $\tilde N=n$, while $f_{i,n}$ is the marginal density of $C_i$ conditional of $N = n$ under $P$.

Suppose now that $\tilde f_{i,n}$ specifies a 
distribution that can be efficiently simulated, and that it is possible to simulate the vector $(Q, C_1, \dots, C_{i-1}, C_{i+1}, \dots C_n)$ given $\{C_i = x, N = n \}$ under $P$. Then, conditional on $\tilde N = n$, the tilted vector $(\tilde Q, \tilde C_1, \ldots, \tilde C_n)$ can be simulated by picking $i \in \{1, \ldots, n\}$ according to the distribution $\{p_{i,n} : 1\le i \le n\}$, generating $\tilde C_i$ according to $\tilde f_{i,n}$, and then generating $\{\tilde Q, \tilde C_j, j\ne i\}$ according to the conditional distribution of $\{Q,  C_j, j \neq i\}$ given  $\{ C_i, N \}$ under $P$.

Consider the special case when the $\{C_i\}$ are i.i.d. and $N$, $Q$, and $\{C_i\}$ are mutually independent. Then, 
\begin{equation} \label{eq:SizeBias}
	\widetilde P(\tilde N=n) = E\left[1(N=n) \sum_{j=1}^N C_j^\alpha\right] = \frac{nP(N=n)}{E[N]},  \quad n \geq 1,
\end{equation}
since $\alpha$ is such that $E[N]E[C_1^\alpha] = 1$. Hence, under the tilt, $\tilde N$ is the sized-biased version of $N$. Furthermore, $p_{i,n} = 1/n$, $\tilde f_{i,n} = \tilde f$ and $f_{i,n} = f$ for all $i$ and $n$ and some densities $\tilde f$, $f$. So upon simulating $\tilde N$ according to the size-biased distribution, the $\{\tilde C_1, \dots, \tilde C_n\}$ can be simulated by picking $i \in \{1, \ldots, n\}$ uniformly at random, simulating $\tilde C_i$ according to
$$\tilde f(x) = \frac{x^\alpha f(x)}{E[C_1^\alpha]},$$
and simulating the rest of the $\{ \tilde C_j: j \neq i\}$ according to $f$. In this case the distribution of $Q$ is invariant under the tilt. 
\end{example}

Having now described three general methods for simulating the generic branching vector $\boldsymbol{\tilde \psi}$ under the tilt induced by measure $\widetilde{P}$ for nodes along the spine, we now give some more concrete examples that lead to explicit distributions for both $\boldsymbol{\psi}$ and $\boldsymbol{\tilde \psi}$.

\begin{example}\label{Ex:MM1}[The branching version of the $M/M/1$ queue] \upshape As mentioned earlier, the special case of \eqref{eq:HighOrderLindley} when $N \equiv 1$ corresponds to the Lindley equation satisfied by the single-server queue. In particular, if we choose $X = \chi - \tau$ where $\chi$ and $\tau$ are exponentially distributed and independent of each other, we obtain the $M/M/1$ queue. This choice of $X$ is known to be closed under the change of measure induced by $\widetilde P$, in the sense that it remains a difference of two exponentials (but with different rates). As one would expect, this canonical example for the non-branching case is also valid in the branching one. Specifically, suppose that the $\{C_i\}_{i \geq 1}$ are i.i.d.~and independent of $N$, with each of the $C_i = e^{\chi_i - \tau_i}$, where the $\{ (\chi_i, \tau_i) \}_{i \geq 1}$ are i.i.d.~copies of $(\chi, \tau)$, with $\chi$ and $\tau$ exponentially distributed and independent of each other.

Suppose $\tau$ has rate $\lambda$ and $\chi$ has rate $\theta$, for which we have:
$$f(x) =   \frac{\theta \lambda }{\lambda+\theta} \left( x^{\lambda-1}1(x < 1) + x^{-\theta-1} 1(x \geq 1)\right), \qquad x \in (0, \infty),$$
in Example~\ref{E.InversionTransf}. Then, we can simulate $(\tilde N, \tilde C_1, \dots, \tilde C_N)$ under $\widetilde P$ by first simulating $\tilde N$ according to the size-biased distribution of $N$ \eqref{eq:SizeBias}, then pick an index $i \in \{1, \dots, \tilde N\}$ uniformly at random, simulate each of the $\{\tilde C_j: j \neq i\} \stackrel{\mathcal{D}}{=} \{ C_j: j \neq i\}$ through an inversion transform for each of the $\tau_j$ and $\chi_j$, and then simulate $\tilde C_i = e^{\chi_i - \tau_i}$ according to the tilted density given by:
$$\tilde f(x) =  \frac{x^\alpha f(x)}{E[C_1^\alpha]} =   \frac{(\theta-\alpha)(\lambda+\alpha)}{\theta+\lambda} \left( x^{\alpha+\lambda-1} 1(x < 1) + x^{-(\theta-\alpha)-1} 1(x \geq 1) \right) , \qquad x \in (0, \infty),$$
which corresponds to simulating $\chi_i \sim\mbox{Exponential}(\theta-\alpha)$ and $\tau_i\sim\mbox{Exponential}(\lambda+\alpha)$,  independent of each other.
\end{example}

\begin{example}[Identical $C$'s] \label{Ex:EqualCs} \upshape We now give three examples for which $C_i \equiv C$ for all $i \geq 1$.

\begin{enumerate}[(a)]

\item Suppose $Q, N, C$ are mutually independent, $C \sim \mbox{Pareto}(a, b)$ with shape $a$ and scale $b$, and $E[N] < b^{-\alpha}$, where the Cram\'er-Lundberg root $\alpha$ solves $\alpha = (1 - E[N]b^\alpha)a$. Then under $\widetilde P$, $\tilde Q \stackrel{\mathcal{D}}{=} Q$ is invariant and remains independent of $(\tilde N, \tilde C)$, the law of $\tilde N$ is the  sized-biased distribution given by \eqref{eq:SizeBias},
and $\tilde C$ is again Pareto independent of $\tilde N$ and $\tilde Q$, but with shape $a - \alpha$ and scale $b$.

\item Suppose $Q$ is independent of $(N, C)$, $C \sim \mbox{Exponential}(\lambda)$, and conditional on $C$, $N \sim \mbox{Poisson}(C) + 1$.
Then after tilting, the law of $\tilde Q$ remains invariant, $\tilde Q$ remains independent of $\tilde N$ and $\tilde C$, $\tilde N$ has mass function
\[
	\widetilde P(\tilde N = n) = \frac{n\lambda \Gamma(n+\alpha)}{(n-1)! (\lambda+1)^{n+\alpha}}, \quad n \ge 1,
\]
and conditional on $\tilde N$, we have $\tilde C \sim \mbox{Gamma}(\tilde N + \alpha, \lambda + 1)$.

\item Suppose that $Q \sim \mbox{Gamma}(2, \beta)$, with shape $2$ and rate $\beta$, and $N \sim \mbox{Geometric}(1/2)$ with support on $\mathbb{N}_+$, are independent, and conditional on $(N, Q)$, $C \sim \mbox{Gamma}(N + 1, 2Q)$.
Under the tilt, $\tilde Q \sim \mbox{Gamma}(2 - \alpha, \beta)$, conditional on $\tilde Q$, $\tilde C \sim \mbox{Gamma}(\alpha + 2, \tilde Q)$, and conditional on $(\tilde Q, \tilde C)$, $\tilde N \sim \mbox{Poisson}(\tilde Q \tilde C) + 1$.

\end{enumerate}
\end{example}

\begin{example}\label{Ex:ConSimplex}[$C$'s on the $N$-simplex] \label{Ex:Dirichlet} \upshape Let $B \sim\mbox{Gamma}(a,b)$, with shape $a$ and rate $b$, let $N$ have an arbitrary distribution that is independent of $B$, and let $N$ and $(\beta_1, \ldots, \beta_N)$ be such that
\[
	\sum_{i=1}^N \beta_i  = 1.
\]
For example, conditional on $N$, $(\beta_1, \ldots, \beta_N) \sim \mbox{Dirichlet}(\bm{\theta})$ for some concentration parameters $\bm{\theta} = (\theta_1, \ldots, \theta_N)$, i.e., each $\beta_i$ has a marginal $\mbox{Beta}\left(\theta_i, \sum_{k=1}^N \theta_k - \theta_i\right)$ distribution.
Then let $\alpha$ be such that $E[B^\alpha] = 1$, let $C_i = B\beta_i^{1/\alpha}$ for $1\le i \le N$, and let $Q$ be arbitrarily distributed independent of everything else. Then $\alpha$ is the Cram\'er-Lundberg root since
\[
	E\left[\sum_{i=1}^N C_i^\alpha \right] = E\left[ \sum_{i=1}^N B^\alpha \beta_i \right] = E[B^\alpha] = 1.
\]
Under $\widetilde P$ the vector $\boldsymbol{\tilde \psi} = (\tilde N, \tilde Q, \tilde C_1, \tilde C_2, \dots)$ remains in the same family of distributions, i.e., the marginal laws of $\tilde N$ and $\tilde Q$ are invariant, $B$ is tilted to $\tilde B \sim \mbox{Gamma}(a + \alpha, b)$, and the $\tilde C_i$ are constructed in the same way using the same $\beta_i$. In this case, it is the particular dependence of $N$ on $\{C_i\}$ that ensures the invariance of $\tilde N$, since
\[
	\widetilde P(\tilde N=n) = E\left[1(N=n) \sum_{i=1}^N C_i^\alpha \right] = E[1(N=n)B^\alpha] = P(N=n).
\]

\end{example}

\begin{example}[Constant $N$ and discrete $C$'s]\rm
The case when $N\equiv n_0$ is constant and the $C_i$, $i=1,\ldots,n_0$,  take discrete values is even simpler. Note that
in this case
\[\widetilde P( \tilde C_1=c_1, \ldots, \tilde C_{n_0}=c_{n_0})=P(C_1=c_1, \ldots, C_{n_0}=c_{n_0})(c_1^\alpha+\ldots +c_{n_0}^\alpha).\]
For example, taking $N \equiv 2$ and
$P\left((  C_1,C_2) = \left(\frac{2}{3},0\right)\right)=\frac{3}{4}=1-P((C_1,C_2) = (1,1))$
we have %$(\frac{2}{3}+0)\frac{3}{4}+(1+1)\frac{1}{4}=1$, hence
$\alpha = 1$ and %then
\[\widetilde P\left(( \tilde C_1, \tilde C_2) = \left(\tfrac{2}{3},0\right)\right)=\frac{1}{2}=\widetilde P\left((\tilde C_1,\tilde C_2) = (1,1)\right).\]
\end{example}

\subsection{Numerical experiments}
\begin{table}[t]
\caption{Numerical results for the branching M/M/1 queue, sample size 10,000}
\begin{center}
\begin{tabular}{|rrrrrrr|} \hline 
\multicolumn{7}{|l|}{Branching $M/M/1$ queue: $\alpha = 4.374$, $\mu = 1.383$} \\
$t$ & $\qquad\bar Z(t)$ & \qquad Std. Err. & $\qquad t/\mu$ & \begin{tabular}{@{}c@{}}Terminal gen.  \end{tabular} & Time & \begin{tabular}{@{}c@{}}Prop. nonzero \end{tabular}  \\ \hline
     0.5    &      0.037774 &    0.001241 &    0.36 &      1.39   &       0.002610 &      0.967 \\   
         1    &    0.003025 &    0.000123 &    0.72 &      1.78      &    0.007702 &      0.980 \\   
      1.5    &   0.000354 &    1.07147e-05 &     1.08 &      2.16   &        0.017536 &      0.983 \\   
       2    &    3.90110e-05  &  1.43477e-06   &  1.45 &      2.52      &    0.029310 &      0.983 \\   
      2.5    &    4.11873e-06  &   1.16323e-07  &    1.81 &     2.90 &       0.065747 &      0.985 \\ \hline   
\end{tabular}
\label{T.MM1}
\end{center}
\end{table}
Here we implement two examples of the importance sampling algorithm. The first is the branching M/M/1 queue of Example~\ref{Ex:MM1}, in which we let $\chi$ have rate $5$ and $\tau$ have rate $1/4$, and we let $N$ be a truncated Poisson random variable with mean $2$, i.e. $N \overset{\mathcal{D}}{=} K|K>0$, where $K \sim \mbox{Poisson}(2)$. 
In this case $\alpha = 4.374$, and we include a perturbation $Y \sim \mbox{Exponential}(9)$ independent of $(N, \{C_i\})$, so that $Q$ is a Pareto random variable with enough moments to ensure $E[Q^{2\alpha}] < \infty$ and provide our estimator with bounded relative error (see Lemma~\ref{L.BddRelError}). Under the tilt, $N$ has its size-biased distribution $\mbox{Poisson}(2) + 1$, and the $\{C_i\}$ are simulated as described in Example~\ref{Ex:Mixture}, by picking one uniformly at random and applying an exponential tilt. Since $Q$ is independent of $(N, \{C_i\})$, we use the estimator in \eqref{eq:IndepQEstimator}
(See Remarks~\ref{R.MainRemarks}(a) and \ref{R.ReduceToSiegmund}). 
In Table~\ref{T.MM1} we show the numerical results, which include for a range of $t$ values the sample average $\bar Z(t)$ based on 10,000 copies of $Z(t)$ and the standard error in the estimate. Additionally, we give the average tree generation $\tau(t) + 1$ at which the algorithm terminates, the value of $t/\mu$ for comparison, the average time in seconds to generate one copy of $Z(t)$, and the fraction of the estimates that are nonzero. 

\begin{table}[b]
\caption{Numerical results for $C$'s on the $N$-simplex, sample size 10,000}
\begin{center}
\begin{tabular}{|rrrrrrr|} \hline
\multicolumn{7}{|l|}{$C$'s on the $N$-simplex: $\alpha = 3.328$, $\mu = 0.995$} \\
$t$ & $\qquad\bar Z(t)$ & \qquad Std. Err. & $\qquad t/\mu$ & \begin{tabular}{@{}c@{}}Terminal gen.  \end{tabular} & Time & \begin{tabular}{@{}c@{}}Prop. nonzero \end{tabular}  \\ \hline
      1.5   &       0.015785 &    0.000166 &    1.51  &    0.33   &   0.000235    &  0.998   \\
        2  &      0.003611 &   3.51666e-05  &   2.01  &    0.78   &        0.000311  &    0.994   \\
      2.5  &       0.000613 &   6.60663e-06  &   2.51  &    1.33  &    0.000439   &   0.994   \\
        3  &       0.000116 & 1.21042e-06   &  3.01   &   1.84  &    0.000671   &   0.994     \\
      3.5   &    2.29240e-05 &    2.35959e-07  &   3.52   &   2.33 &              0.001058   &  0.992  \\ \hline
\end{tabular}
\label{T.Simplex}
\end{center}
\end{table}

Figure~\ref{F.LogprobplotsMM1} shows a plot of $\log \bar Z(t)$ compared with the tail asymptotic $\log(He^{-\alpha t})$ over the range of values in the table, where $H$ is computed using the population dynamics algorithm \cite{Olv_19}. As can be seen, the distribution becomes indistinguishable from the tail asymptotic somewhere in the middle of this range. The terminal generation of each estimator does not converge to $t/\mu$ as quickly; the terminal generations listed are greater than $t/\mu$ despite the perturbation $Q \ge 1$ which in this case can only cause the process $S_{\bfi} + Y_{\bfi}$ to reach the level $t$ earlier than the random walk $S_{\bfi}$.
The large fraction of estimates that are nonzero in this example indicates that in almost all iterations, the level $t$ was first reached on the spine of the tree. In the case of i.i.d. $C$'s, only one offspring of each node on the spine is chosen for the tilt, and this titled branch is then the most likely to be the next step in the spine, making the event that a path off the spine hits $t$ first unlikely. 

\begin{figure}[t]
	\centering 
	\begin{subfigure}[t]{0.45\textwidth}
		\centering 
		\begin{picture}(200,200)(0,0)
			\put(3,10){\includegraphics[scale=0.585]{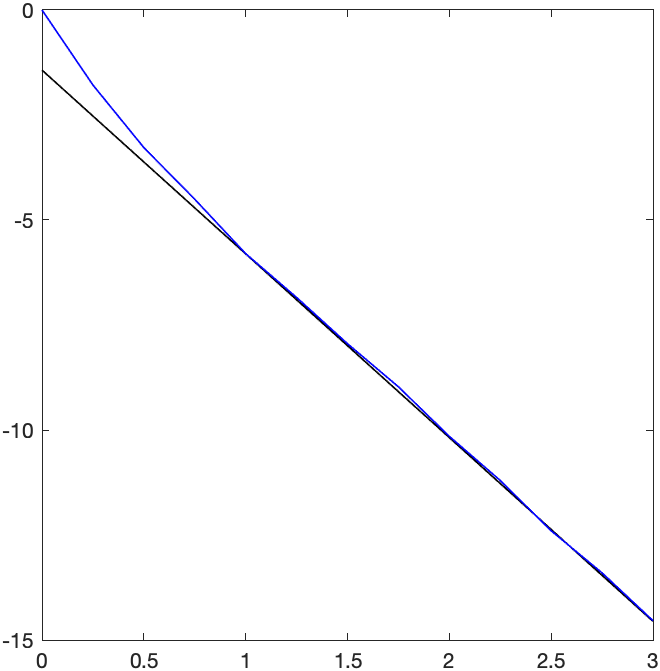}}
			\put(-6,80){\rotatebox{90}{\footnotesize $\log P(W>t)$}}
			\put(100,-0){\footnotesize $t$}
		\end{picture}
		\caption{Log probabilities for the branching $M/M/1$ queue, $H = 0.2390$.}
		\label{F.LogprobplotsMM1}
	\end{subfigure}%
	~
	\begin{subfigure}[t]{0.45\textwidth}
		\centering
		\begin{picture}(200,200)(0,0)
			\put(3,10){\includegraphics[scale=0.585]{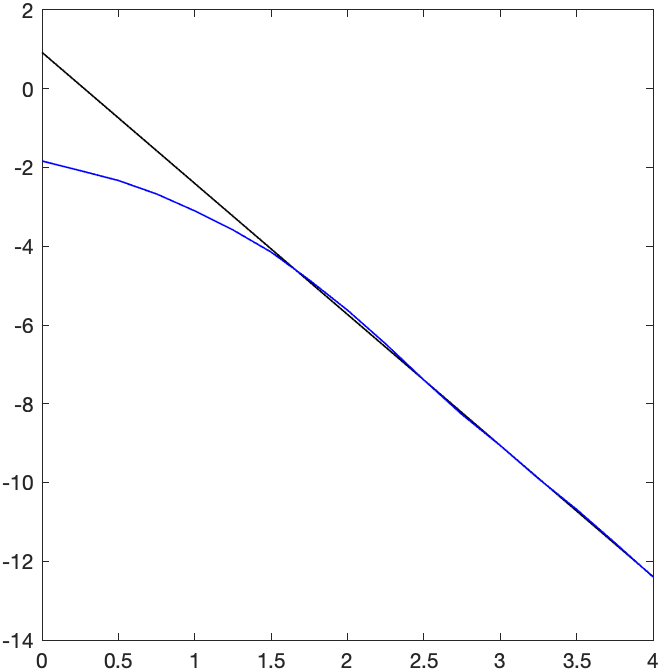}}
			\put(-6,80){\rotatebox{90}{\footnotesize $\log P(W>t)$}}
			\put(100,-0){\footnotesize $t$}
		\end{picture}
		\caption{Log probabilities for $C$'s on the $N$-simplex, $H = 2.5180$.}
		\label{F.LogprobplotsSimplex}
	\end{subfigure}
	\caption{Log probability plots estimated two ways: black lines are the logs of asymptotic approximations $H e^{-\alpha t}$ and blue lines are the logs of the estimates $\bar Z(t)$.}
\end{figure}

The second experiment is for $C$'s on the $N$-simplex, as in Example~\ref{Ex:ConSimplex}. We choose $B \sim \mbox{Gamma}(1/4, 1)$, %with $\alpha = 3.328$ 
set $\alpha$ so that $E[B^\alpha] = 1$, and then let $C_i = B\beta_i^{1\alpha}$, where the $\{\beta_i : 1\le i \le N\}$ are $\mbox{Dirichlet}(1,\ldots,1)$ random variables conditional on $N$. We let $N$ be uniform over $\{1,2,3\}$ independently of $B$, and we take $Q = 2B$, so that the perturbation is positively correlated with the $\{C_i\}$.
As noted in Example~\ref{Ex:ConSimplex}, the change of measure tilts $B$, the distribution of $N$ is the same, and $(Q, \{C_i\})$ is generated the same way through $B$ and $\{\beta_i\}$. Because of the dependence between $Q$ and $\{C_i\}$, the estimator we use is in its most general form \eqref{eq:OurEstimator}. 
Table~\ref{T.Simplex} gives the numerical results for a range of $t$ values and a sample size of 10,000 for each, and Figure~\ref{F.LogprobplotsSimplex} is a plot of $\log \bar Z(t)$ and $\log (He^{-\alpha t})$, where again $H$ is obtained using the population dynamics algorithm. 

Again in this experiment, almost every estimator terminated on the spine of the tree. Unlike in the i.i.d. $\{C_i\}$ case, the tilted $B$ influences the $C$'s of every offspring of a node on the spine, so one might expect a significant percentage of estimators to terminate on nodes off the spine but which have the spine in their recent ancestry. We do not see this likely because $Q = 2B$ also is made larger by the tilt but only on the spine, and it seems that it is the perturbation $Y_{\bfi}$ which is causing the process $S_{\bfi} + Y_{\bfi}$ to reach the level $t$ in almost every iteration. This tilted perturbation is likely also causing the terminal tree generation to be smaller than $t/ \mu$ for these values of $t$, as it is greater than zero with high probability.

In each example, there is a range of $t$ values in the tail of the distribution for which $P(W>t)$ is not yet indistinguishable from the asymptotic behavior. 
The importance sampling algorithm presented here provides an efficient method of simulating tail probabilities for these intermediate values of $t$ before the asymptotic behavior dominates.

\section{Proofs}\label{S.Proofs}

In this section we provide the proofs to all of our results. To ease its reading, we start first with the proofs of Lemmas~\ref{L.Newmeasure} and \ref{L.DistOnJ}, which describe the distribution of the tree $\mathcal{T}$ under $\widetilde P$. We then give the proof of our main theoretical result, Theorem~\ref{T.Main}, followed  by the proofs of Lemmas~\ref{L.Tovermu} and \ref{L.BddRelError} which are related to our importance sampling estimator. Finally, we end the paper with a short proof of Theorem~1 in \cite{Araman} for the non-branching, bounded $Q$ case (Theorem~\ref{T.BoundedQNonBranching}). Throughout the remainder of the paper we assume that Assumption~\ref{Ass1} holds for some $\alpha > 0$.

\subsection{The distribution of $\mathcal{T}$ under $\widetilde{P}$}

We start with the proof of Lemma~\ref{L.Newmeasure}, which provides the distribution of the generic branching vectors defining the weighted tree $\mathcal{T}$. The distribution for vectors on the spine is different under $\widetilde P$ and $P$, whereas that of vectors off the spine remains the same.

\begin{proof}[Proof of Lemma \ref{L.Newmeasure}]	
We will start by deriving an expression for the joint distribution of the vectors $\bm{\psi}_{\bfi}$ along then spine. To do this, fix ${\bfi} \in \mathbb{N}_+^k$ and let $B_0, B_1, \dots, B_k \subseteq \mathbb{N} \times \mathbb{R}^\infty$ be measurable sets. Next, note that the event $\{ \bm{\psi}_{{\bfi} |r} \in B_r\,, r= 0,\ldots, k; \,{\bfJ}_k ={\bfi} \}$ is measurable with respect to $\mathcal{G}_{k+1}$, and therefore,
\begin{align*}
&\widetilde{P}\left( \bm{\psi}_{{\bfi} |r} \in B_r\,, r= 0,\ldots, k; \,{\bfJ}_k ={\bfi}  \right)  \\
&= E\left[ 1\left( \bm{\psi}_{{\bfi} |r} \in B_r\,, r= 0,\ldots, k; \,{\bfJ}_k ={\bfi}  \right) L_{k+1} \right] \\
&= E\left[ E\left[ \left. 1\left( \bm{\psi}_{{\bfi} |r} \in B_r\,, r= 0,\ldots, k; \,{\bfJ}_k ={\bfi}  \right) L_{k+1} \right| \mathcal{G}_k \right] \right] \\
&= E\left[ 1\left( \bm{\psi}_{{\bfi} |r} \in B_r\,, r= 0,\ldots, k-1; \,{\bfJ}_{k} = {\bfi}  \right)  L_{k}  E\left[ \left. 1\left( \bm{\psi}_{\bfi} \in B_k  \right) D_{\bfi}  \right| \mathcal{G}_{k} \right] \right] \\
&= E\left[ 1\left( \bm{\psi}_{{\bfi} |r} \in B_r\,, r= 0,\ldots, k-1; \,{\bfJ}_{k} = {\bfi}  \right)  L_{k}   \right] E\left[ 1(\bm{\psi} \in B_k) D \right] \\
&= E\left[ 1\left( \bm{\psi}_{{\bfi} |r} \in B_r\,, r= 0,\ldots, k-2; \,{\bfJ}_{k-1} = ({\bfi}|k-1)  \right) L_{k-1} \right. \\
&\hspace{5mm} \left. \times E\left[ \left. 1\left( \bm{\psi}_{{\bfi} |k-1} \in B_{k-1}; \,{\bfJ}_{k} = {\bfi}  \right)  D_{{\bfJ}_{k-1}}  \right| \mathcal{G}_{k-1} \right]  \right]  E\left[ 1(\bm{\psi} \in B_k) D \right].
\end{align*}
Now note that by letting ${\bfj} = ({\bfi}|k-1)$ we obtain
\begin{align*}
&E\left[ \left. 1\left( \bm{\psi}_{{\bfi}|k-1} \in B_{k-1}; \,{\bfJ}_k ={\bfi}  \right) D_{{\bfJ}_{k-1}}  \right| \mathcal{G}_{k-1} \right]  \\
&= E\left[ 1( \bm{\psi}_{\bfj} \in B_{k-1}; \text{ offspring $i_k$ of ${\bfj}$ is chosen}) D_{\bfj} \right] \\
&= E\left[ 1( \bm{\psi}_{\bfj} \in B_{k-1}, N_{\bfj} \geq i_k) \cdot \frac{C_{({\bfj}, i_k)}^\alpha}{D_{\bfj}}  \cdot D_{\bfj} \right]  \\
&= E\left[ 1(\bm{\psi} \in B_{k-1}, \, N \geq i_{k}) C_{i_k}^\alpha \right].
\end{align*}
It follows that
\begin{align*}
&E\left[ 1\left( \bm{\psi}_{{\bfi} |r} \in B_r\,, r= 0,\ldots, k-1; \,{\bfJ}_{k} = {\bfi}  \right)  L_{k}   \right]  \\
&= E\left[ 1\left( \bm{\psi}_{{\bfi} |r} \in B_r\,, r= 0,\ldots, k-2; \,{\bfJ}_{k-1} = ({\bfi}|k-1)  \right)  L_{k-1} \right]  E\left[ 1(\bm{\psi} \in B_{k-1}, \, N \geq i_{k}) C_{i_k}^\alpha \right]   \\
&= E\left[ 1\left( \bm{\psi}_\emptyset \in B_0;  \,{\bfJ}_{1} = i_1  \right)  L_{1} \right]  \prod_{r=2}^{k} E\left[ 1(\bm{\psi} \in B_{r-1}, \, N \geq i_{r}) C_{i_r}^\alpha \right]  \\
&= \prod_{r=1}^{k} E\left[ 1(\bm{\psi} \in B_r, \, N \geq i_{r}) C_{i_r}^\alpha \right].
\end{align*}
We conclude that
\begin{equation} \label{eq:NewMeasure}
\widetilde{P}\left( \bm{\psi}_{{\bfi} |r} \in B_r\,, r= 0,\ldots, k; \,{\bfJ}_k ={\bfi}  \right)  = E\left[ 1(\bm{\psi} \in B_k) D \right] \prod_{r=1}^{k} E\left[ 1(\bm{\psi} \in B_{r-1}, \, N \geq i_{r}) C_{i_r}^\alpha \right].
\end{equation}

In particular, by setting $B_r = \mathbb{N} \times \mathbb{R}^\infty$ for all $r = 0, 1, \dots, k$, we obtain the first expression in the statement of the lemma, i.e.,
\begin{equation*}
\widetilde{P}( {\bfJ}_k = {\bfi} ) = \prod_{r=1}^{k} E[ 1(N \geq i_{r}) C_{i_r}^\alpha ].
\end{equation*}
Similarly, by setting $B_k = B$ and $B_r = \mathbb{N} \times \mathbb{R}^\infty$ for all $r = 0, 1, \dots, k-1$, we obtain the third expression:
\begin{align*}
\widetilde{P}( \bm{\psi}_{\bfi} \in B | {\bfi} = {\bfJ}_k) &= E[1(\bm{\psi} \in B) D] = E\left[ 1((N, Q, C_1, C_2, \dots) \in B) \sum_{j=1}^N C_j^\alpha \right].
\end{align*}

To obtain the corresponding expression for nodes off the spine (the second expression in the statement of the lemma) note that the same conditioning approach used for a node on the spine gives, for any ${\bfi} \in \mathbb{N}_+^k$ and any measurable $B \subseteq \mathbb{N} \times \mathbb{R}^\infty$,
\begin{align*}
\widetilde{P}\left( \bm{\psi}_{\bfi} \in B; {\bfJ}_k \neq {\bfi} \right) &= E\left[ E\left[ \left. 1\left( \bm{\psi}_{\bfi} \in B; {\bfJ}_k \neq {\bfi} \right) L_{k+1} \right| \mathcal{G}_k \right] \right] \\
&= E\left[ L_k E\left[ \left. 1\left( \bm{\psi}_{\bfi} \in B; {\bfJ}_k \neq {\bfi} \right) D_{{\bfJ}_k}  \right| \mathcal{G}_k \right] \right] \\
&= E\left[ L_k E[ 1(\bm{\psi}_{\bfi} \in B) | \mathcal{G}_k] E\left[ \left. 1\left( {\bfJ}_k \neq {\bfi} \right) D_{{\bfJ}_k}  \right| \mathcal{G}_k \right] \right] \\
&= P(\bm{\psi} \in B) E\left[ 1({\bfJ}_k \neq {\bfi}) L_{k+1} \right]  \\
&= P(\bm{\psi} \in B) \widetilde{P}( {\bfJ}_k \neq {\bfi}).
\end{align*}
Therefore,
$$\widetilde{P}\left( \bm{\psi}_{\bfi} \in B | {\bfJ}_k \neq {\bfi} \right) = P(\bm{\psi} \in B) = P((N, Q, C_1, C_2, \dots) \in B).$$
The conditional independence of the vectors $\{ \bm{\psi}_{\bfi}: {\bfi} \in A_k\}$ given $\mathcal{G}_{k-1}$ follows from the branching property under $P$. This completes the proof.
\hfill\end{proof}

We now give the proof of Lemma~\ref{L.DistOnJ}, which gives the distribution of the random walk defined by the nodes along the spine.

\begin{proof}[Proof of Lemma \ref{L.DistOnJ}]
Recall that $\hat X_k = X_{{\bfJ}_k} = \log C_{{\bfJ}_k}$ and $\xi_k = Y_{{\bfJ}_k} = \log Q_{{\bfJ}_k}$. By conditioning on all possible paths that could be chosen to define ${\bfJ}_k$ we obtain
\begin{align*}
&\widetilde P\left( \hat X_1 \leq x_1, \dots, \hat X_k \leq x_k, \, \xi_k \leq y \right) \\
&= \sum_{{\bfi} \in \mathbb{N}_+^k} \widetilde P\left( \hat X_1 \leq x_1, \dots, \hat X_k \leq x_k, \, \xi_k \leq y, \, {\bfJ}_k = {\bfi}\right ) \\
&= \sum_{{\bfi} \in \mathbb{N}_+^k} \widetilde P\left( C_{{\bfi}|1} \leq e^{x_1}, \dots, C_{{\bfi}|k}  \leq e^{x_k}, \, Q_{{\bfi}|k} \leq e^y, \, {\bfJ}_k = {\bfi} \right) \\
&= \sum_{{\bfi} \in \mathbb{N}_+^k} \widetilde P(Q \leq e^y) \prod_{r=1}^{k} E\left[ 1( C_{i_r} \leq e^{x_r}, N \geq i_r) C_{i_r}^\alpha \right] \\
&= \widetilde P(Q \leq e^y)  \sum_{i_1 =1}^\infty E\left[ 1( C_{i_1} \leq e^{x_1}, \, N \geq i_1) C_{i_1}^\alpha \right] \cdots \sum_{i_k =1}^\infty  E\left[ 1( C_{i_k} \leq e^{x_k}, \, N \geq i_k) C_{i_k}^\alpha \right] \\
&= E\left[ 1(Q \leq e^y) \sum_{i=1}^N C_i^\alpha \right] \prod_{r=1}^k G(x_r),
\end{align*}
where in the third equality we used \eqref{eq:NewMeasure} and the independence of $Q_{{\bfi}|k}$ and $\bm{\psi}_{{\bfi}|k-1}$. To compute the mean of the $\hat X_i$'s 
note that
\begin{align*}
\widetilde{E}\left[ |\hat X_1| \right] &= \int_{-\infty}^\infty |x| G(dx) = \int_{-\infty}^\infty |x| E\left[ \sum_{i=1}^N 1(\log C_i \in dx) C_i^\alpha \right] = E\left[ \sum_{i=1}^N C_i^\alpha |\log C_i|  \right].
\end{align*}
Now choose $0 < \beta < \alpha$ such that $E\left[ \sum_{i=1}^N C_i^\beta \right] < 1$ and note that since $\log^- C_i = 0$ when $C_i > 1$, 
\begin{align*}
\widetilde{E}\left[ \hat X_1^- \right] &= E\left[ \sum_{i=1}^N C_i^\alpha \log^- C_i  \right] \leq \sup_{0 \leq x \leq 1} x^{\alpha-\beta} |\log x| \widetilde{E}\left[ \sum_{i=1}^N C_i^\beta\right] < \infty.
\end{align*}
For the positive part note that by Assumption~\ref{Ass1}(a) we have $E\left[ \sum_{i=1}^N C_i^\alpha \log C_i \right] \in (0,\infty)$, and therefore
$$\widetilde{E}\left[ \hat X_1^+ \right] = E\left[ \sum_{i=1}^N C_i^\alpha \log C_i \right] + \widetilde{E}\left[ \hat X_1^- \right] < \infty.$$
Finally, since both $\widetilde{E}\left[ \hat X_1^- \right]$ and $\widetilde{E}\left[ \hat X_1^+ \right]$ are finite, we have $\widetilde{E}\left[ \hat X_1 \right] = E\left[ \sum_{i=1}^N C_i^\alpha \log C_i \right]  \in (0, \infty)$. 
\hfill\end{proof}

\subsection{Proof of Theorem~\ref{T.Main}}

We now move on to the proof of our main theorem, which is obtained by applying renewal theory to compute the limit of
$$\widetilde{E}\left[ 1({\bfJ}_{\tau(t)} = \gamma(t)) e^{-\alpha(V_{\tau(t)} - t)}D_{{\bfJ}_{\tau(t)}}^{-1}  \right].$$
The proof will rely on several preliminary results, the first of which establishes the almost sure finiteness of $\tau(t)$ under $\widetilde{P}$.  Throughout this subsection we assume all parts of Assumption~\ref{Ass1} hold.

\begin{lemma} \label{L.nuFinite}
For any $y\in \mathbb{R}$, $\tau(y) <\infty$ $\widetilde P$-a.s.
\end{lemma}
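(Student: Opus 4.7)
The plan is to combine the strong law of large numbers for the random walk along the spine with a Borel--Cantelli argument that keeps the perturbation $\xi_k$ from being too negative infinitely often, so that eventually $V_k+\xi_k$ must exceed any finite level $y$.

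First, I would invoke Lemma~\ref{L.DistOnJ}, which under $\widetilde{P}$ gives that the $\{\hat X_k\}_{k\ge 1}$ are i.i.d.\ with finite mean $\mu=\widetilde E[\hat X_1]\in(0,\infty)$.  Applying the strong law of large numbers yields $V_k/k\to\mu>0$, hence $V_k\to\infty$ $\widetilde P$-a.s.  In particular, for any constants $y\in\mathbb{R}$ and $M>0$, there is a $\widetilde P$-a.s.\ finite random index $K$ such that $V_k>y+M$ for all $k\ge K$.

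Next, I would establish that $\{\xi_k\}_{k\ge 0}$ is an i.i.d.\ sequence under $\widetilde{P}$ with common distribution $\widetilde P(\xi_0\le y)=E[1(Q\le e^y)D]$.  This follows by an extension of \eqref{eq:NewMeasure}: summing over ${\bfi}\in\mathbb{N}_+^k$ with $B_r=B$ chosen arbitrarily and using Assumption~\ref{Ass1}(a), one gets
\[
\widetilde P\bigl(\bm{\psi}_{\mathbf{J}_0}\in B_0,\ldots,\bm{\psi}_{\mathbf{J}_k}\in B_k\bigr) \;=\; \prod_{r=0}^{k}E\bigl[1(\bm{\psi}\in B_r)D\bigr],
\]
and since $\xi_r$ is a measurable function of $\bm{\psi}_{\mathbf{J}_r}$, this gives the i.i.d.\ property.

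Now fix $y\in\mathbb{R}$.  Since $1(Q>e^{-M})\uparrow 1(Q>0)$ as $M\to\infty$, monotone convergence gives $E[1(Q>e^{-M})D]\to E[1(Q>0)D]$.  The limit is strictly positive: by Assumption~\ref{Ass1}(c) we have $P(D>0)=1$, and $P(Q>0)>0$ by assumption, so on the event $\{Q>0\}$ the random variable $1(Q>0)D$ is strictly positive with positive probability.  Thus I can choose $M$ so large that $\widetilde P(\xi_0>-M)=E[1(Q>e^{-M})D]>0$.  By the i.i.d.\ property and the second Borel--Cantelli lemma, $\xi_k>-M$ infinitely often $\widetilde P$-a.s.

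Combining the two facts, on an event of full $\widetilde P$-measure there exists $k\ge K$ with $\xi_k>-M$, and for any such $k$, $V_k+\xi_k>(y+M)+(-M)=y$, so $\tau(y)<\infty$.  The main subtlety is the step of choosing $M$ so that $\widetilde P(\xi_0>-M)>0$; this is where Assumption~\ref{Ass1}(c) is essential, since without $P(D>0)=1$ the tilted measure might place all its mass on atoms with $Q=0$ and $\xi_0\equiv-\infty$ would prevent the spine from ever crossing level $y$.
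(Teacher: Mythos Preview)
Your proof is correct and follows essentially the same strategy as the paper: use the strong law of large numbers for $\{V_k\}$ under $\widetilde P$ together with the fact that $\widetilde P(\xi_0>-M)>0$ for some $M$ (which is where Assumption~\ref{Ass1}(c) enters), and then argue that $\xi_k>-M$ infinitely often so that $V_k+\xi_k$ eventually exceeds any level. The paper phrases the last step via the subsequence of times $T_k=\inf\{i>T_{k-1}:\xi_i>\log\epsilon\}$ and shows $\liminf (V_{T_k}+\xi_{T_k})/T_k\ge\mu$, whereas you use the second Borel--Cantelli lemma directly; these are equivalent, and your explicit verification that $\{\bm{\psi}_{\mathbf{J}_r}\}_{r\ge0}$ (hence $\{\xi_r\}$) is i.i.d.\ under $\widetilde P$ by summing \eqref{eq:NewMeasure} over ${\bfi}$ is a useful addition that the paper leaves implicit.
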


\begin{proof}
The sequence $\{V_k\}$ satisfies the strong law of large numbers
\[
 V_n / n \rightarrow  \mu >0 \quad \widetilde P \mbox{-a.s.}
 \]
Note that Assumption~\ref{Ass1}(c) ensures that $\xi_0$ has the same support under $\widetilde{P}$ that under $P$, hence, since $P(Q > 0) > 0$, there must exist an $\epsilon > 0$ such that $P(Q > \epsilon) > 0$, and therefore, $\widetilde{P}(Q > \epsilon) > 0$. Hence, under $\widetilde{P}$, the random times $T_0 = \inf\{ i \geq 0: \xi_i > \log \epsilon\}$ and $T_{k+1} = \inf\{i > T_k: \xi_i > \log \epsilon\}$ are finite $\widetilde{P}$-a.s.~for all $k \geq 0$; moreover, $T_k \to +\infty$ $\widetilde{P}$-a.s.~as $k \to \infty$. Focusing on subsequences along the indexes $\{T_k : k \geq 0\}$ gives
\begin{align*}
\liminf_{k \to \infty} \frac{V_{T_k} + \xi_{T_k}}{T_k} \geq \liminf_{k \to \infty} \frac{V_{T_k} +\log \epsilon}{T_k} = \mu \qquad \widetilde{P}\text{-a.s}
\end{align*}
Since 
$$\sup_{n \geq 0}\,  (V_n + \xi_n ) \geq \sup_{k \geq 0} \, ( V_{T_k} + \xi_{T_k} ) \qquad \widetilde{P}\text{-a.s.},$$
and $\widetilde{P}\left( \sup_{k \geq 0} ( V_{T_k} + \xi_{T_k} ) > y \right) = 1$ for all $y$, the result follows. 
\hfill\end{proof}

The next thing we need to establish is an upper bound for $\widetilde{E}\left[ e^{-\alpha(V_{\tau(t)} -t )} D_{{\bfJ}_{\tau(t)}}^{-1} \right]$, since this quantity will appear in various places throughout the proof of Theorem~\ref{T.Main}.

\begin{lemma} \label{L.Overshoot}
Under Assumption~\ref{Ass1}, we have for any $t \in \mathbb{R}$,
\begin{equation}\label{eq:BoundU}
\widetilde{E}\left[ e^{-\alpha(V_{\tau(t)} - t)} D_{{\bfJ}_{\tau(t)}}^{-1} \right] \leq \sum_{n=0}^\infty \widetilde{E}\left[ u(t-V_n) \right],
\end{equation}
where $u(x)  = e^{\alpha x} P\left( Y > x  \right)$ is directly Riemann integrable (d.R.i.). Moreover,
\begin{equation}\label{eq:FiniteLimsup}
\limsup_{t \to \infty} \widetilde{E}\left[ e^{-\alpha(V_{\tau(t)} - t)} D_{{\bfJ}_{\tau(t)}}^{-1} \right] \leq \frac{ E[Q^\alpha]}{\alpha \mu}.
\end{equation}
\end{lemma}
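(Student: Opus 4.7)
The plan is to first derive the pointwise bound \eqref{eq:BoundU} by a direct decomposition plus a cancellation using the tilt distribution on the spine, then check that $u$ is directly Riemann integrable, and finally invoke the key renewal theorem.

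Step one: since $\{\tau(t)=n\}\subseteq\{V_n+\xi_n>t\}$, I would write
\begin{align*}
\widetilde{E}\!\left[e^{-\alpha(V_{\tau(t)}-t)}D_{{\bfJ}_{\tau(t)}}^{-1}\right]
&=\sum_{n=0}^{\infty}\widetilde{E}\!\left[1(\tau(t)=n)\,e^{-\alpha(V_n-t)}D_{{\bfJ}_n}^{-1}\right]\\
&\le\sum_{n=0}^{\infty}\widetilde{E}\!\left[1(\xi_n>t-V_n)\,e^{-\alpha(V_n-t)}D_{{\bfJ}_n}^{-1}\right].
\end{align*}
By Lemma~\ref{L.Newmeasure}, conditional on $\mathcal{G}_n$ the branching vector $\bm{\psi}_{{\bfJ}_n}$ (which carries both $\xi_n=\log Q_{{\bfJ}_n}$ and $D_{{\bfJ}_n}$) is independent of $V_n\in\mathcal{G}_n$ and has distribution $E[1(\cdot)D]$; Assumption~\ref{Ass1}(c) makes $D_{{\bfJ}_n}^{-1}$ well-defined $\widetilde{P}$-a.s. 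Therefore
$$\widetilde{E}\!\left[\left.1(\xi_n>t-V_n)\,D_{{\bfJ}_n}^{-1}\,\right|\,V_n\right]=E\!\left[1(Y>t-V_n)\,D^{-1}\cdot D\right]=P(Y>t-V_n),$$
and after multiplying by $e^{\alpha(t-V_n)}$ and summing in $n$, \eqref{eq:BoundU} follows.

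Step two: for d.R.i.\ I would apply the standard Riemann-sum criterion at mesh $h=1$. On $(-\infty,0]$ the bound $u(x)\le e^{\alpha x}$ gives a summable geometric tail; on $[0,\infty)$ the monotonicity of $P(Q>e^x)$ yields $\sup_{x\in[n,n+1]}u(x)\le e^{\alpha(n+1)}P(Q>e^n)$, and by Fubini together with a geometric-sum estimate
$$\sum_{n\ge0}e^{\alpha n}P(Q>e^n)=E\!\left[\sum_{n:\,e^n<Q}e^{\alpha n}\right]\le\frac{e^{\alpha}}{e^{\alpha}-1}E[Q^\alpha]<\infty$$
by Assumption~\ref{Ass1}(b). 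Combined with the right-continuity of $u$, this establishes d.R.i., and a second Fubini step gives $\int_{\mathbb{R}}u(x)\,dx=E[Q^\alpha]/\alpha$.

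Step three: Lemma~\ref{L.DistOnJ} shows that under $\widetilde{P}$, $\{V_n\}$ is a random walk with i.i.d.\ increments of mean $\mu\in(0,\infty)$ and step distribution $G$, which is non-arithmetic by Assumption~\ref{Ass1}(d). The key renewal theorem then yields
$$\sum_{n=0}^{\infty}\widetilde{E}[u(t-V_n)]\;\longrightarrow\;\frac{1}{\mu}\int_{\mathbb{R}}u(x)\,dx=\frac{E[Q^\alpha]}{\alpha\mu}\qquad\text{as }t\to\infty,$$
and combining with \eqref{eq:BoundU} delivers \eqref{eq:FiniteLimsup}. The only nontrivial piece is really the book-keeping in Step one: one must identify the conditional law of $(\xi_n,D_{{\bfJ}_n})$ under $\widetilde{P}$ precisely enough that the factor $D_{{\bfJ}_n}^{-1}$ cancels exactly against the spine tilt $D$, reducing the overshoot expectation to a clean renewal functional of $V_n$ alone. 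Once this cancellation is in place, the d.R.i.\ check and the renewal-theoretic limit are routine consequences of Assumption~\ref{Ass1}(b) and (d).
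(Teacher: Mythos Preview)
Your proposal is correct and follows essentially the same route as the paper: decompose over $\{\tau(t)=n\}$, condition on $\mathcal{G}_n$ so that the spine tilt cancels $D_{{\bfJ}_n}^{-1}$ and yields $u(t-V_n)$, verify d.R.i., and apply renewal theory. The only cosmetic differences are that the paper bounds the upper Riemann sums of $u$ by $e^{2\alpha h}\int u$ in one stroke rather than splitting at zero, and it cites the two-sided renewal theorem of Athreya--McDonald--Ney explicitly (needed since $u$ lives on all of $\mathbb{R}$), whereas you call it the key renewal theorem.
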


\begin{proof}
Note that
\begin{align*}
&\widetilde{E}\left[ e^{-\alpha(V_{ \tau(t)} - t)} D_{{\bfJ}_{\tau(t)}}^{-1} \right] \\
&= \widetilde{E}\left[ e^{\alpha t} D_{{\bfJ}_0}^{-1} 1(\xi_0 > t) \right] + \sum_{n=1}^\infty \widetilde{E}\left[ 1\left( \max_{0 \leq k \leq n-1} V_k + \xi_k \leq t < V_n + \xi_n  \right) e^{-\alpha (V_n - t)} D_{{\bfJ}_n}^{-1} \right] \\
&= \widetilde{E}\left[ e^{\alpha t} D_{{\bfJ}_0}^{-1} 1(\xi_0 > t) \right] \\
&\quad +  \sum_{n=1}^\infty \widetilde{E}\left[ 1\left( \max_{0 \leq k \leq n-1} V_k + \xi_k  \leq t \right) e^{-\alpha(V_n-t)} \widetilde{E}\left[ \left. 1\left( V_n + \xi_n > t  \right) D_{{\bfJ}_n}^{-1} \right| \mathcal{G}_n \right]  \right] \\
&\leq \sum_{n=0}^\infty \widetilde{E} \left[ u(t-V_n) \right],
\end{align*}
where 
$$u(x) = e^{\alpha x} \widetilde{E}\left[ 1(\xi_0 > x) D_{{\bfJ}_0}^{-1} \right] = e^{\alpha x} P(Y > x).$$
We will now show that $u$ is d.R.i.~on $(-\infty, \infty)$, and we start by proving that $u$ is integrable. To see this note that 
\[
\int_{-\infty}^\infty u(x) dx = E\left[ \int_{-\infty}^\infty e^{\alpha x} 1(Y > x)\,dx \right]
= \frac{E[Q^\alpha]}{\alpha} < \infty.
\]
Now note that for any $h > 0$,
\begin{align*}
\sum_{n=-\infty}^\infty \sup_{y \in (nh, (n+1)h]} u(y) &\leq \sum_{n=-\infty}^\infty  e^{\alpha (n+1)h} P(Y > nh) \\
&\leq \sum_{n=-\infty}^\infty \int_{(n-1)h}^{nh} e^{2\alpha h} e^{\alpha x} P(Y > x) dx \\
&= e^{2\alpha h} \int_{-\infty}^\infty u(x) dx < \infty,
\end{align*}
so by Proposition~4.1(ii) in Chapter V of \cite{Asm2003}, $u$ is d.R.i.

To complete the proof use the two-sided renewal theorem (see Theorem~4.2 in \cite{Ath_McD_Ney_78}), to obtain
\begin{align*}
\lim_{t \to \infty} \sum_{n=0}^\infty \widetilde{E}\left[ u(t-V_n) \right] &= \frac{1}{\mu} \int_{-\infty}^\infty u(x) dx = \frac{ E[Q^\alpha]}{\alpha \mu}. 
\end{align*}
\hfill\end{proof}

\begin{cor}\label{L.unifbnd}
There exists a constant $0 < B < \infty$ such that for any $t \in \mathbb{R}$, 
$$e^{\alpha t} P(W > t) \leq B \, e^{-\alpha (-t)^+}.$$
\end{cor}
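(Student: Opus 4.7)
The asserted inequality $e^{\alpha t} P(W > t) \leq B e^{-\alpha(-t)^+}$ is equivalent to the single uniform bound
$P(W > t) \leq B \, e^{-\alpha t^+}$ for all $t\in\mathbb{R}$, since $t+(-t)^+ = t^+$. Thus the plan is to prove the latter by splitting into the two regimes $t\le 0$ and $t>0$, and then take $B$ to be the maximum of the constants produced in each regime.

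For $t\le 0$, the bound reduces to $P(W>t)\le B$, which holds trivially for any $B\ge 1$. The main work is therefore concentrated on the regime $t>0$, where we must produce a genuine Cram\'er--Lundberg upper bound $P(W>t)\le B e^{-\alpha t}$. For this I would start from the key identity \eqref{eq:ISEstimator}: dropping the indicator $1({\bfJ}_{\tau(t)}=\gamma(t))\le 1$ (which is legitimate because by Lemma~\ref{L.nuFinite} we have $\tau(t)<\infty$ $\widetilde{P}$-a.s.) yields
\[
 e^{\alpha t} P(W>t) \;\le\; \widetilde{E}\!\left[ e^{-\alpha(V_{\tau(t)}-t)} D_{{\bfJ}_{\tau(t)}}^{-1}\right].
\]
Then Lemma~\ref{L.Overshoot}, specifically \eqref{eq:FiniteLimsup}, gives $\limsup_{t\to\infty} \widetilde{E}[e^{-\alpha(V_{\tau(t)}-t)} D_{{\bfJ}_{\tau(t)}}^{-1}] \le E[Q^\alpha]/(\alpha\mu) < \infty$. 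Consequently there exist $t_1>0$ and $C_1<\infty$ such that $e^{\alpha t} P(W>t) \le C_1$ for all $t\ge t_1$.

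To close off the remaining bounded regime $t\in(0,t_1]$ I would use the trivial estimate $P(W>t)\le 1$, which gives $e^{\alpha t} P(W>t)\le e^{\alpha t_1}$. Taking $B=\max\{1,\, C_1,\, e^{\alpha t_1}\}$ then yields the desired uniform inequality on all of $\mathbb{R}$. There is no serious technical obstacle here; the only point that requires a moment's care is the case analysis and the observation that the finite limsup from Lemma~\ref{L.Overshoot} combined with boundedness of $e^{\alpha t}$ on any compact interval is what allows us to promote the asymptotic bound into a uniform one.
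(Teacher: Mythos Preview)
Your proof is correct and follows essentially the same approach as the paper's: both split into the regimes $t\le 0$, $0<t\le t_0$, and $t>t_0$, use the trivial bound $P(W>t)\le 1$ on the first two regimes, and invoke the finite limsup from Lemma~\ref{L.Overshoot} (equation~\eqref{eq:FiniteLimsup}) together with \eqref{eq:ISEstimator} for large $t$. The only cosmetic difference is that you first rewrite the claim as $P(W>t)\le B e^{-\alpha t^+}$ via the identity $t+(-t)^+=t^+$, whereas the paper works directly with the original formulation.
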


\begin{proof} By \eqref{eq:FiniteLimsup}, there is a $t_0 > 0$ such that $\sup_{t\ge t_0} \widetilde E \left[ e^{-\alpha (V_{\tau(t)}-t)} D^{-1}_{{\bfJ}_{\tau(t)}}\right] \leq 2E[Q^\alpha]/(\alpha \mu) < \infty$. Since for $t \leq 0$ we have the trivial bound $e^{\alpha t} P(W > t) \leq e^{-\alpha(-t)^+}$, and for $t \geq t_0$ we have $e^{\alpha t} P(W > t) \leq \widetilde E \left[ e^{-\alpha (V_{\tau(t)}-t)} D^{-1}_{{\bfJ}_{\tau(t)}}\right] \leq 2E[Q^\alpha]/(\alpha\mu)$, we can take $B = \max\left\{e^{\alpha t_0}, 2E[Q^\alpha]/(\alpha\mu) \right\}$ to obtain the stated inequality. 
\hfill\end{proof}

We are now ready to move on to the application of the Markov renewal theorem.

\subsubsection{The Markov Renewal Theorem}

Let $\mathcal{S}^{(m)}$ denote the state space of weighted trees of height $m$ having a path identified as its spine, and define  the Markov chain $\{M_n^{(m)} : n\ge m \}$ in $\mathcal{S}^{(m)}$ as follows:
$$M_m^{(m)} = \left\{ \boldsymbol{\psi}_{\bfi}: {\bfi} \in \bigcup_{k=0}^{m-1} A_k \right\} \cup \left\{ {\bfJ}_0, {\bfJ}_1, \dots, {\bfJ}_m \right\},$$
and 
$$M_{m+n}^{(m)} = \left\{ \boldsymbol{\psi}_{\bfi}: {\bfi} \in \mathcal{T}_{n}^{(m)} \right\} \cup \left\{ {\bfJ}_n, {\bfJ}_{n+1}, \dots, {\bfJ}_{n+m} \right\}$$
for $n > 0$. Recall that 
$$\mathcal{T}_n^{(m)} = \bigcup_{k=0}^{m-1} A_{k,{\bfJ}_n}\qquad \text{and} \qquad A_{k,{\bfi}} = \{ ({\bfi}, {\bfj}) \in \mathcal{T}: |{\bfj}| = k\}.$$

Note that with this notation we can write
\begin{align*}
&\widetilde{E}\left[ 1({\bfJ}_{\tau(t)} = \gamma(t)) e^{-\alpha(V_{\tau(t)} - t)} D_{{\bfJ}_{\tau(t)}}^{-1}  \right] \\
&= \sum_{n=0}^{m-2} \widetilde{E}\left[ 1(|\gamma(t)| = n, {\bfJ}_{\tau(t)} = \gamma(t)) e^{-\alpha(V_{\tau(t)} - t)} D_{{\bfJ}_{\tau(t)}}^{-1}  \right] \\
&\hspace{5mm} + \widetilde{E}\left[ 1(|\gamma(t)| \geq m-1, {\bfJ}_{\tau(t)} = \gamma(t)) e^{-\alpha(V_{\tau(t)} - t)} D_{{\bfJ}_{\tau(t)}}^{-1}  \right] \\
&=  \sum_{n=0}^{m-2} \widetilde{E}\left[ 1( {\bfJ}_{n} = \gamma(t)) e^{-\alpha(V_{n} - t)} D_{{\bfJ}_{n}}^{-1}  \right]  + \widetilde{E}\left[ K\left( M_m^{(m)}, t \right)  \right] ,
\end{align*}
where
$$K\left(M_m^{(m)}, t\right) := \widetilde{E}\left[  \left. 1(|\gamma(t)| \geq m-1, \gamma(t) = {\bfJ}_{\tau(t)} ) e^{-\alpha(V_{\tau(t)} - t)} D_{{\bfJ}_{\tau(t)}}^{-1} \right| M_m^{(m)} \right].$$

The key idea behind the proof of Theorem~\ref{T.Main} is that $\widetilde{E}\left[ K\left( M_m^{(m)}, t \right)  \right]$ can be analyzed using the Markov renewal theorem (Theorem~2.1 in \cite{Alsmeyer_94}). However, the use of this theorem is not immediate, since, as mentioned earlier, the perturbations make it difficult to identify clear regeneration points. In comparison, when the perturbations are not random (i.e., $Q$'s are constant), it suffices to focus on the generations where the ladder heights of the random walk $\{V_k: k \geq 0\}$ occur, since the crossing of level $t$ can only happen at these times. To solve this problem, our approach relies on the observation that although the crossing of level $t$ does not need to coincide with a ladder height of $\{V_k: k \geq 0\}$, and the perturbed (branching) random walk does not regenerate when the ladder heights of $\{V_k+\xi_k: k \geq 0\}$ occur, we can ignore the effect of the perturbations by looking at a long enough stretch of the history of the branching random walk along its spine. This history is what the Markov chain $\{M_{m+n}^{(m)}: n \geq 0\}$ includes. 

Our first technical result in this section will define a function that will appear in the derivation of a lower bound for $\widetilde{E}\left[ K\left( M_m^{(m)}, t \right)  \right]$. Before we state it, we will need to define the following random variables. 
We use
\begin{equation} \label{eq:Subtrees}
W_{\bfi} := \bigvee_{r =0}^\infty \bigvee_{({\bfi}, {\bfj}) \in A_{|{\bfi}|+r}} \left( S_{({\bfi}, {\bfj})} - S_{\bfi}  + Y_{({\bfi},{\bfj})} \right), \qquad {\bfi} \in \mathcal{T},
\end{equation}
to define the maximum of the perturbed branching random walk rooted at node ${\bfi}$. Note that if ${\bfi}$ is not part of the spine, then $W_{\bfi}$ has the same distribution under both $\widetilde{P}$ and $P$. Now use the $W_{\bfi}$ to define
$$Z_k := \xi_k \vee \max_{({\bfJ}_k,i) \neq {\bfJ}_{k+1}} \left( S_{({\bfJ}_k, i)} - V_k + W_{({\bfJ}_k, i)} \right), \qquad k \geq 0.$$
Note that the $Z_k$ is the maximum of the perturbation at the spine node ${\bfJ}_k$ and all the branching random walks that are rooted at sibling nodes of ${\bfJ}_k$. Intuitively, since under $\widetilde{P}$ only the spine has positive drift, the probability that any path that coalesces with the spine outside of $\mathcal{T}_k^{(m)}$ has a very small chance of ever reaching level $t$ for sufficiently large $t$ and $m$.  The function $h_{m}$ will be used to quantify how rare this event is.

\begin{lemma} \label{L.h_m}
Under Assumption~\ref{Ass1}, the function 
$$h_m(x) =  \widetilde{E}\left[ 1(Z_0 > x) e^{-\alpha(V_{m+1}-x)^+} \right]$$
is d.R.i.~on $\mathbb{R}$ for any $m \geq 2$, and satisfies
$$\int_{-\infty}^\infty h_{m}(x) \, dx = \widetilde{E}\left[ \frac{e^{-\alpha (V_{m+1}-Z_0)^+}}{\alpha} + (Z_0 - V_{m+1})^+ \right]  < \infty.$$
\end{lemma}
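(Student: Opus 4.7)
The plan is to compute the integral explicitly, verify its finiteness, and then establish direct Riemann integrability (d.R.i.).

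For the identity, Fubini (valid since the integrand is nonnegative) gives
\[
\int_{-\infty}^\infty h_m(x)\,dx = \widetilde{E}\!\left[\int_{-\infty}^{Z_0} e^{-\alpha(V_{m+1}-x)^+}\,dx\right].
\]
On $\{Z_0 \le V_{m+1}\}$ the inner integral equals $\alpha^{-1}e^{-\alpha(V_{m+1}-Z_0)}$; on $\{Z_0 > V_{m+1}\}$, splitting the range at $V_{m+1}$ yields $\alpha^{-1} + (Z_0 - V_{m+1})$. In either case the value is $\alpha^{-1}e^{-\alpha(V_{m+1}-Z_0)^+} + (Z_0 - V_{m+1})^+$, matching the displayed identity.

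For finiteness the first summand is bounded by $1/\alpha$, so it suffices to show $\widetilde{E}[(Z_0 - V_{m+1})^+] < \infty$. The pointwise bound $(Z_0 - V_{m+1})^+ \le Z_0^+ + V_{m+1}^-$ together with Lemma~\ref{L.DistOnJ} (which gives $\widetilde{E}[V_{m+1}^-] \le (m+1)\widetilde{E}[\hat X_1^-] < \infty$) reduces this to showing $\widetilde{E}[Z_0^+] < \infty$. Decompose $Z_0^+ \le \xi_0^+ + \max_{i \ne {\bfJ}_1}(X_i + W_i)^+$. Here $\widetilde{E}[\xi_0^+] = E[D(\log Q)^+] \le \alpha^{-1} E[D\log^+(Q^\alpha)] < \infty$ by Assumption~\ref{Ass1}(e). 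For the maximum, Lemma~\ref{L.Newmeasure} and the branching property imply that under $\widetilde{P}$ the variables $\{W_i : i \ne {\bfJ}_1\}$ are conditionally i.i.d.~given $\mathcal{G}_1$, with the $P$-distribution of the endogenous $W$, and are independent of $\mathcal{G}_1$. Combining a union bound with the tail estimate $P(W > s) \le B e^{-\alpha s}$ ($s \ge 0$) from Corollary~\ref{L.unifbnd} yields
\[
\widetilde{P}\!\left(\max_{i \ne {\bfJ}_1}(X_i + W_i) > t \,\Big|\, \mathcal{G}_1\right) \le \min\!\left(1,\, B(D - C_{{\bfJ}_1}^\alpha) e^{-\alpha t}\right), \qquad t \ge 0.
\]
Since $\int_0^\infty \min(1, c e^{-\alpha t})\,dt \le \alpha^{-1}(1 + \log^+ c)$, integrating and taking expectations gives
\[
\widetilde{E}\!\left[\max_{i \ne {\bfJ}_1}(X_i + W_i)^+\right] \le \alpha^{-1}\bigl(1 + \log B + E[D \log^+ D]\bigr) < \infty
\]
by Assumption~\ref{Ass1}(e), since $\widetilde{E}[\log^+ D] = E[D \log^+ D]$.

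For d.R.i.\ I would apply Proposition~4.1(ii) in Chapter~V of \cite{Asm2003}, which reduces the problem to bounding $\sum_n \sup_{x \in (nh, (n+1)h]} h_m(x)$ for some $h > 0$. For $n \ge 0$, the monotone bound $h_m(x) \le \widetilde{P}(Z_0 > x)$ combined with the just-established $\widetilde{E}[Z_0^+] < \infty$ gives $\sum_{n \ge 0}\widetilde{P}(Z_0 > nh) \le 1 + \widetilde{E}[Z_0^+]/h$. For $n < 0$, the inequality $e^{-\alpha(V_{m+1} - x)^+} \le e^{-\beta(V_{m+1}-x)^+} \le e^{\beta(x - V_{m+1})}$, valid for any $\beta \in (0, \alpha)$, yields $h_m(x) \le e^{\beta x}\widetilde{E}[e^{-\beta V_{m+1}}]$. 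Choosing $\beta = \alpha - \beta'$ with $\beta'$ from Assumption~\ref{Ass1}(b) gives $\widetilde{E}[e^{-\beta \hat X_1}] = E[\sum_i C_i^{\beta'}] < 1$, so $\widetilde{E}[e^{-\beta V_{m+1}}] < 1$ and the resulting geometric series over $n < 0$ converges. Continuity of $h_m$ at almost every $x$ follows since $Z_0$ and $V_{m+1}$ have at most countably many atoms.

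The hard part is showing $\widetilde{E}[Z_0^+] < \infty$: a naive union bound replacing the maximum by the sum $\sum_{i \ne {\bfJ}_1}(X_i + W_i)^+$ would demand moments like $E[D^2]$ or $E[DN]$, neither of which is implied by Assumption~\ref{Ass1}. Keeping the maximum intact and applying the union bound only at the probability level, so that the moment cost of integrating $\min(1, c e^{-\alpha t})$ becomes only logarithmic in $c$, reduces the requirement to $E[D \log^+ D] < \infty$, which is precisely what Assumption~\ref{Ass1}(e) provides.
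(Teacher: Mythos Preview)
Your argument is correct. The computation of the integral and the reduction of finiteness to $\widetilde{E}[Z_0^+]<\infty$ match the paper exactly, and your bound on $\widetilde{E}[Z_0^+]$ uses the same two ingredients (Corollary~\ref{L.unifbnd} and Assumption~\ref{Ass1}(e)) as the paper, only organized differently: you split $Z_0^+\le \xi_0^+ + \max_{i\ne{\bfJ}_1}(X_i+W_i)^+$ first and then integrate the conditional tail, whereas the paper keeps $Z_0$ intact and partitions the range of integration according to whether $e^{\alpha t}$ exceeds $Q^\alpha$ and $\sum_{i\ne{\bfJ}_1}C_i^\alpha$.

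The genuine difference is the d.R.i.\ step. The paper handles all $n$ at once by the uniform shift bound
\[
\sum_{n} \sup_{y\in(nh,(n+1)h]} h_m(y)\;\le\; \widetilde{E}\!\left[(Z_0-V_{m+1}+2h)^+ + \alpha^{-1}e^{-\alpha(V_{m+1}-2h-Z_0)^+}\right],
\]
which is finite by exactly the same calculation already done for the integral (with $V_{m+1}$ replaced by $V_{m+1}-2h$). Your route splits at $n=0$: for $n\ge 0$ you drop the exponential and use $\widetilde{E}[Z_0^+]<\infty$, while for $n<0$ you drop the indicator and invoke Assumption~\ref{Ass1}(b) to get $\widetilde{E}[e^{-\beta\hat X_1}]=E[\sum_i C_i^{\alpha-\beta}]<1$, yielding a geometric tail in $e^{\beta x}$. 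Both are valid; the paper's version is cleaner because it recycles the integral computation verbatim and does not need the extra moment condition~(b), whereas your version makes the two tails ($x\to+\infty$ and $x\to-\infty$) more explicit.
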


\begin{proof}
We start by showing that $h_{m}$ is integrable, for which we note that
\begin{align*}
\int_{-\infty}^\infty h_{m}(x) \, dx &= \widetilde{E}\left[ \int_{-\infty}^{Z_0} e^{-\alpha (V_{m+1} - x)^+} dx \right] = \widetilde{E}\left[  \int_{V_{m+1} -Z_0}^\infty e^{-\alpha y^+} dy \right]  \\
&= \widetilde{E}\left[  \int_{V_{m+1} -Z_0}^{(V_{m+1}-Z_0)^+}   dy + \int_{(V_{m+1}-Z_0)^+}^\infty  e^{-\alpha y} dy \right] \\
&= \widetilde{E}\left[  (Z_0 - V_{m+1})^+  +  \frac{e^{-\alpha (V_{m+1}-Z_0)^+} }{\alpha}  \right] .
\end{align*} 
To see that $\widetilde{E}\left[ (Z_0-V_{m+1})^+ \right]$ is finite first note that
\begin{align*}
 \widetilde{E} \left[  (Z_0 - V_{m+1})^+ \right] &\leq \widetilde{E}\left[  Z_0^+ + (-V_{m+1})^+ \right] \leq \widetilde{E}\left[ Z_0^+ \right]  + (m+1) \widetilde{E} \left[ (-V_1)^+ \right] .
\end{align*}
By Lemma~\ref{L.DistOnJ} we have $\widetilde{E}\left[ (-V_1)^+ \right] = \widetilde{E}\left[ \hat X_1^- \right] < \infty$. For $\widetilde{E}\left[ Z_0^+ \right]$, recall $Q = e^{\xi_0}$ and write   
\begin{align}
\widetilde{E}\left[ Z_0^+ \right] 
&= \int_0^\infty \widetilde{E}\left[ \widetilde{P}\left(\left. \xi_0 \vee \bigvee_{i \neq {\bfJ}_1} (\log C_i + W_i) > t \right| \boldsymbol{\psi}_\emptyset \right) \right] dt \notag \\
&= \int_0^\infty\widetilde{E}\left[  1(\xi_0 > t) + 1(\xi_0 \leq t) \widetilde P\left(\left. \bigvee_{i\ne {\bfJ}_1} 1(\log C_i + W_i > t) \right| \bm{\psi}_\emptyset \right) \right] dt \notag \\
&\leq  \int_0^\infty\widetilde{E}\left[  1(\xi_0 > t) + 1\left(Q^\alpha  \leq e^{\alpha t} < \sum_{i \neq {\bfJ}_1} C_i^\alpha  \right)   \right] dt \notag \\
&\hspace{5mm} + \int_0^\infty\widetilde{E}\left[   1\left( Q^\alpha \vee \sum_{i \neq {\bfJ}_1} C_i^\alpha \leq e^{\alpha t} \right) \sum_{i \neq {\bfJ}_1} \widetilde{P}\left(\left. \log C_i + W_i > t \right| C_i \right)  \right] dt \notag \\
&= \widetilde{E}\left[ \xi_0^+  + \left( \frac{1}{\alpha} \log \left( \sum_{i \neq {\bfJ}_1}  C_i^\alpha \right) - \xi^+_0 \right)^+   \right] \label{eq:MomentCondition} \\
&\hspace{5mm} + \int_0^\infty\widetilde{E}\left[   1\left(Q^\alpha \vee \sum_{i \neq {\bfJ}_1} C_i^\alpha \leq e^{\alpha t} \right) \sum_{i \neq {\bfJ}_1} \overline{F}(t-\log C_i) \right] dt, \label{eq:SubtreeMax}
\end{align}
where $\overline{F}(x) = P(W > x)$ and $\eqref{eq:MomentCondition}$ is equal to $\widetilde{E}\left[ \frac{1}{\alpha} \log^+\left( Q^\alpha \vee \sum_{i \neq {\bfJ}_1} C_i^\alpha \right) \right]$.  By Corollary~\ref{L.unifbnd}, there exists a constant $B < \infty$ such that $\overline{F}(x) \leq B e^{-\alpha x}$ for $x \geq 0$. It follows that \eqref{eq:SubtreeMax} is bounded from above by
\begin{align*}
&B \int_0^\infty\widetilde{E}\left[   1\left(Q^\alpha \vee \sum_{i \neq {\bfJ}_1} C_i^\alpha \leq e^{\alpha t} \right) \sum_{i \neq {\bfJ}_1} e^{-\alpha(t-\log C_i)}  \right] dt \\
&=B\widetilde{E}\left[  \sum_{i \neq {\bfJ}_1} C_i^\alpha \int_{\frac{1}{\alpha} \log^+\left(Q^\alpha \vee \sum_{i \neq {\bfJ}_1} C_i^\alpha\right) }^\infty   e^{-\alpha t} dt \right] \\
&= \frac{B}{\alpha}  \widetilde{E}\left[  \sum_{i \neq {\bfJ}_1} C_i^\alpha e^{-\log^+\left(Q^\alpha \vee \sum_{i \neq {\bfJ}_1} C_i^\alpha\right) } \right] \leq \frac{B}{\alpha}.
\end{align*}
It follows that
\begin{align*}
\widetilde{E}\left[ Z_0^+ \right] &\leq \widetilde{E}\left[  \frac{1}{\alpha} \log^+\left( Q^\alpha \vee \sum_{i\neq {\bfJ}_1} C_i^\alpha \right) \right] + \frac{B}{\alpha} \\
&\leq \frac{1}{\alpha} E\left[ \sum_{i=1}^N C_i^\alpha   \log^+ \left( Q^\alpha \vee \sum_{i=1}^N C_i^\alpha \right)  \right] + \frac{B}{\alpha}  < \infty, 
\end{align*}
with finiteness provided by Assumption~\ref{Ass1}(e). 

It remains to show that $h_{m}$ is d.R.i., for which we note that for any $h > 0$, 
\begin{align*}
&\sum_{n = -\infty}^\infty \sup_{y \in (nh,(n+1)h]} h_{m}(y) \\
&\leq \sum_{n=-\infty}^\infty  \widetilde{E}\left[ 1(Z_0 > nh) e^{-\alpha(V_{m+1} - (n+1)h)^+} \right]  \\
&\leq \sum_{n=-\infty}^\infty \int_{(n-1)h}^{nh} \widetilde{E}\left[ 1(Z_0 > x) e^{-\alpha(V_{m+1} - x - 2h)^+} \right] dx \\
&= \widetilde{E}\left[  (Z_0 - V_{m+1} + 2h)^+ + \frac{e^{-\alpha(V_{m+1} - 2h - Z_0)^+}}{\alpha} \right] < \infty,
\end{align*}
so by Proposition~4.1(ii) in Chapter V of \cite{Asm2003}, $h_{m}$ is d.R.i.
\hfill\end{proof}

We are now ready to state the expression to which we will apply the Markov renewal theorem.

\begin{lemma} \label{L.Krenewal}
For any $t \in \mathbb{R}$ and $m \geq 2$ we have
\begin{align*}
\widetilde{E}\left[ K\left( M_m^{(m)}, t \right)  \right]  &\leq \sum_{k=0}^\infty \widetilde{E}\left[ g\left(M_{m+k}^{(m)}, t-V_k\right) \right], \\
\widetilde{E}\left[ K\left( M_m^{(m)}, t \right)  \right]  &\geq  \sum_{k=0}^\infty \widetilde{E}\left[ g\left(M_{m+k}^{(m)}, t-V_k\right) \right] - B \sum_{k=0}^\infty \widetilde{E}\left[ h_{m}(t - V_k) \right],
\end{align*}
where $B < \infty$ is the constant from Corollary~\ref{L.unifbnd}, 
\begin{equation}
g\left(M_m^{(m)}, t \right) = 1\left( \max_{{\bfi} \prec {\bfJ}_{m-1}} S_{\bfi} + Y_{\bfi} \leq t < V_{m-1} + \xi_{m-1} \right) e^{-\alpha(V_{m-1} - t)} D_{{\bfJ}_{m-1}}^{-1}, \label{eq:gFunction} 
\end{equation}
and
$$h_{m}(x) = \widetilde{E}\left[ 1(Z_{0} > x)  e^{-\alpha (V_{m+1} - x)^+}  \right].$$
\end{lemma}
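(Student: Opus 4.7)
The strategy is to compare the indicator event $\{|\gamma(t)| \geq m-1,\, \gamma(t) = {\bfJ}_{\tau(t)}\}$ defining $K$ with the ``local'' indicator in $g$, which only sees the height-$m$ subtree $\mathcal{T}_k^{(m)}$. For the upper bound, on this event $\tau(t) = k + m - 1$ for a unique $k \geq 0$, and translating the definition of $g(M_{m+k}^{(m)}, t - V_k)$ back into full-tree coordinates (i.e., shifting positions by $V_k$) shows that its defining event $\max_{{\bfi} \in \mathcal{T}_k^{(m)},\, {\bfi} \prec {\bfJ}_{k+m-1}} (S_{\bfi} + Y_{\bfi}) \leq t < V_{k+m-1} + \xi_{k+m-1}$ is implied by the global first-crossing condition (which controls every node, hence in particular every node of the subtree). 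Since the weight $e^{-\alpha(V_{\tau(t)} - t)} D^{-1}_{{\bfJ}_{\tau(t)}}$ in $K$ matches exactly the one in $g$, and $g \geq 0$, we obtain $K \leq \sum_k g(M_{m+k}^{(m)}, t - V_k)$ pointwise, which yields the claimed upper bound upon taking expectations.

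For the lower bound, set $\Delta := \sum_k g(M_{m+k}^{(m)}, t - V_k) - K$. On any summand contributing to $\Delta$, we have $g(M_{m+k}^{(m)}, t - V_k) = 1$ but the global condition $\{\gamma(t) = {\bfJ}_{\tau(t)},\, \tau(t) = k+m-1\}$ fails. This forces the existence of a node ${\bfi} \notin \mathcal{T}_k^{(m)}$ with ${\bfi} \prec {\bfJ}_{k+m-1}$ and $S_{\bfi} + Y_{\bfi} > t$. Classifying ${\bfi}$ by where its ancestry departs from the spine, such a node is either a perturbation at some spine node ${\bfJ}_j$ with $j \leq k-1$, or a descendant of a sibling of ${\bfJ}_{j+1}$ for some $j \leq k-1$; in either case $V_j + Z_j > t$. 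A union bound over $j$ therefore gives
\[
    \Delta \leq \sum_{j=0}^\infty 1(V_j + Z_j > t) \sum_{k > j} g(M_{m+k}^{(m)}, t - V_k).
\]

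To finish, one bounds the inner tail sum in conditional expectation. Using $g \leq 1(V_{k+m-1} + \xi_{k+m-1} > t)\, e^{-\alpha(V_{k+m-1} - t)} D^{-1}_{{\bfJ}_{k+m-1}}$, the tilt-cancellation identity $\widetilde{E}[1(\xi_n > s) D^{-1}_{{\bfJ}_n} \mid \mathcal{G}_n] = P(Y > s)$ from Lemma~\ref{L.Overshoot}, and Corollary~\ref{L.unifbnd} (providing the uniform bound $e^{\alpha s} P(W > s) \leq B e^{-\alpha(-s)^+}$), a shifted version of the renewal argument of Lemma~\ref{L.Overshoot} yields
\[
    \widetilde{E}\Bigl[\sum_{k > j} g(M_{m+k}^{(m)}, t - V_k) \,\Big|\, \mathcal{G}_{j+m+1}\Bigr] \leq B\, e^{-\alpha(V_{j+m+1} - t)^+}.
\]
Because $1(V_j + Z_j > t)$ depends only on $\mathcal{G}_j$ together with the subtrees rooted at the siblings of ${\bfJ}_{j+1}$ --- which by Lemma~\ref{L.Newmeasure} carry the original $P$-distribution and are independent of the spine beyond ${\bfJ}_{j+1}$ --- we may pull the indicator out of the conditional expectation and then appeal to shift-invariance of the spine under $\widetilde{P}$ to identify $\widetilde{E}[1(V_j + Z_j > t) e^{-\alpha(V_{j+m+1} - t)^+} \mid \mathcal{G}_j] = h_m(t - V_j)$. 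Summing over $j$ produces $\widetilde{E}[\Delta] \leq B \sum_j \widetilde{E}[h_m(t - V_j)]$, which is the claimed lower bound.

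The main obstacle is the last step: establishing the tail bound on $\sum_{k > j} g$ with exactly the uniform constant $B$, while carefully handling the dependencies between $Z_j$ and the future of the spine. This requires precise use of the branching property under $\widetilde{P}$ so that subtrees off the spine are standard $P$-subtrees, the cancellation of $D^{-1}_{{\bfJ}_{k+m-1}}$ against the Radon--Nikodym derivative, and the shift-invariance that lets us recognize the resulting expression as $h_m$ evaluated at $t - V_j$.
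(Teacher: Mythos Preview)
Your upper bound is correct and actually cleaner than the paper's: you observe directly that the global event $\{\gamma(t)={\bfJ}_{\tau(t)},\ \tau(t)=k+m-1\}$ implies the local event in $g\bigl(M_{m+k}^{(m)},t-V_k\bigr)$, giving a pointwise inequality inside the expectation. The paper instead derives the same upper bound by an iterated one–step recursion.

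The lower bound, however, has a genuine gap. The step
\[
\widetilde{E}\Bigl[\sum_{k>j} g\bigl(M_{m+k}^{(m)},t-V_k\bigr)\ \Big|\ \mathcal{G}_{j+m+1}\Bigr]\ \leq\ B\, e^{-\alpha(V_{j+m+1}-t)^+}
\]
is not delivered by Lemma~\ref{L.Overshoot} and Corollary~\ref{L.unifbnd}. Those results bound $e^{\alpha s}P(W>s)=\widetilde{E}\bigl[1({\bfJ}_{\tau(s)}=\gamma(s))\,e^{-\alpha(V_{\tau(s)}-s)}D_{{\bfJ}_{\tau(s)}}^{-1}\bigr]$, which contains the \emph{first-crossing} indicator. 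Your tail sum $\sum_{k>j}g_k$ has dropped that indicator (this happened the moment you applied the union bound over $j$), so what you must control is a renewal-type sum of the form $\sum_{n}\widetilde{E}[u(s-V_n)]$, and Corollary~\ref{L.unifbnd} simply does not bound that object by $Be^{-\alpha(-s)^+}$; in particular, for $s<0$ the crude domination $u(x)\le e^{\alpha x}$ leads to $\sum_n\widetilde{E}[e^{-\alpha V_n}]$, which is typically infinite in the branching setting. There is also a mismatch in the conditioning: $g_{j+2},g_{j+3},\dots$ are not $\mathcal{G}_{j+m+1}$-measurable, and $Z_j$ and the ``future'' sum share dependence through $\boldsymbol{\psi}_{{\bfJ}_j}$, so the independence you invoke to factor and to recognize $h_m(t-V_j)$ needs more care than stated.

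The paper avoids this obstacle by arguing recursively rather than via the pointwise union bound. It writes
\[
K\bigl(M_m^{(m)},t\bigr)=g\bigl(M_m^{(m)},t\bigr)+\widetilde{E}\bigl[1(\Delta_{t,\tau(t)})\,\cdot\,(\text{tail})\mid M_m^{(m)}\bigr],
\]
where $\Delta_{t,\tau(t)}$ is the event that $\xi_0\le t$ and no subtree rooted at a sibling of ${\bfJ}_1$ reaches $t$ before the spine. The tail term, after regenerating at ${\bfJ}_1$, is exactly $K\bigl(M_{m+1}^{(m)},t-V_1\bigr)$. Iterating yields error terms of the form $\widetilde{E}\bigl[1(Z_k>t-V_k)\,K\bigl(M_{m+k+1}^{(m)},t-V_{k+1}\bigr)\bigr]$, and the crucial point is that $K$ itself, by restarting at the leaf-spine node of the subtree, is bounded by $e^{\alpha s}\overline{F}(s)$ with $s=t-V_{m+k+1}$; Corollary~\ref{L.unifbnd} then gives $K\le B\,e^{-\alpha(V_{m+k+1}-t)^+}$. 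This is exactly the uniform bound you need but cannot get for $\sum_{k>j}g_k$. In short: keep the first-crossing structure (i.e., keep $K$) inside the error terms rather than replacing it with a sum of $g$'s.
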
 

\begin{proof}
Start by  noting that
\begin{align*}
K\left(M_m^{(m)}, t\right) &= 1(|\gamma(t)| = m-1, \gamma(t) = {\bfJ}_{\tau(t)} ) e^{-\alpha(V_{m-1} - t)} D_{{\bfJ}_{m-1}}^{-1} \\
&\hspace{5mm} +  \widetilde{E}\left[  \left. 1(|\gamma(t)| \geq m, \gamma(t) = {\bfJ}_{\tau(t)} ) e^{-\alpha(V_{\tau(t)} - t)} D_{{\bfJ}_{\tau(t)}}^{-1} \right| M_m^{(m)} \right] \\
&= g\left( M_m^{(m)}, t \right) \\
&\hspace{5mm} + \widetilde{E}\left[ \left.  1\left( \Delta_{t,\tau(t)} \right)  1(|\gamma(t)| \geq m, \gamma(t) = {\bfJ}_{\tau(t)} ) e^{-\alpha(V_{\tau(t)} - t)} D_{{\bfJ}_{\tau(t)}}^{-1}  \right| M_m^{(m)} \right],
\end{align*}
where the $\Delta_{t,\tau(t)}$ is the event that $\xi_0 \leq t$ and no subtree rooted at any of the sibling nodes of ${\bfJ}_1$ reaches level $t$ before the spine does. Simply ignoring the indicator $1(\Delta_{t,\tau(t)} )$ and regenerating at node ${\bfJ}_1$ yields the inequality
\begin{align*}
\widetilde{E}\left[ K\left(M_m^{(m)}, t\right) \right] &\leq \widetilde{E}\left[ g\left( M_m^{(m)}, t \right) \right] + \widetilde{E}\left[ K\left(M_{m+1}^{(m)}, t-V_1\right) \right] \\
&\leq \sum_{k=0}^{n-1} \widetilde{E}\left[ g\left( M_m^{(m)}, t-V_k \right) \right] + \widetilde{E}\left[ K\left(M_{m+n}^{(m)}, t-V_n\right) \right] .
\end{align*}
To further bound the last expectation, let $s \in \mathbb{R}$, and note that on $\{\gamma(s) = {\bfJ}_{\tau(s)}\}$, no random walk on a path other than the chosen one reaches level $s$ before $V_k + \xi_k$. Hence by ignoring these branches up to level $m$ and restarting the branching process at ${\bfJ}_m$ with initial value $V_m$, we have that 
\begin{align*}
K\left(M_m^{(m)}, s\right) &= \widetilde{E}\left[  \left. 1(|\gamma(s)| \geq m-1, \gamma(s) = {\bfJ}_{\tau(s)} ) e^{-\alpha(V_{\tau(s)} - s)} D_{{\bfJ}_{\tau(s)}}^{-1} \right| M_m^{(m)} \right] \\
&\leq \widetilde E\left[\left.  1(\gamma(s - V_m) = {\bfJ}_{\tau(s-V_m)}) e^{-\alpha (V_{\tau(s - V_m)} - (s - V_m))} \right| V_m \right] \\
&=  e^{\alpha(s-V_m)} \overline{F}(s-V_m) ,
\end{align*}
where $\overline{F}(x) = P(W > x)$ and we used \eqref{eq:ISEstimator}. Moreover, by Corollary~\ref{L.unifbnd} we have that $e^{\alpha(s-V_m)} \overline{F}(s-V_m ) \leq B e^{-\alpha (V_m-s)^+}$, so we obtain for any $s \in \mathbb{R}$,
\begin{equation} \label{eq:K(M,s)bound}
K\left(M_m^{(m)}, s\right) \leq  B e^{-\alpha (V_m-s)^+}.
\end{equation}
Now replace $V_{m}$ with $V_{m+n} - V_n $ and $s = t-V_n$ in \eqref{eq:K(M,s)bound} to obtain that
$$\widetilde{E}\left[ K\left(M_{m+n}^{(m)}, t-V_n\right) \right] \leq B \widetilde E\left[e^{-\alpha(V_{m+n} - t)^+}\right]. $$

To obtain a lower bound note that $\Delta_{t,\tau(t)}^c \subseteq \{ Z_0 > t\}$, since the event $\{Z_0 > t\}$ states that either $\xi_0 > t$ or at least one of the subtrees rooted at a sibling node of ${\bfJ}_1$ reaches level $t$ at some point (even if this happens after the spine does). Hence, we obtain the following lower bound: 
\begin{align*}
&\widetilde{E}\left[ K\left(M_m^{(m)}, t\right) \right] \\
&\geq \widetilde{E}\left[ g\left( M_m^{(m)}, t \right) \right] + \widetilde{E}\left[ K\left(M_{m+1}^{(m)}, t-V_1\right) \right] - \widetilde{E}\left[ 1(Z_0 > t) K\left(M_{m+1}^{(m)}, t-V_1\right) \right] \\
&\geq  \widetilde{E}\left[ g\left(M_m^{(m)}, t\right)  \right] + \widetilde{E}\left[ g\left(M_{m+1}^{(m)}, t- V_1\right)  \right] + \widetilde{E}\left[  K\left(M_{m+2}^{(m)}, t-V_2 \right) \right]   \\
&\hspace{5mm}  -  \widetilde{E}\left[ 1(Z_{1} > t-V_1) K\left(M_{m+2}^{(m)}, t-V_2 \right) \right]  - \widetilde{E}\left[ 1(Z_{0} > t) K\left(M_{m+1}^{(m)}, t-V_1 \right) \right]   \\
&\geq \sum_{k=0}^{n-1} \widetilde{E}\left[ g\left(M_{m+k}^{(m)}, t- V_k \right)  \right] + \widetilde{E}\left[ K\left(M_{m+n}^{(m)}, t- V_n\right)  \right]   \\
&\hspace{5mm} - \sum_{k=0}^{n-1} \widetilde{E}\left[ 1(Z_{k} > t - V_k)  K\left(M_{m+k+1}^{(m)}, t- V_{k+1}\right)  \right].
\end{align*}
To provide a bound for the last sum, note that by replacing $V_m$ with $V_{m+k+1}- V_{k+1}$ and $s = t-V_{k+1}$ in \eqref{eq:K(M,s)bound} we obtain
\begin{align*}
\widetilde{E}\left[ 1(Z_{k} > t - V_k)  K\left(M_{m+k+1}^{(m)}, t- V_{k+1}\right)  \right] &\leq \widetilde{E}\left[ 1(Z_{k} > t - V_k)  B e^{-\alpha(V_{m+k+1} - t)^+}  \right] \\
&= B \widetilde{E}\left[ h_m(t-V_k) \right],
\end{align*}
where $h_m(x) =  \widetilde{E}\left[ 1(Z_0 > x) e^{-\alpha(V_{m+1}-x)^+} \right]$.

We have thus shown that for any $n \geq 1$
\begin{align*}
- B \sum_{k=0}^{n-1} \widetilde{E} \left[ h_m(t-V_k) \right] &\leq \widetilde{E}\left[ K\left(M_m^{(m)}, t\right) \right] - \sum_{k=0}^{n-1} \widetilde{E}\left[ g\left(M_{m+k}^{(m)}, t- V_k \right)  \right]  \\
 &\leq B \widetilde{E}\left[ e^{-\alpha(V_{m+n}-t)^+}  \right] .
\end{align*}
Monotone convergence and the observation that $V_n \to \infty$ $\widetilde{P}$-a.s.~immediately yields
$$\widetilde{E}\left[ K\left(M_m^{(m)}, t\right) \right]  \leq \sum_{k=0}^{\infty} \widetilde{E}\left[ g\left(M_{m+k}^{(m)}, t- V_k \right)  \right].$$
To obtain the lower bound note that by Lemma~\ref{L.h_m} we have that $h_{m}$ is nonnegative and d.R.i.~on $\mathbb{R}$, and therefore, $\sum_{k=0}^\infty \widetilde{E} \left[ h_{m}(t-V_k) \right] < \infty$ for all $t \in \mathbb{R}$. It follows that
$$\widetilde{E}\left[ K\left(M_m^{(m)}, t\right) \right] \geq \sum_{k=0}^{\infty} \widetilde{E}\left[ g\left(M_{m+k}^{(m)}, t- V_k \right)  \right] - B \sum_{k=0}^\infty \widetilde{E} \left[ h_{m}(t-V_k) \right].$$
This completes the proof.
\hfill\end{proof}

%%%%%% 

In order to connect our framework with the notation in the Markov renewal theorem from \cite{Alsmeyer_94}, recall that the
$$\hat X_n = V_n - V_{n-1}, \qquad n \geq 1,$$
define the increments of the random walk along the spine, and note that $\{(M_{m+n}^{(m)}, \hat X_{n}) : n \ge 0 \}$ is a time-homogeneous Markov process that only depends on the past through $\{ M_{m+n}^{(m)}: n \geq 0\}$. Hence, we can define a transition kernel $\mathbf{P}$ according to
\[
	 \mathbf{P}\left(M^{(m)}_{m+n}, \, A\times B\right) := \widetilde{P}\left(M^{(m)}_{m+n+1} \in A, \, \hat X_{n+1} \in B \Big| M^{(m)}_{m+n}, \hat X_{n}\right)
\]
for any measurable sets $A \subseteq \mathcal{S}^{(m)}$ and $B \subseteq \mathbb{R}$. Thus, $\{(M^{(m)}_{m+n}, V_{n}) : n\ge0\}$ is a Markov random walk in the sense of \cite{Alsmeyer_94}. Furthermore, by the way the process $\{M^{(m)}_{m+n}: n\ge0\}$ was constructed, it is $m$th-order stationary, in the sense that for each $n \ge 0$ the law under $\widetilde{P}$ of $\left( M_{m+n}^{(m)}, \ldots, M_{2m+n}^{(m)}  \right)$ is the same as that of $ \left(  M_{m}^{(m)}, \ldots, M_{2m}^{(m)} \right)$,
from which it follows that the unique stationary distribution for the chain $\{M^{(m)}_{m+n}: n \geq 0\}$ is given by
\begin{equation}
	\eta_m(\cdot) := \widetilde{E}\left[ \frac{1}{m} \sum_{n=0}^{m-1} 1\left( M^{(m)}_{m+n} \in \cdot \right) \right] =  \widetilde{P} \left(M^{(m)}_m \in \cdot \right).
\end{equation}

The idea is now to use Theorem 2.1 in \cite{Alsmeyer_94}, which states that provided that
\begin{enumerate}[(i)]

\item $\{(M^{(m)}_{m+n}, V_{n}) : n\ge0\}$ is a non-arithmetic and Harris recurrent Markov random walk, and

\item $g : \mathcal{S}^{(m)}\times \mathbb{R} \to \mathbb{R}$ is a measurable function such that $g(M, \cdot)$ is Lebesgue-a.e.~continuous for $\eta_m$-a.e.~$M$, and $g$ is d.R.i. in the sense that
\[
	\int_{\mathcal{S}^{(m)}} \sum_{n=-\infty}^\infty \sup_{y \in (n, n+1]} |g(M,y)|\,\eta_m (dM) < \infty,
\]

\end{enumerate}
then, it will follow that
\[
	\lim_{t\to\infty} \widetilde E\left[ \sum_{n=0}^\infty g\left(M_{n+m}^{(m)}, t - V_{n}\right) \right] = \frac{1}{\widetilde E[\hat X_1]} \int_{\mathcal{S}^{(m)}} \int_{\mathbb{R}} g(M, x)\,dx\, \eta_m(dM).
\]

The non-arithmeticity of $\{(M_{m+n}^{(m)}, V_{n}): n \geq 0\}$ follows from the non-arithmeticity of $\{V_n: n \geq 1\}$, which is ensured by Assumption~\ref{Ass1}(d) (see Lemma~\ref{L.DistOnJ}). To see that $\{M^{(m)}_{m+n} : n\ge0\}$ is Harris recurrent note that by construction, $M_{m+n}^{(m)}$ is independent of $\{ M_{n+2m}^{(m)}, M_{n+2m+1}^{(m)}, \dots\}$ for all $n \geq 0$, and therefore, by letting $\mathbf{Q}$ denote the transition kernel of $\{M_{m+n}^{(m)}: n \geq 0\}$ and $\mathbf{Q}^r$ its corresponding $r$-step transition kernel, we have
$$\mathbf{Q}^m( M_{m+n}^{(m)}, A) := \widetilde{P}\left( \left. M_{n+2m}^{(m)} \in A  \right| M_{m+n}^{(m)} \right) = \widetilde{P}\left( M_m^{(m)} \in  A \right) = \eta_m(A),$$
which satisfies the definition of a Harris chain (see Chapter VII \S3 in \cite{Asm2003}) with regeneration set $R = \mathcal{S}^{(m)}$, probability measure $\lambda = \eta_m$ and $\epsilon = 1$.

The last ingredient before applying the Markov renewal theorem of \cite{Alsmeyer_94} to our situation is to show that the function $g$ defined by \eqref{eq:gFunction} satisfies the necessary conditions. The corresponding result is given by the following lemma. 

\begin{lemma} \label{L.gProperties}
Let $g:\mathcal{S}^{(m)} \times \mathbb{R} \to \mathbb{R}$ be defined by \eqref{eq:gFunction}. Then, under Assumption~\ref{Ass1}, $g(M,\cdot)$ is Lebesgue-a.e.~continuous for $\eta_m$-a.e.~$M$, and $g$ is d.R.i. Moreover, 
\begin{align*}
\int_{\mathcal{S}^{(m)}}  \int_{-\infty}^\infty g(M,x) \, dx \, \eta_m(dM) &=   \frac{1}{\alpha} \widetilde{E}\left[   D_{{\bfJ}_{m-1}}^{-1} \left( e^{\alpha \xi_{m-1}} - e^{\alpha\left( \max_{{\bfi}  \prec {\bfJ}_{m-1}} (S_{\bfi} + Y_{\bfi}) - V_{m-1} \right)} \right)^+  \right] \\
&< \infty.
\end{align*}
\end{lemma}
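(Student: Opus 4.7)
The plan is to verify the three required properties of $g$ in order: first almost-everywhere continuity, then the explicit integral evaluation, and finally direct Riemann integrability.

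For fixed $M \in \mathcal{S}^{(m)}$, the quantities $M_* := \max_{{\bfi} \prec {\bfJ}_{m-1}} (S_{\bfi} + Y_{\bfi})$, $U := V_{m-1} + \xi_{m-1}$, $V_{m-1}$, and $D_{{\bfJ}_{m-1}}$ are all determined by $M$. Thus
\[
g(M, y) = e^{-\alpha(V_{m-1} - y)} D_{{\bfJ}_{m-1}}^{-1}\, 1(M_* \le y < U)\, 1(M_* < U),
\]
which is continuous in $y$ on $\mathbb{R} \setminus \{M_*, U\}$, so $g(M, \cdot)$ is Lebesgue-a.e.~continuous for every $M$.

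Next, for fixed $M$ with $M_* < U$, a direct computation gives
\[
\int_{-\infty}^{\infty} g(M, y)\, dy = D_{{\bfJ}_{m-1}}^{-1} \int_{M_*}^{U} e^{-\alpha(V_{m-1}-y)}\, dy = \frac{D_{{\bfJ}_{m-1}}^{-1}}{\alpha}\left( e^{\alpha \xi_{m-1}} - e^{\alpha(M_* - V_{m-1})} \right),
\]
and the left-hand side is zero when $M_* \ge U$. Taking $(\cdot)^+$ covers both cases, so integrating against $\eta_m = \widetilde{P}(M_m^{(m)} \in \cdot)$ yields
\[
\int_{\mathcal{S}^{(m)}} \int_{-\infty}^{\infty} g(M, x)\, dx\, \eta_m(dM) = \frac{1}{\alpha}\, \widetilde{E}\!\left[ D_{{\bfJ}_{m-1}}^{-1}\left( e^{\alpha \xi_{m-1}} - e^{\alpha(M_* - V_{m-1})}\right)^+ \right].
\]

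For d.R.i., I bound $g(M, y) \le D_{{\bfJ}_{m-1}}^{-1} e^{-\alpha(V_{m-1}-y)} 1(y \le U)$, which is nondecreasing in $y$ on $(-\infty, U]$ and vanishes beyond $U$. Hence
\[
\sum_{n=-\infty}^\infty \sup_{y \in (n, n+1]} g(M, y) \le D_{{\bfJ}_{m-1}}^{-1} e^{\alpha \xi_{m-1}} + D_{{\bfJ}_{m-1}}^{-1} \sum_{k \le \lfloor U \rfloor} e^{\alpha(k - V_{m-1})} \le C\, D_{{\bfJ}_{m-1}}^{-1} e^{\alpha \xi_{m-1}},
\]
where $C = 1 + e^{\alpha}/(e^\alpha - 1)$ is a finite constant (the geometric sum terminates at $\lfloor U \rfloor \le U = V_{m-1} + \xi_{m-1}$). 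Integrating against $\eta_m$ and using the tilting identity $\widetilde{E}[f(\bm{\psi}_{{\bfJ}_{m-1}})] = E[f(\bm{\psi})\, D]$ from Lemma~\ref{L.Newmeasure} gives
\[
\int_{\mathcal{S}^{(m)}} \sum_n \sup_{y \in (n, n+1]} g(M, y)\, \eta_m(dM) \le C\, \widetilde{E}[D_{{\bfJ}_{m-1}}^{-1} e^{\alpha \xi_{m-1}}] = C\, E[e^{\alpha Y} \cdot D^{-1} \cdot D] = C\, E[Q^\alpha],
\]
which is finite by Assumption~\ref{Ass1}(b). The same bound with $C$ replaced by $1/\alpha$ shows the integral formula is finite.

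The only subtle step is the d.R.i.~bound: the factor $D_{{\bfJ}_{m-1}}^{-1}$ is unbounded whenever $D$ can be small, and on its own has no finite $\widetilde{E}$-moment. The key observation that makes the argument go through is that the tilted measure carries a built-in factor of $D$, which exactly cancels the $D^{-1}$ and reduces the bound to $E[Q^\alpha]$; Assumption~\ref{Ass1}(c) is needed only to ensure $D^{-1}$ is a.s.~finite so the expression is well defined.
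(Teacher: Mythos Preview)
Your proof is correct and follows essentially the same approach as the paper. The only cosmetic difference is in the d.R.i.\ bound: the paper keeps the lower indicator and uses a shift-by-2 trick to dominate the sum of suprema by $\tfrac{1}{\alpha}D_{{\bfJ}_{m-1}}^{-1}\bigl(e^{\alpha(\xi_{m-1}+2)}-e^{\alpha(\max_{\bfi\prec{\bfJ}_{m-1}}(S_{\bfi}+Y_{\bfi})-V_{m-1})}\bigr)^+$, whereas you drop the lower indicator and sum the resulting geometric series directly; both routes land on a constant times $\widetilde E[D_{{\bfJ}_{m-1}}^{-1}e^{\alpha\xi_{m-1}}]=E[Q^\alpha]$.
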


\begin{proof} 
We start by showing that $g(M,\cdot)$ is Lebesgue-a.e.~continuous. To see this, let $M \in \mathcal{S}^{(m)}$, identify its $m$ generations, all its weights, and its spine. Then note that 
$$g\left( M, t\right) = 1( a(M) \leq t \leq b(M) ) c(M) e^{\alpha t}$$
for some fixed numbers $a(M), b(M)$ and $c(M)$. Therefore, it is Lebesgue-a.e.~continuous. 

It remains to show that $g$ is d.R.i., for which we note that for any $M \in \mathcal{S}^{(m)}$ for which we have identified its generations, weights, and spine, we have
\begin{align*}
&\sum_{n=-\infty}^\infty \sup_{y \in (n, n+1]} |g(M,y)| \\
	&= \sum_{n=-\infty}^\infty \sup_{y \in (n, n+1]}  1 \left( \max_{ {\bfi} \prec {\bfJ}_{m-1} }  S_{\bfi} + Y_{\bfi} \leq y  < V_{m-1} + \xi_{m-1} \right) e^{-\alpha(V_{m-1}-y)} D_{{\bfJ}_{m-1}}^{-1}  \\
	&\leq D_{{\bfJ}_{m-1}}^{-1} e^{-\alpha V_{m-1}} \sum_{n=-\infty}^\infty  1\left( V_{m-1} + \xi_{m-1} > n, \max_{{\bfi} \prec {\bfJ}_{m-1}}   S_{\bfi} + Y_{\bfi} \leq n+1 \right)   e^{\alpha(n+1)}   \\
	&\leq D_{{\bfJ}_{m-1}}^{-1} e^{-\alpha V_{m-1}}  \sum_{n=-\infty}^\infty \int_{n+1}^{n+2}   1\left( V_{m-1} + \xi_{m-1} +2> x,  \max_{{\bfi} \prec {\bfJ}_{m-1}}   S_{\bfi} + Y_{\bfi} \leq x \right)    e^{\alpha x}  \, dx \\
	&=  D_{{\bfJ}_{m-1}}^{-1}  e^{-\alpha V_{m-1}}\int_{-\infty}^\infty 1 \left( \max_{ {\bfi} \prec {\bfJ}_{m-1} }  S_{\bfi} + Y_{\bfi} \leq x  < V_{m-1} + \xi_{m-1} +2 \right) e^{\alpha x}   \, dx \\
	&=   D_{{\bfJ}_{m-1}}^{-1}  e^{-\alpha V_{m-1}} \frac{1}{\alpha} \left( e^{\alpha(V_{m-1}+\xi_{m-1} + 2)} - e^{\alpha \left( \max_{ {\bfi} \prec {\bfJ}_{m-1} }  S_{\bfi} + Y_{\bfi} \right) }    \right)^+ \\
	&= \frac{1}{\alpha} D_{{\bfJ}_{m-1}}^{-1}   \left( e^{\alpha(\xi_{m-1} + 2)} - e^{\alpha \left( \max_{ {\bfi} \prec {\bfJ}_{m-1} }  S_{\bfi} + Y_{\bfi} - V_{m-1} \right) }    \right)^+.
\end{align*}
It follows that
\begin{align*}
&\int_{\mathcal{S}^{(m)}} \sum_{n=-\infty}^\infty \sup_{y \in (n, n+1]} |g(M,y)|  \, \eta_m(dM) \\
&\leq  \frac{1}{\alpha} \widetilde{E}\left[   D_{{\bfJ}_{m-1}}^{-1} \left( e^{\alpha ( \xi_{m-1} + 2)} - e^{\alpha\left( \max_{{\bfi}  \prec {\bfJ}_{m-1}} (S_{\bfi} + Y_{\bfi}) - V_{m-1} \right)} \right)^+  \right] \\
&\leq \frac{1}{\alpha} \widetilde{E}\left[   D_{{\bfJ}_{m-1}}^{-1} e^{\alpha ( \xi_{m-1} + 2)} \right] = \frac{e^{2\alpha}}{\alpha} E[ Q^\alpha] < \infty, 
\end{align*}
which implies that $g$ is d.R.i. 

To complete the proof, note that essentially the same steps followed above give that
$$\int_{\mathcal{S}^{(m)}}  \int_{-\infty}^\infty g(M,x) \, dx \, \eta_m(dM) =   \frac{1}{\alpha} \widetilde{E}\left[   D_{{\bfJ}_{m-1}}^{-1} \left( e^{\alpha \xi_{m-1}} - e^{\alpha\left( \max_{{\bfi}  \prec {\bfJ}_{m-1}} (S_{\bfi} + Y_{\bfi}) - V_{m-1} \right)} \right)^+  \right] $$
and that the right hand side is finite. 
\hfill\end{proof}

We are finally ready to prove Theorem~\ref{T.Main}.

\begin{proof}[Proof of Theorem~\ref{T.Main}]
From the derivations at the beginning of the subsection and Lemma~\ref{L.Krenewal} we have that for any $m \geq 2$, 
\begin{align*}
 \widetilde{E} \left[ 1(\gamma(t) = {\bfJ}_{\tau(t)}) e^{-\alpha(V_{\tau(t)} - t)} D_{{\bfJ}_{\tau(t)}}^{-1} \right] &\leq   \sum_{k=0}^{m-1} \widetilde{E} \left[ 1(|\gamma(t)| = k, \gamma(t) = {\bfJ}_{\tau(t)}) e^{-\alpha(V_k - t)} D_{{\bfJ}_k}^{-1} \right] \\
& \hspace{5mm} + \sum_{n=0}^\infty  \widetilde{E}\left[ g\left(M_{m+n}^{(m)}, t - V_n\right)  \right] .
 \end{align*}
  To see that each of the first $m$ expectations converges to zero as $t \to \infty$, note that
 \begin{align*}
  \widetilde{E} \left[ 1(|\gamma(t)| = k, \gamma(t) = {\bfJ}_{\tau(t)}) e^{-\alpha(V_k - t)} D_{{\bfJ}_k}^{-1} \right] &\leq  \widetilde{E} \left[ 1(V_k + \xi_k > t) e^{-\alpha(V_k - t)} D_{{\bfJ}_k}^{-1} \right] \\
  &=  \widetilde{E} \left[ u(t-V_k) \right] ,
 \end{align*}
 where $u(x) = e^{\alpha x} \widetilde{E}\left[ 1(\xi_0 > x) D_{{\bfJ}_0}^{-1}  \right] = e^{\alpha x} P(Y > x)$. Since $u$ is bounded and integrable on $(-\infty, \infty)$, it follows from the bounded convergence theorem that
 \begin{align*}
 \limsup_{t \to \infty}  \widetilde{E} \left[ 1(|\gamma(t)| = k, \gamma(t) = {\bfJ}_{\tau(t)}) e^{-\alpha(V_k - t)} D_{{\bfJ}_k}^{-1} \right] &\leq  \widetilde{E} \left[  \limsup_{t \to \infty} u(t-V_k) \right] = 0. 
 \end{align*}
 Now use the Markov renewal theorem (Theorem~2.1 in \cite{Alsmeyer_94}) and Lemma~\ref{L.gProperties} to obtain that 
 \begin{align*}
& \limsup_{t \to \infty}  \widetilde{E} \left[ 1(\gamma(t) = {\bfJ}_{\tau(t)}) e^{-\alpha(V_{\tau(t)} - t)} D_{{\bfJ}_{\tau(t)}}^{-1} \right] \\
&\leq \sum_{k=0}^{m-1} \lim_{t \to \infty} \widetilde{E} \left[ 1(|\gamma(t)| = k, \gamma(t) = {\bfJ}_{\tau(t)}) e^{-\alpha(V_k - t)} D_{{\bfJ}_k}^{-1} \right] \\
&\quad + \frac{1}{ \widetilde{E}[\hat X_1]}\int_{\mathcal{S}^{(m)}}  \int_{-\infty}^\infty g(M,x) \, dx \, \eta_m(dM) \\
 &= \frac{1}{\alpha \mu} \widetilde{E}\left[   D_{{\bfJ}_{m-1}}^{-1} \left( e^{\alpha \xi_{m-1}} - e^{\alpha\left( \max_{{\bfi}  \prec {\bfJ}_{m-1}} (S_{\bfi} + Y_{\bfi}) - V_{m-1} \right)} \right)^+  \right] =: H_{m-1} ,
 \end{align*}
where $\mu = \widetilde{E}[\hat X_1] = E\left[ \sum_{i=1}^N C_i^\alpha \log C_i \right] > 0$. 

To obtain a lower bound use Lemma~\ref{L.Krenewal} again to obtain that
$$ \widetilde{E} \left[ 1(\gamma(t) = {\bfJ}_{\tau(t)}) e^{-\alpha(V_{\tau(t)} - t)} D_{{\bfJ}_{\tau(t)}}^{-1} \right]  \geq \sum_{n=0}^\infty  \widetilde{E}\left[ g\left(M_{m+n}^{(m)}, t - V_n\right)  \right]  -  B \sum_{n=0}^\infty \widetilde{E}\left[ h_{m}(t-V_n) \right] .$$
Now use the two-sided renewal theorem (see Theorem~4.2 in \cite{Ath_McD_Ney_78}) and Lemma~\ref{L.h_m} to obtain that
\begin{align*}
\lim_{t \to \infty} \sum_{n=0}^\infty \widetilde{E}\left[ h_{m}(t-V_n) \right] &= \frac{1}{\mu} \int_{-\infty}^\infty h_{m}(x) \, dx \\
&= \frac{1}{\mu} \widetilde{E}\left[ \frac{e^{-\alpha(V_{m+1}-Z_0)^+}}{\alpha} + (Z_0 - V_{m+1})^+ \right]  < \infty.
\end{align*}
It follows that
\begin{align*}
\liminf_{t \to \infty}  \widetilde{E} \left[ 1(\gamma(t) = {\bfJ}_{\tau(t)}) e^{-\alpha(V_{\tau(t)} - t)} D_{{\bfJ}_{\tau(t)}}^{-1} \right]  &\geq H_{m-1} \\
&\quad -  \frac{B}{\mu} \widetilde{E}\left[ \frac{e^{-\alpha(V_{m+1}-Z_0)^+}}{\alpha} + (Z_0 - V_{m+1})^+ \right].
\end{align*}
Since $V_m \to \infty$ $\widetilde{P}$-a.s., we have that
$$\lim_{m \to \infty} \widetilde{E}\left[ \frac{e^{-\alpha(V_{m+1}-Z_0)^+}}{\alpha} + (Z_0 - V_{m+1})^+ \right] = 0, $$
and we conclude that
$$\lim_{t \to \infty} e^{\alpha t} P(W > t)  = \lim_{t \to \infty}  \widetilde{E} \left[ 1(\gamma(t) = {\bfJ}_{\tau(t)}) e^{-\alpha(V_{\tau(t)} - t)} D_{{\bfJ}_{\tau(t)}}^{-1} \right] = \lim_{m \to \infty} H_m =: H.$$
The positivity of $H$ under Assumption~\ref{Ass2} follows from representation \eqref{eq:EquivH} and Theorem~3.4 in \cite{Jel_Olv_15}. 
\hfill\end{proof}

\subsection{The importance sampling estimator $Z(t)$}

The last part of the paper contains the proofs of Lemmas \ref{L.Tovermu} and \ref{L.BddRelError} in Section~\ref{S.Simulation}. The first of these establishes the asymptotic behavior of $\tau(t)$ as $t \to \infty$, and the second one proves the strong efficiency of our proposed estimator.

\begin{proof}[Proof of Lemma~\ref{L.Tovermu}] Since $\tau(y)$ is monotone nondecreasing in $y$, $\lim_{y\to \infty} \tau(t) = \sup_y \tau(y)$ exists, and for any $k > 0$ and $x \in \mathbb{R}$, 
\[
	\widetilde P\left(\lim_{y\to\infty} \tau(y) > k \right) \ge \widetilde P(\tau(x) > k ) = \widetilde P\left(\max_{j \le k} V_j + \xi_j \le x\right). 
\]
Letting $x \to\infty$, we see that $\widetilde P(\lim_{y\to\infty} \tau(y) > k) = 1$, and since this is true for all $k$, $\lim_{y\to\infty} \tau(y) = \infty$ $\widetilde P$-a.s. It remains to show that $\tau(t)/t \to 1/\mu$ $\widetilde{P}$-a.s.~provided $\widetilde{E}\left[ \xi_0^+ \right] < \infty$ and $\widetilde{P}(\xi_0 > -\infty) > 0$, which are implied by Assumption~\ref{Ass1}.

Start by noting that for any $t > 0$, 
$$\frac{1}{\tau(t)+1} \max_{0 \leq k < \tau(t)} (V_k + \xi_k) \leq \frac{t}{\tau(t)+1} \leq \frac{V_{\tau(t)}+\xi_{\tau(t)} }{\tau(t)+1}.$$
To obtain an upper bound for $t/(\tau(t)+1)$ note that 
$$\limsup_{t \to \infty} \frac{t}{\tau(t)+1}  \leq \limsup_{n \to \infty} \frac{V_n + \xi_n^+}{n} \leq \mu + \limsup_{n \to \infty} \frac{\xi_n^+}{n} = \mu,$$
where $\limsup_{n \to \infty} \xi_n^+/n = 0$ $\widetilde{P}$-a.s.~since $\widetilde{E}[\xi_0^+] < \infty$ and $\limsup_{n \to \infty} V_n/n = \mu$ $\widetilde{P}$-a.s.~by the strong law of large numbers since $\widetilde{E}[|V_1|] < \infty$. To obtain a lower bound let $m_n = n/\log n$ and note that since $t/(\tau(t)+1) > 0$ we have
\begin{align*}
\liminf_{t \to \infty} \frac{t}{\tau(t)+1} &\geq \liminf_{n \to \infty} \left( \frac{1}{n} \max_{0 \leq k < n} (V_k + \xi_k)  \right)^+  \geq \liminf_{n \to \infty}  \frac{1}{n} \max_{m_n \leq k < n} (V_k + \xi_k)^+ \\
&\geq \liminf_{n \to \infty}  \frac{1}{n} \max_{m_n \leq k < n} \left( (\mu k + \xi_k)^+ - (\mu k - V_k)^+ \right) \\
&\geq \liminf_{n \to \infty} \frac{1}{n} \max_{m_n \leq k < n} (\mu k + \xi_k)^+ - \limsup_{n \to \infty} \frac{1}{n} \max_{m_n \leq k < n} (\mu k-V_k)^+ \\
&\geq \liminf_{n \to \infty} \frac{1}{n} \max_{m_n \leq k < n} (\mu k + \xi_k)^+ - \limsup_{n \to \infty}  \left(\mu - \frac{V_n}{n} \right)^+.
\end{align*}
Since the strong law of large numbers gives that $\limsup_{n \to \infty} (\mu - V_n/n)^+ = 0$ $\widetilde{P}$-a.s., it only remains to show that $\liminf_{n \to \infty} n^{-1} \max_{m_n \leq k < n} (\mu k+\xi_k)^+ \geq \mu$ $\widetilde{P}$-a.s. To show that this is indeed the case, fix $0 < \epsilon < \mu$, define $M_\epsilon = \lceil e^{(1-\epsilon/2)^{-1}} \rceil$, and note that
\begin{align*}
&\sum_{n = 3}^\infty \widetilde{P}\left( \frac{1}{n} \max_{m_n \leq k < n} (\mu k+ \xi_k)^+ - \mu < -\epsilon \right) \\
&= \sum_{n = 3}^\infty \widetilde{P}\left(  \max_{m_n \leq k < n} (\mu k+ \xi_k)^+ < (\mu -\epsilon)n \right) \\
&\leq M_\epsilon + \sum_{n=M_\epsilon +1}^\infty \widetilde{P}\left( \max_{ \lceil (1-\epsilon/2)n \rceil \leq k < n} (\mu k+\xi_k)^+ < (\mu-\epsilon)n \right) \\
&\leq M_\epsilon + \sum_{n= M_\epsilon +1}^\infty  \prod_{k= \lceil (1-\epsilon/2)n \rceil}^n \widetilde{P}\left( \mu k + \xi_k < (\mu-\epsilon) n \right)  \\
&\leq M_\epsilon + \sum_{n= M_\epsilon +1}^\infty  \prod_{k= \lceil (1-\epsilon/2)n \rceil}^n \widetilde{P}\left(  \xi_0 < -(\epsilon/2) n  \right) \\
&\leq  M_\epsilon + \sum_{n= M_\epsilon +1}^\infty  \widetilde{P}(2\xi_0 < -\epsilon n)^{(\epsilon/2)n -1} .
\end{align*}
Since by assumption we have that $\widetilde{P}(\xi_0 > -\infty) > 0$, then there exists $n_0 > M_\epsilon$ such that $\widetilde{P}(2\xi_0 < -\epsilon n) < 1$ for all $n \geq n_0$, which shows that the series above converges. Finally, use the Borel-Cantelli lemma to conclude that
$$\liminf_{n \to \infty} \frac{1}{n} \max_{m_n \leq k < n} (\mu k+\xi_k)^+ - \mu = 0 \qquad \widetilde{P}\text{-a.s.}, $$
which in turn implies that
$$\lim_{t \to \infty} \frac{t}{\tau(t)} = \mu \qquad \widetilde{P}\text{-a.s.}$$
\hfill\end{proof}

\begin{proof}[Proof of Lemma~\ref{L.BddRelError}] From Theorem~\ref{T.Main}, 
\[
	P(W > t)^2 \sim H^2 e^{-2\alpha t} \quad\mbox{as}\quad t\to\infty
\]
for $H^2 > 0$, and so
\begin{align*}
	\limsup_{t\to\infty} \frac{\widetilde\var(Z(t))}{P(W > t)^2} &\le \limsup_{t\to\infty} \frac{\widetilde{E}\left[1({\bfJ}_{\tau(t)} = \gamma(t)) e^{-2\alpha V_{\tau(t)}}D^{-2}_{{\bfJ}_{\tau(t)}}\right]}{H^2 e^{-2\alpha t} }\\
	&\le H^{-2} \limsup_{t \to \infty} \widetilde E\left[ e^{-2\alpha (V_{\tau(t)}-t)}D^{-2}_{{\bfJ}_{\tau(t)}} \right].
\end{align*}
Now, by an argument analogous to that in the proof of Lemma~\ref{L.Overshoot}, we have that 
\begin{align*}
	 \widetilde E\left[ e^{-2\alpha(V_{\tau(t)} - t)}D^{-2}_{{\bfJ}_{\tau(t)}} \right] 
	&\le \sum_{n=0}^\infty \widetilde E\left[ v(t - V_n) \right], 
\end{align*}
where $v(x) = e^{2\alpha x} E[1(Y > x) D^{-1}]$, which is integrable since 
\begin{align*}
	\int_{-\infty}^\infty v(x)\,dx &= \int_{-\infty}^\infty e^{2\alpha x} E\left[1(Y > x) D^{-1}\right]\,dx \\
	&= E\left[ \int_{-\infty}^Y e^{2\alpha x}D^{-1}\,dx \right] = \frac{1}{2\alpha} E\left[Q^{2\alpha}D^{-1}\right] < \infty. 
\end{align*}
Then, $v$ is d.R.i. by the same argument as in the proof of Lemma~\ref{L.Overshoot} for the function $u$, and hence 
\[
	\limsup_{t\to\infty} \widetilde E\left[ e^{-2\alpha(V_{\tau(t)} - t)} D^{-2}_{{\bfJ}_{\tau(t)}} \right] \le \limsup_{t\to\infty} \sum_{n=0}^\infty \widetilde E\left[ v(t - V_n) \right] = \int_{-\infty}^\infty v(x)\,dx < \infty. 
\]
The proof is the same for the estimator $1({\bfJ}_{\tau(t)} = \gamma(t)) e^{-\alpha V_{\tau(t)}}$ in the case of independent $Q$.
\hfill\end{proof}

\subsection{The bounded perturbations, non-branching case}

We end the paper with a short proof of Theorem~1 in \cite{Araman} for the non-branching case $N \equiv 1$ and bounded $Q$, which establishes the exponential asymptotic behavior of $P(W > x)$. As mentioned earlier, a similar approach could be used for the branching case with bounded $Q$, however, since our goal was not to establish the exponential asymptotic itself (for which the implicit renewal theorem on trees in \cite{Jel_Olv_15} can be used), but rather shed some light into the event leading to the constant, we do not pursue this idea any further. 
In the $N \equiv 1$ case, 
$W$ is the maximum of
a negative drift perturbed random walk, that is
\[W\stackrel{\mathcal{D}}{=} \sup_{n\ge 0} \left(V_n+\xi_n\right) .\]

\begin{theo}[Theorem 1 in \cite{Araman}] \label{T.BoundedQNonBranching}
Suppose that $N \equiv 1$ and one of the following holds:
\begin{enumerate}[(a)]
\item $\{(\xi_{i-1}, \hat X_i): i \geq 1\}$ are i.i.d., or,
\item $\{\xi_i: i \geq 0\}$ is a stationary sequence, independent of the i.i.d.~sequence $\{ \hat X_i: i \geq 1\}$.
\end{enumerate}
In either case, assume that $P(\xi_0 \leq c) = 1$ for some constant $c$ and $P(\xi_0 > 0) > 0$; we allow the possibility that $P(\xi_0 = -\infty) > 0$ but assume that $P( \hat X_1 = -\infty) = 0$. Assume further that $E[e^{\alpha \hat X_1}] = 1$ and $E[\hat X_1 e^{\alpha \hat X_1}] \in (0, \infty)$ for some $\alpha > 0$, and that the measure 
$P(\hat X_1 \in dx)$ is non-arithmetic. 
Then, 
$$P(W > x) \sim H e^{-\alpha x}, \qquad x \to \infty,$$
for some constant $0 < H < \infty$.  
\end{theo}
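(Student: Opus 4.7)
The strategy is to mimic the proof of Theorem~\ref{T.Main} in the much simpler non-branching setting, exploiting the fact that when $N \equiv 1$ the ``branching'' collapses to a single random walk (the spine is the whole tree) and the boundedness $\xi_0 \leq c$ makes all integrability and direct Riemann integrability checks trivial. With $D_{{\bfJ}_r} = e^{\alpha \hat X_{r+1}}$ and $L_n = e^{\alpha V_n}$, I would work with the filtration $\mathcal{G}_n = \sigma(\xi_0, \hat X_1, \xi_1, \hat X_2, \ldots, \xi_{n-1}, \hat X_n)$ in case~(a), and $\mathcal{G}_n = \sigma(\{\xi_i\}_{i \geq 0}, \hat X_1, \ldots, \hat X_n)$ in case~(b). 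In either case $\tau(t)+1$ is a $\{\mathcal{G}_n\}$-stopping time, and the change-of-measure derivation that produced~\eqref{eq:NonBranching} yields
\[
	e^{\alpha t} P(W > t) = \widetilde E\left[1(\tau(t) < \infty)\, e^{-\alpha(V_{\tau(t)+1} - t)}\right].
\]

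Under $\widetilde P$ the driving data has a clean structure: in case~(a) the pairs $\{(\xi_{i-1}, \hat X_i)\}_{i\ge1}$ remain i.i.d.~with joint law proportional to $e^{\alpha x}$ times the original; in case~(b) the sequence $\{\xi_i\}$ retains its original stationary law and is independent of the tilted i.i.d.~sequence $\{\hat X_i\}$. In both cases $\widetilde E[\hat X_1] = E[\hat X_1 e^{\alpha \hat X_1}] = \mu \in (0, \infty)$, the non-arithmeticity of the tilted increment law is inherited from that of $\hat X_1$, and $\widetilde P(\tau(t) < \infty) = 1$ by the argument of Lemma~\ref{L.nuFinite} (which uses $P(\xi_0 > 0) > 0$). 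The bound $\xi \leq c$ then forces $V_{\tau(t)} > t - c$ and hence
\[
	e^{-\alpha(V_{\tau(t)+1} - t)} \leq e^{\alpha c}\, e^{-\alpha \hat X_{\tau(t)+1}},
\]
which, combined with $\widetilde E[e^{-\alpha \hat X_1}] = 1$, provides the uniform integrability needed later.

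To extract the limit I would apply the Markov renewal theorem of~\cite{Alsmeyer_94} to the Markov random walk $\{(\xi_n, V_n): n \geq 0\}$, with $\xi_n$ as the Harris chain state (coupled to the next increment $\hat X_{n+1}$) and $V_n$ as the additive component; in case~(b) the chain is independent of $V_n$ and the ordinary two-sided renewal theorem suffices after conditioning on $\{\xi_i\}$. Following the pattern of Lemmas~\ref{L.Krenewal}--\ref{L.gProperties}, one sandwiches $\widetilde E[e^{-\alpha(V_{\tau(t)+1} - t)}]$ between expressions of the form $\sum_{n \geq 0} \widetilde E[g(\xi_n, t - V_n)]$ where the integrand has compact $y$-support by virtue of $\xi \leq c$, making direct Riemann integrability immediate. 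This yields $\lim_{t\to\infty} e^{\alpha t} P(W > t) = H$ for an explicit finite $H$; positivity $H > 0$ follows from $P(\xi_0 > 0) > 0$, which forces the limiting overshoot distribution to be nondegenerate. The main obstacle, specific to case~(a), is the coupling between $\xi_n$ and $\hat X_{n+1}$ under $\widetilde P$; this is handled cleanly by taking $\xi_n$ alone as the Markov state and exploiting the i.i.d.~structure of the pairs. This is precisely the simplification that, in the bounded-$Q$ setting, allows one to bypass the augmented-filtration arguments and implicit renewal theory needed for the general proof.
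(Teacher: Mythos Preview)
Your plan is workable in principle but takes a much longer route than the paper's and misses the shortcut that bounded perturbations afford (cf.\ Remark~\ref{R.MainRemarks}(c)). The paper does not analyze $\tau(t)$ via any renewal theorem. Instead it introduces the first-passage time $T(y) = \inf\{n \geq 1: V_n > y\}$ of the \emph{unperturbed} walk, a stopping time for the small filtration $\mathcal{H}_n = \sigma(\hat X_1, \ldots, \hat X_n)$; the bound $\xi \leq c$ gives $\tau(x) \geq T(x-c)$. The key step is a regeneration: conditionally on $\mathcal{H}_{T(y)}$ (with $y = x-c$), the post-$T(y)$ supremum $\max\{\xi_{T(y)}, \sup_{k\geq 1}(V_{T(y)+k} + \xi_{T(y)+k})\}$ is independent of $\mathcal{H}_{T(y)}$ and distributed as $W$ (this holds in both cases (a) and (b)), yielding the closed identity $P(W > x) = E[1(T(y) < \infty)\,\overline{F}(x - V_{T(y)})]$. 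Tilting at the $\mathcal{H}_n$-stopping time $T(y)$ then gives $e^{\alpha x} P(W > x) = e^{\alpha c}\,\widetilde{E}[\overline{F}(c - B(x-c))\, e^{-\alpha B(x-c)}]$ with $B(t) = V_{T(t)} - t$ the ordinary overshoot, and classical overshoot convergence plus bounded convergence finishes the proof; positivity is immediate from $\overline{F}(c - B(\infty)) \geq \overline{F}(c) > 0$. Your route instead ports the machinery of Lemmas~\ref{L.Krenewal}--\ref{L.gProperties}, which is heavier and also underspecified here: since $\{\tau(t) = n\}$ depends on the whole past, a single-coordinate state $\xi_n$ does not directly produce a sum $\sum_n \widetilde E[g(\xi_n, t-V_n)]$ without the same $m$-step sandwiching, and your positivity sketch (``nondegenerate overshoot'') would still need to be made concrete. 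What your approach would buy is uniformity with the general branching argument; what the paper's buys is a short proof using only the classical overshoot theorem, with no augmented filtration and no Markov renewal theory.
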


\begin{proof}
Define the filtration $\mathcal{H}_n = \sigma( \hat X_i: 1 \leq i \leq n)$ for $n \geq 0$ and $\mathcal{H}_0 = \sigma(\varnothing)$. Let $T(x) = \inf\{ n \geq 1: V_n > x\}$ and note that it is a stopping time with respect to $\{ \mathcal{H}_n: n \geq 0\}$. Also let $\tau(x) = \inf\{n \ge 1 : V_n + \xi_n > x\}$. Since the perturbations are bounded, we have $\tau(x) \geq T(x-c)$, and since the drift of $V_n = \hat X_1 + \dots + \hat X_n$ is positive under $\widetilde{P}$, then $\widetilde{P}(T(t) < \infty) = 1$ for all $t \geq 0$. Now let $y = x-c$ and write,
\begin{align*}
& P(W > x) \\
&=  P( T(y) \leq \tau(x) < \infty ) \\
&= P\left( T(y) < \infty, \,   \sup_{k \geq 0}\, \left(  V_{T(y)-1} + V_{T(y)+k} + \xi_{T(y)+k}\right) > x \right) \\
&= E\left[ 1(T(y) < \infty) E\left[ \left. 1\left(V_{T(y)} + \max\left\{ \xi_{T(y)}, \, \sup_{k \geq 1} \left( V_{T(y)+k} + \xi_{T(y)+k} \right) \right\} > x \right) \right| \mathcal{H}_{T(y)} \right] \right]. 
\end{align*}
Since $ \max\left\{ \xi_{T(y)}, \sup_{k \geq 1} ( V_{T(y)+k} + \xi_{T(y)+k} ) \right\}$ is independent of $\mathcal{H}_{T(y)}$ and has the same distribution as $W$, we have that
$$E\left[ \left. 1\left(V_{T(y)} + \max\left\{ \xi_{T(y)}, \, \sup_{k \geq 1} \left( V_{T(y)+k} + \xi_{T(y)+k} \right) \right\} > x \right) \right| \mathcal{H}_{T(y)} \right] = \overline{F}(x-V_{T(y)}),$$
where $\overline{F}(t) = P(W > t)$. Hence,
\begin{align*}
e^{\alpha x} P(W> x) &= e^{\alpha x} E\left[ 1(T(y) < \infty) \overline{F}(x - V_{T(y)}) \right] \\
&= e^{\alpha x} E\left[ 1(T(y) < \infty) \overline{F}(x - V_{T(y)}) e^{-\alpha V_{T(y)}} L_{T(y)} \right] \\
&=  e^{\alpha x} \widetilde{E}\left[ 1(T(y) < \infty)  \overline{F}(x - V_{T(y)}) e^{-\alpha V_{T(y)}} \right]  \\
&= \widetilde{E}\left[ \overline{F}(x - V_{T(y)}) e^{-\alpha (V_{T(y)} - x)} \right] \\
&= \widetilde{E}\left[ \overline{F}(c - B(x-c)) e^{-\alpha B(x-c) } \right] e^{\alpha c},
\end{align*}
where $B(t) = V_{T(t)} - t \geq 0$
is the overshoot process of the random walk $\{V_n: n \geq 1\}$. 
Since $P(\hat X_1 \in dx)$ is non-arithmetic, so is $\widetilde P(\hat X_1 \in dx) = E[1(\hat X_1 \in dx) e^{\alpha \hat X_1}]$, and hence 
by Theorem 2.1 in Chapter VIII of \cite{Asm2003}, $B(t)$ converges in $\widetilde{P}$-distribution as $t \to \infty$ to an a.s. finite limit $B(\infty)$, and therefore, 
$$\lim_{x \to \infty} \widetilde{E}\left[ \overline{F}(c - B(x-c)) e^{-\alpha B(x-c) } \right] e^{\alpha c}  = \widetilde{E}\left[ \overline{F}(c - B(\infty)) e^{-\alpha B(\infty) } \right]  =: H.$$
To see that $H > 0$, note that $\overline{F}(t) \geq \sup_{n \geq 0} P(V_n + \xi_n > t, \xi_n > 0) \geq \sup_{n \geq 0} P(V_n > t) P(\xi_0 > 0)$. The condition $E[\hat X_1 e^{\alpha \hat X_1}] > 0$ implies that $P(\hat X_1 > 0) > 0$, and in particular there must be some $\epsilon > 0$ such that $P(\hat X_1 > \epsilon) > 0$. For any $t > 0$, if $k > t/\epsilon$, then 
\[
	P(V_k > t) \ge P \left( \hat X_1 > \frac{t}{k}, \ldots, \hat X_k > \frac{t}{k} \right) \ge P(\hat X_1 > \epsilon)^k > 0. 
\]
Since $P(\xi_0 > 0) > 0$ as well, we the have $\overline{F}(t) > 0$ for each $t>0$. In particular, since $B(\infty) \ge 0$ a.s., $\overline{F}(c - B(\infty)) \ge \overline{F}(c) > 0$. Therefore, $B(\infty) < \infty$ a.s. and $\overline{F}(c-B(\infty)) > 0$ a.s.
imply together that $H > 0$. 
\end{proof}

\bibliographystyle{plain}
%\bibliography{WBP_Renewal_bib}

\begin{thebibliography}{10}

\bibitem{Aldo_Band_05}
D.J. Aldous and A.~Bandyopadhyay.
\newblock A survey of max-type recursive distributional equation.
\newblock {\em Ann. Appl. Probab.}, 15(2):1047--1110, 2005.

\bibitem{Alsmeyer_94}
G.~Alsmeyer.
\newblock On the markov renewal theorem.
\newblock {\em Stochastic Process. Appl.}, 50:37--56, 1994.

\bibitem{Araman}
V.F. Araman and P.~Glynn.
\newblock Tail asymptotics for the maximum of perturbed random walk.
\newblock {\em Ann. Appl. Probab.}, 16:1411--1431, 2006.

\bibitem{Asm2003}
S.~Asmussen.
\newblock {\em Applied Probability and Queues}.
\newblock Springer, New York, 2003.

\bibitem{asmussen2007stochastic}
S.~Asmussen and P.W. Glynn.
\newblock {\em Stochastic Simulation: Algorithms and Analysis}.
\newblock Stochastic Modelling and Applied Probability. Springer New York,
  2007.

\bibitem{Ath_McD_Ney_78}
K.B. Athreya, D.~McDonald, and P.~Ney.
\newblock Limit theorems for semi-{M}arkov processes and renewal theory for
  {M}arkov chains.
\newblock {\em Ann. Probab.}, 6(5):788--797, 1978.

\bibitem{Biggins_77}
J.D. Biggins.
\newblock Martingale convergence in the branching random walk.
\newblock {\em J. Appl. Prob.}, 14(1):25--37, 1977.

\bibitem{Biggins_98}
J.D. Biggins.
\newblock Lindley-type equations in the branching random walk.
\newblock {\em Stochastic Process. Appl.}, 75:105--133, 1998.

\bibitem{Jose2}
J.~Blanchet and H.~Lam.
\newblock State-dependent importance sampling for rare event simulation: An
  overview and recent advances.
\newblock {\em Surveys in Operations Research and Management Sciences},
  17:38--59, 2012.

\bibitem{Jose1}
J.~Blanchet, H.~Lam, and B.~Zwart.
\newblock Efficient rare event simulation for perpetuities.
\newblock {\em Stochastic Process. Appl.}, 122:3361--3392, 2012.

\bibitem{Bucklew}
J.~Bucklew.
\newblock {\em Introduction to Rare-event Simulation}.
\newblock Springer-Verlag, New York, 2004.

\bibitem{Chen_Lit_Olv_17}
N.~Chen, N.~Litvak, and M.~Olvera-Cravioto.
\newblock Generalized {P}age{R}ank on directed configuration networks.
\newblock {\em Random Struct. Algor.}, 51(2):237--274, 2017.

\bibitem{cohen-SSQ}
J.W. Cohen.
\newblock {\em The Single Server Queue}.
\newblock North-Holland Publishing Co., Amsterdam, 1982.

\bibitem{Collamore3}
J.F. Collamore.
\newblock Importance sampling techniques for the multidimensional ruin problem
  for general markov additive sequences of random vectors.
\newblock {\em Ann. Appl. Probab.}, 12:382--421, 2002.

\bibitem{Collamore1}
J.F. Collamore, G.~Diao, and A.N. Vidyashankar.
\newblock Rare event simulation for processes generated via stochastic fixed
  point equations.
\newblock {\em Ann. Appl. Probab.}, 24:2143--2175, 2014.

\bibitem{Collamore2}
J.F. Collamore and A.N. Vidyashankar.
\newblock Tail estimates for stochastic fixed point equations via nonlinear
  renewal theory.
\newblock {\em Stochastic Process. Appl.}, 123:3378--3429, 2013.

\bibitem{Darek}
E.~Damek D.~Buraczewski and T.~Mikosch.
\newblock {\em Stochastic Models with Power-Law Tails: The Equation $X = AX +
  B$.}
\newblock Springer, 2016.

\bibitem{Enriquez}
N.~Enriquez, C.~Sabot, and O.~Zindy.
\newblock A probabilistic representation of constants in {K}esten's renewal
  theorem.
\newblock {\em Probab. Theory Related Fields}, 144:581--613, 2009.

\bibitem{GlynnIglehart}
P.~Glynn and D.~Iglehart.
\newblock Importance sampling for stochastic simulations.
\newblock {\em IEEE Trans. Automat. Control}, 35:1367--1392, 1989.

\bibitem{Goldie_91}
C.M. Goldie.
\newblock Implicit renewal theory and tails of solutions of random equations.
\newblock {\em Ann. Appl. Probab.}, 1(1):126--166, 1991.

\bibitem{Grey}
D.R. Grey.
\newblock Regular variation in the tail behaviour of solutions of random
  difference equations.
\newblock {\em Ann. Appl. Probab.}, 4:169--183, 1994.

\bibitem{Guiv}
Y.~Guivarc'h and \'E.~Le Page.
\newblock Spectral gap properties for linear random walks and {P}areto's
  asymptotics for affine stochastic recursions.
\newblock {\em Ann. Inst. Henri Poincar\'{e} Probab. Stat.}, 52:503--574, 2016.

\bibitem{Iksanov}
A.~Iksanov.
\newblock {\em Renewal Theory for Perturbed Random Walks and Similar
  Processes}.
\newblock Springer, 2016.

\bibitem{Iksanov_04}
A.M. Iksanov.
\newblock Elementary fixed points of the {BRW} smoothing transforms with
  infinite number of summands.
\newblock {\em Stochastic Process. Appl.}, 114:27--50, 2004.

\bibitem{Jel_Olv_12a}
P.R. Jelenkovi\'c and M.~Olvera-Cravioto.
\newblock Implicit renewal theory and power tails on trees.
\newblock {\em Adv. Appl. Prob.}, 44(2):528--561, 2012.

\bibitem{Jel_Olv_12b}
P.R. Jelenkovi\'c and M.~Olvera-Cravioto.
\newblock Implicit renewal theory for trees with general weights.
\newblock {\em Stochastic Process. Appl.}, 122(9):3209--3238, 2012.

\bibitem{Jel_Olv_13}
P.R. Jelenkovi\'c and M.~Olvera-Cravioto.
\newblock Convergence rates in the implicit renewal theorem on trees.
\newblock {\em J. Appl. Probab.}, 50(4):1077--1088, 2013.

\bibitem{Jel_Olv_15}
P.R. Jelenkovi\'c and M.~Olvera-Cravioto.
\newblock Maximums on trees.
\newblock {\em Stochastic Process. Appl.}, 125:217--232, 2015.

\bibitem{Kar_Kel_Suh_94}
F.I. Karpelevich, M.Ya. Kelbert, and Yu.M. Suhov.
\newblock Higher-order {L}indley equations.
\newblock {\em Stochastic Process. Appl.}, 53:65--96, 1994.

\bibitem{Kyprianou_00}
A.E. Kyprianou.
\newblock Martingale convergence and the stopped branching random walk.
\newblock {\em Probab. Theory Related Fields}, 116:405--419, 2000.

\bibitem{Liu_98}
Q.~Liu.
\newblock Fixed points of a generalized smoothing transformation and
  applications to the branching random walk.
\newblock {\em Adv. Appl. Prob.}, 30:85--112, 1998.

\bibitem{Liu_00}
Q.~Liu.
\newblock On generalized multiplicative cascades.
\newblock {\em Stochastic Process. Appl.}, 86:263--286, 2000.

\bibitem{Mezard_Montanari_2009}
M.~Mezard and A.~Montanari.
\newblock {\em Information, Physics, and Computation}.
\newblock Oxford University Press, 2009.

\bibitem{Olv_Rui_19}
M.~Olvera-Cravioto and O.~Ruiz-Lacedelli.
\newblock Stationary waiting time in parallel queues with synchronization.
\newblock {\em Math. Oper. Res.}, to appear, 2020.

\bibitem{Olv_19}
Mariana Olvera-Cravioto et~al.
\newblock Convergence of the population dynamics algorithm in the {W}asserstein
  metric.
\newblock {\em Electron. J. Probab.}, 24(61):1--27, 2019.

\bibitem{Rosler_93}
U.~R\"{o}sler.
\newblock The weighted branching process.
\newblock In {\em Dynamics of Complex and Irregular Systems (Bielefeld, 1991)},
  Bielefeld Encount. Math. Phys. VIII, pages 154--165. World Sci. Publishing,
  River Edge, NJ, 1993.

\bibitem{Siegmund}
D.~Siegmund.
\newblock Importance sampling in the {M}onte {C}arlo study of sequential tests.
\newblock {\em Ann. Stat.}, 3(1):673--684, 1976.

\end{thebibliography}

\end{document}